\documentclass[12pt]{amsart}
\usepackage{amsthm}
\usepackage{mathtools,amssymb,latexsym,graphics,enumerate}
\usepackage[mathscr]{eucal}
\usepackage{amsmath,amsfonts,amsthm,amssymb}
\usepackage{color}
\usepackage{xcolor}
\usepackage{hyperref}
\usepackage{caption}
\usepackage{subcaption}
\usepackage{cleveref}
\usepackage[left=1in,right=1in,top=1in,bottom=1in]{geometry}

\usepackage{graphicx}
\numberwithin{equation}{section}

\newcommand{\rr}{\mathbb{R}}
\newcommand{\E}{\mathbb{E}}
\newcommand{\lan}{\langle}
\newcommand{\ran}{\rangle}
\newcommand{\be}{\begin{eqnarray*}}
\newcommand{\bel}{\begin{eqnarray}}
\newcommand{\ee}{\end{eqnarray*}}
\newcommand{\eel}{\end{eqnarray}}
\newcommand{\ba}{\begin{aligned}}
\newcommand{\ea}{\end{aligned}}
\newcommand{\de}{\Delta}

\newcommand{\na}{\nabla}

\newcommand{\pa}{\partial}
\newcommand{\wh}{\widehat}

\newcommand{\LL}{{\mathbb{L}}}
\newcommand{\cL}{{\mathcal{L}}}
\newcommand{\nb}{\nonumber}
\newcommand{\TT}{{\mathcal{T}}}

\newtheorem{theorem}{Theorem}

\newtheorem{lem}{Lemma}
\newtheorem{pro}{Proposition}
\newtheorem{remark}{Remark}

\newcommand{\norm}[1]{\left\lVert#1\right\rVert}

\newcommand\Torus{{\mathbb T}}
\newcommand\Real{{\mathbb R}}
\newcommand{\Xt}{\mathbf{X}_t}
\newcommand{\Bt}{\mathbf{B}_t}

%\mathtoolsset{showonlyrefs=true}

\title[Random search in fluid flow aided by chemotaxis]{Random search in fluid flow aided by chemotaxis}
\author{Yishu Gong} \thanks{yishu.gong@duke.edu, Department of Mathematics, Duke University}
\author{Siming He} \thanks{simhe@math.duke.edu, Department of Mathematics, Duke University}
\author{Alexander Kiselev}\thanks{kiselev@math.duke.edu, Department of Mathematics, Duke University}
\date{\today}

\begin{document}
\begin{abstract}

In this paper, we consider the dynamics of a 2D target-searching agent performing Brownian motion under the influence of fluid shear flow and chemical attraction.
The analysis is motivated by numerous situations in biology where these effects are present, such as broadcast spawning of marine animals and other
reproduction processes or workings of the immune systems. We rigorously characterize the limit of the expected hit time in the large flow amplitude limit
as corresponding to the effective one-dimensional problem. We also perform numerical computations to characterize the finer properties of the expected duration of the search.
The numerical experiments show many interesting features of the process, and in particular existence of the optimal value of the shear flow that
minimizes the expected target hit time and outperforms the large flow limit.
\end{abstract}

%\pagecolor{brown}
%\color{white}
\maketitle
%\tableofcontents

\section{Introduction}

In this paper, we analyze the agent randomly searching for a target in the ambient shear flow, aided by chemotaxis on the chemical released
by the target. This process occurs in multiple settings in biology. One example is reproduction for many species, where eggs
secrete chemicals that attracts sperm and help improve fertilization rates. This is especially
well studied for marine life such as corals, sea urchins, mollusks, etc (see \cite{HRZZ,Riffelletall04,RiffellZimmer07,ZimmerRiffell11} for
further references), but the role of chemotaxis in fertilization extends to a great number of
species, including humans \cite{Raltetal}. Another process where chemotaxis plays an important
role is mammal immune systems fighting bacterial infections. Inflamed tissues release special
proteins, called chemokines, that serve to chemically attract monocytes, blood killer cells, to
the source of infection \cite{Desh,Taub}. Chemotaxis can also be involved when things go awry, for
instance, playing a role in tumor growth \cite{VanCoil}. One can also envision future applications to medical mini-robots
tasked with finding some sort of targets.
These processes take place in fluids, and on the
length scales where the ambient fluid motion can be effectively regarded as shear flow.

As a mathematical model, we consider the following stochastic differential equation (SDE) subject to initial condition $\mathbf{X}_{0}=(x_0,y_0)$ on the torus $\mathbb{T}^2=[0,L)^2:$
\begin{subequations}\label{SDE_full}
\begin{align}
\left(\begin{array}{rr}dX_t\\dY_t\end{array}\right)=&\left(\begin{array}{cc} {A} u(Y_t)\\0\end{array}\right)dt+\left(\begin{array}{c}V^{(1)}\\V^{(2)}\end{array}\right)dt+\sqrt{ 2\nu }\left(\begin{array}{rr} dB_t^{(1)}\\ dB_t^{(2)}\end{array}\right);\label{EQ:SDE}\\
V=&(V^{(1)},V^{(2)})=\varphi( {\chi} |\nabla c|)\frac{\na c}{|\nabla c|},\quad-\de c+ {A}u(y)\pa_x c=n -c.\label{EQ:chemical}
\end{align}%\end{align}\begin{align}
\end{subequations}
The terms on the right hand side of \eqref{EQ:SDE} model advection, chemotaxis, and random motion respectively.
The function $c$ is the concentration of the chemical, and $n$ is the density of the target, that will be located in a small area at the center of the torus.
%The form of the equation determining $c$ from $n$ in \eqref{EQ:chemical} assumes that the chemical diffusive time scale is much faster than
%other relevant processes, leading to the steady state distribution satisfying elliptic equation.
For chemotaxis, we choose a variant of flux-limited models that have been studied in, for example, \cite{CKWW,HP,HPS,PVW}.
and which is more realistic than the classical Keller-Segel form in that it places a speed limit on the agent.
%The function $\varphi$ is bounded and will be assumed
%to be close to piecewise linear, allowing its parametrization with essentially two parameters: the maximum speed and chemical sensitivity.

The SDE \eqref{EQ:SDE} models a single agent's searching process subject to ambient fluid advection, random Brownian motion, and chemical attraction. The searching is successful if the agent reaches the region occupied by the target population $n$.
The agent, positioned at point $\Xt=(X_t, Y_t)$, is transported by the ambient  shear flow $({A}u(Y_t),0)$ with magnitude ${A}\in \rr_+$.
Meanwhile, the agent also moves randomly, which is captured by the Brownian motion $\sqrt{\nu}d\Bt=\sqrt{\nu}(dB_t^{(1)}, dB_t^{(2)})$ with diffusivity $\nu$.
Finally, the agent aggregates towards the higher concentration of chemoattractant density $c$, secreted by the target population $n$.
The parameter ${\chi}$ denotes the chemical sensitivity, and the smooth cut-off function $\varphi\in C^2(\rr_+)$ enforces finite speed of aggregation.
In fact, we will take the function $\varphi$ to be close to piecewise linear, allowing its parametrization with essentially two parameters: the maximum speed and chemical sensitivity.
We consider the regime where the chemoattractant density $c$ reaches equilibrium at a much faster time scale than other relevant time scales of the problem, thus
 the density $c$ satisfies the elliptic equation \eqref{EQ:chemical}.    %Throughout the paper, the diffusivity $\nu$ and the chemical sensitivity $\chi$ are $\mathcal{O}(1)$ constants. %Poisson

\ifx\footnote{\textcolor{red}{To get $\nu\de$ in 2-dimension, the Brownian motion should be $ \sqrt{{2}\nu}(dB_t^1,dB_t^2)^T$. Check the paper %https://arxiv.org/pdf/2008.11710.pdf
}}
\fi

Our main goal in this paper is to gain insight into the interaction of the three transport mechanisms present in the model and their cumulative effect on the expected length of the search.
There are many works dedicated to interaction of advection and diffusion - see e.g. \cite{FreidlinWentzell} and \cite{IyerNovikovRyzhikZlatos10}; the latter source specifically looks at diffusion exit times and
contains further references. However we are not aware of any detailed mathematical - rigorous or numerical - analysis of the problem when chemotaxis is added in the mix.
Our work has been largely inspired by biological experiments on broadcast spawning of abalone conducted by Riffell and Zimmer.
Marine animals such as abalones, corals, and shrimp release their egg and sperm cells into the ambient ocean. The gametes are positively buoyant and rise to the surface, where fertilization happens.
The eggs are not mobile but release attractive pheromone. The sperms aggregate towards the eggs by a combination of random motion and chemotaxis-guided transport. Since the processes occurs in a fluid flow that is effectively shear on length scales involved, it is of biological interest to study the relation of fertilization success rate, chemotaxis, and shear flow speed. In the papers \cite{Riffelletall04}, \cite{RiffellZimmer07}, \cite{ZimmerRiffell11}, Zimmer, Riffell and their research group put well-mixed abalone sperms and eggs in a Taylor-Couette tank and study the quantitative relationships. The positive effect of chemotaxis has been clearly established; as far as the shear flow, the researchers observed that its effect is two-fold. If the shear rate is moderately slow, the shear enhanced fertilization.
On the other hand, if the shear rate is faster than a certain threshold, the fertilization rate starts declining.
We notice that the sperms are evenly distributed in the seawater in the biological experiment, and the experimental time is limited ($\approx 15$ seconds).
Hence only the group of sperms surrounding the eggs have access to the egg zone. As a result, the microflow environment play a dominant role during the fertilization process under this setup.
Our model addresses a related but different situation where there is a single searcher and, instead of the fertilization success rate (percentage of fertilized eggs), it monitors expected search time.
We consider this set up since we would like to represent the problem in the most fundamental form, and to understand the role of different forces affecting the search on this fundamental level.
Nevertheless, in our numerical computations, we mostly focus on the parameter regimes relevant for the experiments of Riffell and Zimmer.

Our main results are as follows.
On the rigorous level, we are able to establish that the very large shear rates are a dimension reduction mechanism: the expected search time converges to the one of the corresponding
one-dimensional problem with effective 1D chemotaxis. Such result is not unexpected, and is similar to the findings of Freidlin-Wentzel theory \cite{FreidlinWentzell}.
The presence of chemotaxis, however, necessitates some novel elements in the analysis. Numerically, we observe several phenomena that we find interesting.
First, we see quite fast decrease of the expected hitting time already for quite small values of shear and chemotactic coupling.
Second, we discover that in the context of our model,
there is an optimal shear rate range where the searching time is minimal, and is {\it less} than the limiting 1D large shear searching time.
This is entirely due to presence of chemotaxis; when it is absent, the expected hitting time is monotone decaying in flow amplitude.
This finding agrees with the results of biological
experiments. The difference is that in the experiments the fertilization rate declines much more steeply (and probably towards zero) for large shears. This effect is likely not because sperm have trouble finding eggs,
but rather due to their inability to stay around for time necessary for fertilization. In our set up, there is no such mechanism: we just register the hit. Nevertheless,
even on the fundamental level of random search/chemotaxis/fluid flow interaction, we observe the non-trivial phenomena of optimal shear range. The third interesting effect we find
in the framework of our model and within the range of parameters that we tested is that increasing chemical sensitivity appears meaningfully more important for improving search performance
than increasing the maximal speed. We believe that these observations can be useful for better understanding of biological processes that may involve additional elements and
factors, but include the interaction of the three basic forces that we study here.
We note that numerical experiments on random search in a shear flow (without chemotaxis) were also carried out in \cite{CGHKLMP}.

The paper is organized as follows: in the next section, we introduce the general set up and key parameters of the model in more detail and present our numerical scheme.
We then proceed to describe the results of the numerical experiments. After this we state the rigorous results that we are able to prove, and proceed with the proofs.
%This way both the readers that are interested
%primarily in more nuanced numerical results and in rigorous details should be able to navigate the paper easily.

% To conclude, the experiment explores the relationship between fertilization success rate and shear flow speed on a  small scale.

%Even though the sperms flow past the eggs, they cannot attach to them—the fail in attachment results in a dropping fertilization rate.

\section{General Set Up and Numerical Scheme}

\subsection{The set up and first hitting time}
Recall that in this paper we focus on the searching success of an \emph{individual agent}, whose initial position may be \emph{relatively distant} from the target zone.
We focus on the following geometric configuration in our numerical and analytic exploration. The domain $(L\Torus)^2$ has dimension $[0,L)^2$.
The chemical cutoff function $\varphi(\cdot)\in C^\infty([0,\infty))$ is chosen so that $\varphi(0)=0$ and $\varphi'(0)=1$, $\varphi$ is monotone increasing, and $\lim_{r\rightarrow \infty}\varphi(r)= ||\varphi||_\infty<\infty $ (see Figure \ref{fig:phi}).
The norm $||\varphi||_\infty$ has the meaning of the maximal chemical-induced speed of the agent.
%Even though there is a component of the agent's speed that goes into diffusion, throughout the paper
%we will call $||\varphi||_\infty$ \it the maximal agent speed. \rm
The target density $n$ is stationary and concentrated in the target zone $E_\delta$, which is a disk $B((\frac{L}{2},\frac{L}{2});\delta)$. The size of the target $\delta$ is much smaller than $L$.
The total mass of the target density is $||n||_{L^1(\Torus^2)}$ is normalized to be equal to one.
The searching starts at a point $(x_0,y_0)$. In our numerics, the agent starts at point $(0,0)$, which is a distance $\frac{L}{\sqrt{2}}$ away from the target zone. 
\textcolor{black}{ We tried other starting positions with very similar qualitative results.
 The shear profile is adapted to the size of the  torus and is given by $u(y)=\sin\left(\frac{2\pi (y-L/2)}{L}\right),$ with coupling constant $A$. 
We simulated a few other shear flow profiles, but did not observe a significant difference as long as the shear rate near the egg zone is the same. Therefore, we restrict ourselves to the $A\sin\left(\frac{2\pi (y-L/2)}{L}\right)$-flow for the sake of simplicity. 
 In Figure \ref{Fig:Problem_setup}, we provide a diagram with the general setup. }

\begin{figure}[h]
\centering
\includegraphics[scale=0.75]{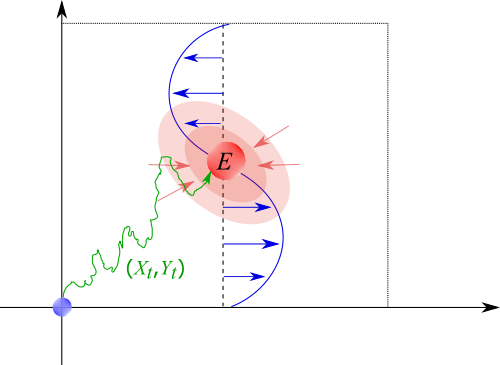}
\caption{Problem setup.}\label{Fig:Problem_setup}
\end{figure}

\begin{figure}[htb!]
    \centering
    \includegraphics[width=0.6\linewidth]{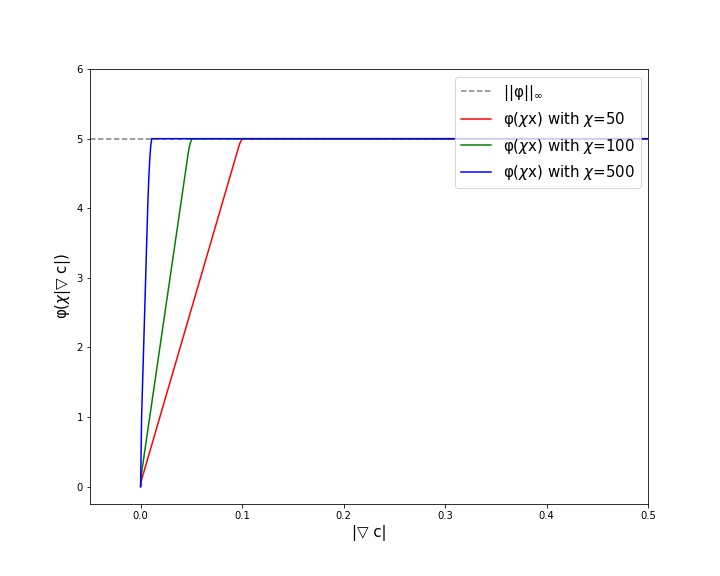}
    \caption{Cutoff function $\varphi (x)$ for chemotaxis.}
    \label{fig:phi}
\end{figure}

To quantify the success of the search, we consider the expectation of the first hitting time of the target zone, i.e.
\begin{align}\label{First_hitting_time}
T^{(x_0,y_0)  }(\omega)=\min\{\tau|\mathbf{X}_\tau(\omega)\in E_\delta=B((L/2,L/2);\delta)\}.
\end{align}
Here $\textcolor{black}{\bf X_\tau(\omega)}$ is the realization of the solution to the stochastic differential equation \eqref{SDE_full}. The expected hitting time depends on the various parameters involved in the system, specifically the size of the torus $L$, shear amplitude $A$ (or shear rate $A/L$),
diffusivity $\nu,$ target size $\delta,$ chemical sensitivity $\chi,$ and maximal chemotactic speed $\|\varphi\|_\infty.$ Observe that to simplify the presentation, we can normalize any two parameters by rescaling space and time. We choose to normalize $\delta=1,$ which means that we measure everything
in units of target size. This size is about $0.1$mm for biological experiments of Riffell and Zimmer, and we annotate our numerical plots accordingly. For different interpretations, one just needs to remember that the target size (or, to be precise in our setting, its radius) is the
unit of length we use.
In the numerical simulations, we will also rescale time to set the value of diffusivity at $0.25.$
 With these changes, we have four parameters remaining in our model that in the terms of the original ones can be expressed as $\tilde L= L/\delta,$ $\tilde A=A \delta /4\nu,$ $\|\tilde \varphi\|_\infty = \|\varphi\|_\infty \delta/4\nu,$ and $\tilde \chi = \chi/\delta.$
 In the rest of the paper, we will abuse notation and omit tilde for the remaining four renormalized parameters. Finally we remark that it is not unreasonable to also introduce two more parameters in front of $c$ and $n$ on the right hand side of the equation \eqref{EQ:chemical} for the attractive
 chemical. However in this paper we do not pursue the analysis of expected hitting time dependence on these parameters.
%We study rigorously the asymptotic regime $A \rightarrow \infty$ and implement numerical experiments to characterize the finer relative importance of each factor.

We note that the first hitting/exit time of the Brownian motion is a classical topic in stochastic analysis. It has a close relation to analysis of elliptic equations - see e.g. well known treatises \cite{Durrett96} and \cite{Oksendal}.
We are going to recall this connection below in the rigorous analysis sections.

{\color{black} Another interesting question that we do not address in this paper is concerned with the fastest or a small group of fastest searchers out of many rather than with the average search time. Indeed, in some settings such as fertilization this may be
the more relevant question, even though in other situations like immune response, average time might be more important as many agents are needed to perform the biological function. We point out that extreme first passage statistics have been analyzed in \cite{Lawley201}
in the limit of the very large number of agents. In \cite{MeersonRedner15}, a similar problem was considered for one-dimensional diffusion and mortal searchers. A review \cite{SchussBasnayakeHolcman19} discusses a variety of settings in biology where the extreme
statistics are relevant. These papers contain further references on the role of redundancy through multiple agents in biological functions. While we performed some simulations to analyze the shortest search time from a collection of agents, we found the
outcomes to be very unstable, at least at the number of simulations that we were able to run. It is thus difficult to compute error bounds on such results as there is no convergence to a fixed value with the increase in the number of agents - rather, one has to explore the entire
probability distribution function. We leave this very interesting question to future work.  }

%\textcolor{black}{It is worth mentioning that there is a significant amount of biological/mathematical research devoted to the multi-agent random searching dynamics. The `redundancy' of searching agents is believed to enhance the net speed of the fertilization and various other biological processes, see, %e.g., \cite{MeersonRedner15,SchussBasnayakeHolcman19}. The relevant concept in the multi-agent searching scenario is the extreme first hitting time. A rigorous discussion of the asymptotic behavior of the average and moments of the extreme first hitting time can be found in
%\cite{Lawley201,Lawley202} and the reference therein. It will be a fascinating question to consider various regimes involving large shear and large crowd limits of the extreme first hitting time on the torus. We will leave it to a later paper.  }

Throughout the paper, the $C$'s denote various constants that do not depend on the key parameters and their value may change from line to line.

\subsection{The numerical scheme}
In the numerical experiment, we run multiple simulations to calculate the average hitting time. The well known Euler-Maruyama method  (see, e.g., \cite{KloedenPearson77}) is applied to simulate the motion of the agent.

\begin{figure}[htb!]
    \centering
    \includegraphics[width=0.5\linewidth]{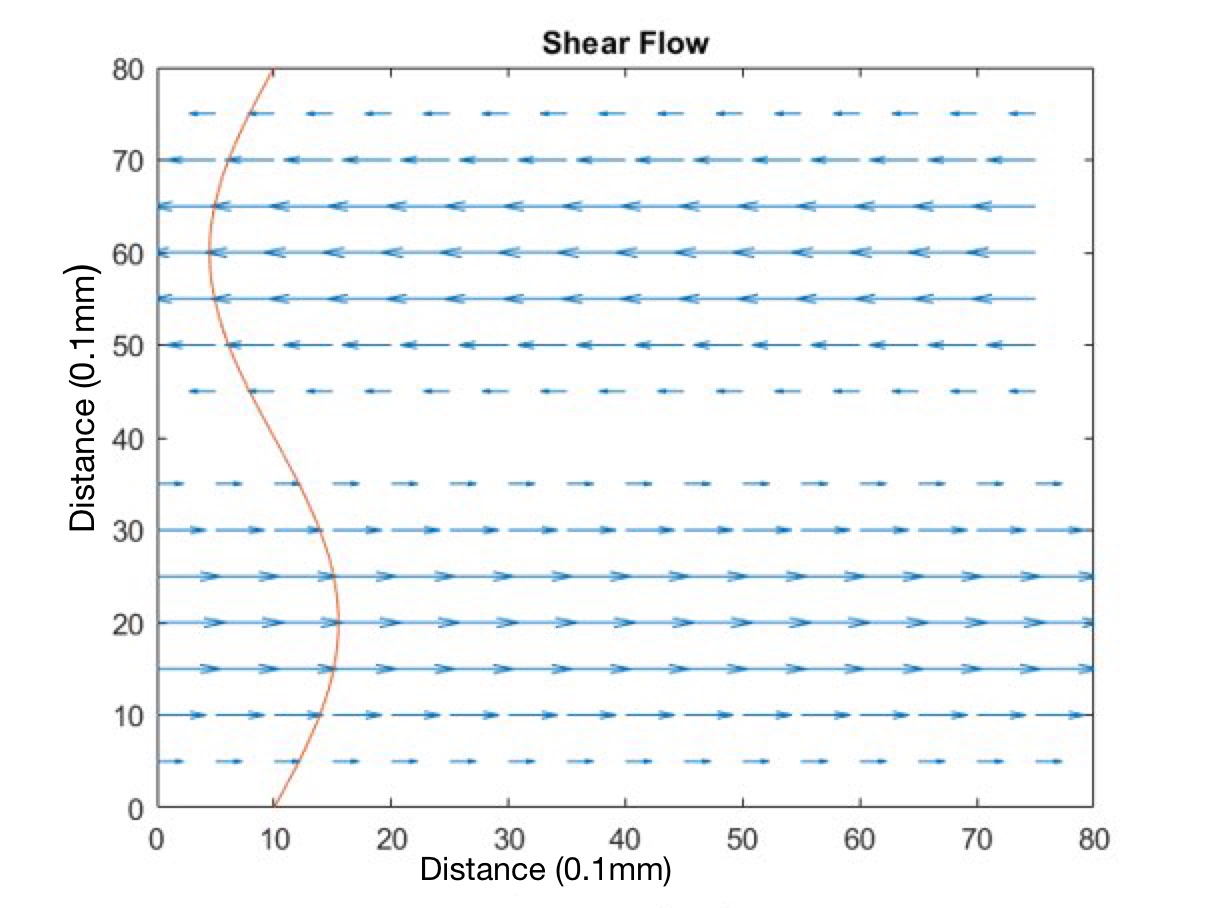}
    \caption{Shear strength multiplied by $dt = 0.01s$.}
    \label{fig:shear}
\end{figure}

There are two less standard aspects in the simulation of the system \eqref{SDE_full}. The first aspect is approximating the shear flow advection, and the second is calculating the aggregation towards the chemoattractant.

For the first issue, we take $u(y) = \sin (2\pi (y-L/2)/L);$ recall that the target is located at $y=L/2,$ so the fluid flow can be thought of as taken relative to the target (Figure \ref{fig:shear}). For very large amplitudes of $A$ (specifically we took $800$ as threshold in the simulation), we saturate the value of the shear replacing $Au(Y_t)$ with $800 \frac{Au(Y_t)}{|Au(Y_t)|}$ if $|Au(Y_t)|>800.$ We do this in order not to take the time step excessively small. We ran several simulations to check that this cutoff procedure does not affect the expected hit time.
From our experiments, it appears that only the structure of $u$ near the target (basically just the shear rate) meaningfully affects the result. %, and the cutoff only applies some distance away from the target.

Next, we calculate the chemoattractant distribution $c$, which in turn determines the aggregation. The standard finite difference method is applied. We use the five-point stencil method to approximate the Laplacian and the central difference method to discretize the advection $Au(y)\partial_x c$. {\color{black}
By inverting the linear system corresponding to the discretized elliptic PDE satisfied by the chemical density, we obtain the numerical value of $c$ on the grid points. The numerical chemical gradient can be obtained through standard finite difference. However, interpolation is needed to determine the chemical gradient on the point away from the grid point. The explicit scheme is as follows. For a fixed position $(x_0,y_0)$, we first identify $36(= 6 \times 6)$ grid points in its neighborhood. Then we apply the cubic spline interpolation to determine the gradient value at the point $(x_0,y_0)$.}
Finally,  the Euler-Maruyama method is applied to simulate the SDE \eqref{EQ:SDE}. 
%It is rare for the particle to land on the grid points on which we have the explicit chemoattractant density. We use the cubic spline interpolation with nearby points to obtain the chemoattractant density profile near the particle's current location. Then we numerically calculate chemical gradient $\nabla c$ at the particle's location using centered differences.

In Figure \ref{Fig:chemo_concentration}, we illustrate that as the shear rate increases, the distribution of the chemical, as expected, gets stretched in the horizontal direction. In Figure \ref{Fig:Chemical_gradient}, we plot the chemical gradient vector.

\begin{figure}[hbt!]\label{Fig:chemotaxis}
    \centering
    \begin{subfigure}[b]{0.5\textwidth}
        \centering
        \includegraphics[width=0.9\linewidth]{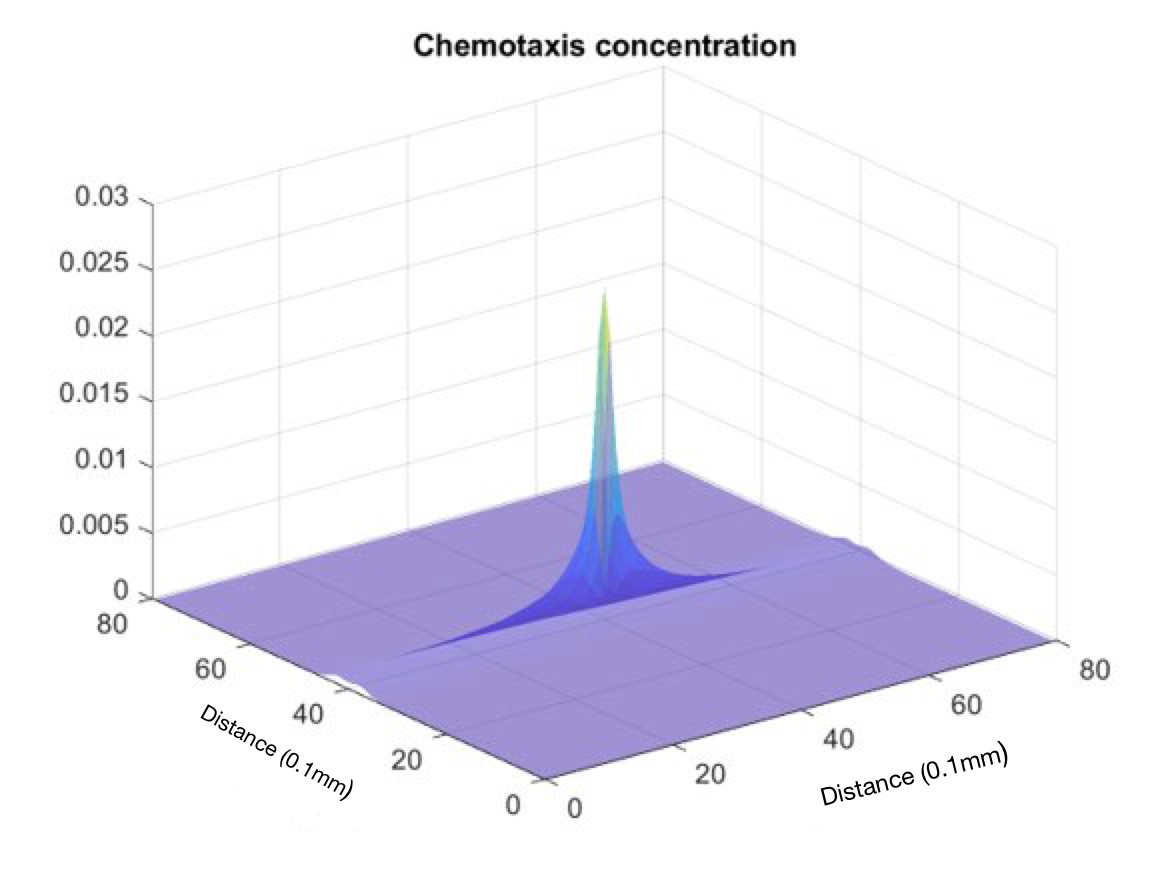}
        \caption{Chemical concentration $c$}
        \label{Fig:chemo_concentration}
    \end{subfigure}%
    ~
    \begin{subfigure}[b]{0.5\textwidth}
        \centering
        \includegraphics[width=0.9\linewidth]{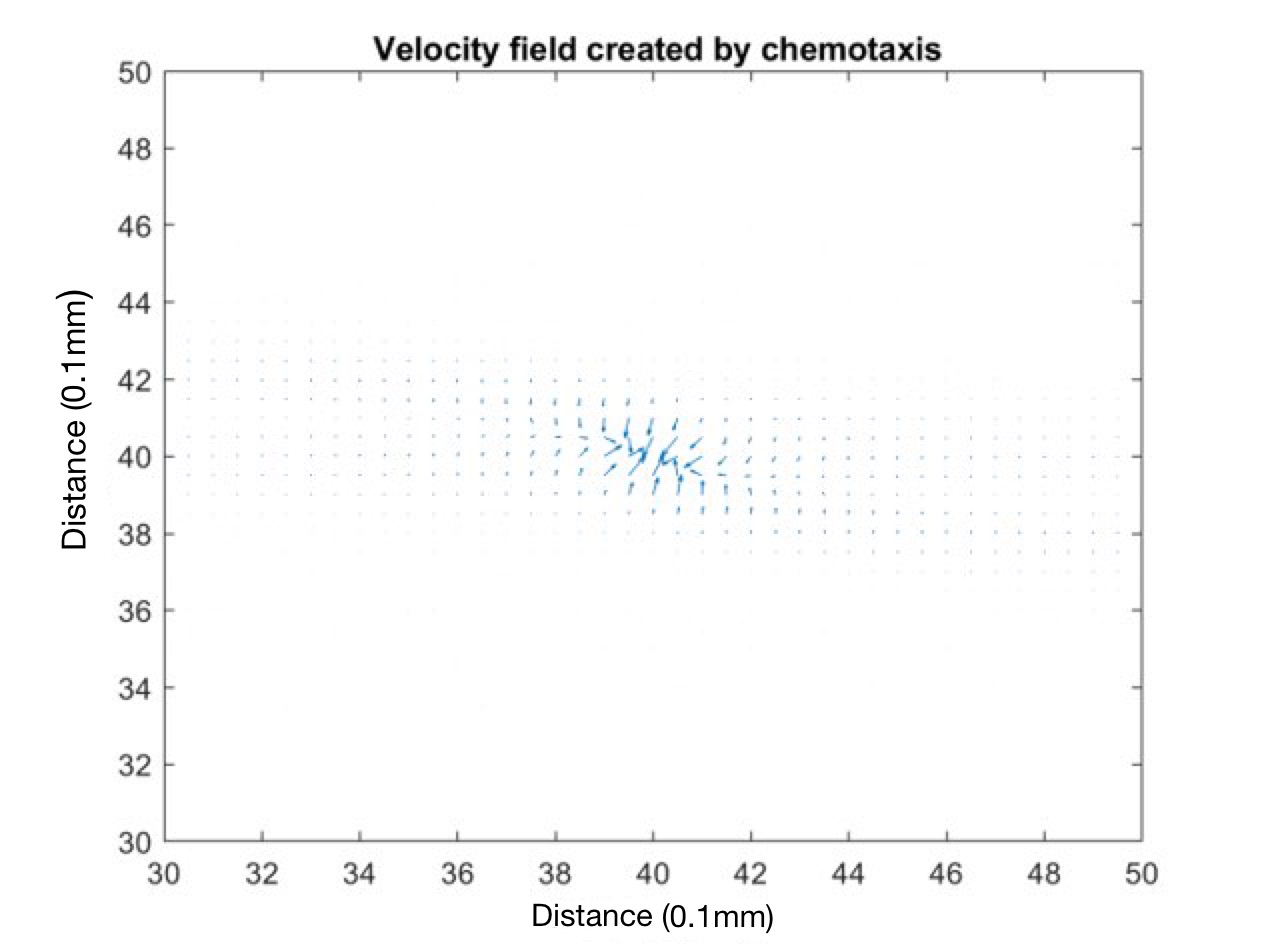}
        \caption{Chemical gradient $\nabla c$}
        \label{Fig:Chemical_gradient}
    \end{subfigure}
    \caption{Chemotaxis concentration and gradient under the effect of shear flow. Boxsize is $80 \ (0.1mm)$ and shear amplitude cutoff is set to $200 \ (0.1 mm) / s$. Based on $2000$ simulations.}
\end{figure}

\section{Numerical Results}
\subsection{Ranges of parameters}
We try to roughly align the ranges of our parameters with the corresponding ranges in the biological experiments of Zimmer and Riffell \cite{ZimmerRiffell11} - at least where these biological parameters are known or can be
roughly estimated. The shear rate in our simulation is defined as the ratio $2\pi A/L$. %The ranges of the parameters are largely inspired by the biological experiments
The typical abalone egg diameter is $0.2mm$. The Taylor-Couette tank used in the experiment has distance of about $8$mm between concentric cylinders,
and the shear rates tested range between $0$ and $10{\rm s}^{-1}.$ Our simulation covers these ranges of parameters and more.
The maximal chemotactic speed $\|\varphi\|_\infty$ is limited by the sperm speed ability, which is $v_0 \sim 0.05$mm/s.
One parameter that is not immediate to estimate is the effective diffusion coefficient.
{\color{black} There is a large number of results in the literature that rigorously deduce effective diffusion-type equations from the underlying velocity-jump processes
describing individual agents - see for example \cite{HO1,OH1} for such derivations in the biological context. However, a limiting asymptotic assumption is necessarily involved in such a transition.
We are not aware of the studies on sperm motion that would support a particular model for the change in direction.
For this reason we adapt a very simplistic heuristic estimate of the diffusion coefficient outlined below.   }
In the absence of chemical stimuli, sperm appear to move in some direction for a while,
then change the direction randomly.  Let $t$ be the time that sperm maintains direction.
Assuming that sperm maintain speed comparable to maximal, over the larger time $T=nt,$ the displacement $D_T$ is given by $D_T = \sum_{i=1}^n X_i,$ where $X_i$ are independent 2D random variables with amplitude
$v_0t$ and random direction uniformly distributed over $[0,2\pi).$ We can estimate the expected displacement by
\[ \E(D_T^2) = \E \left(\sum_{i=1}^n X_i \cdot \sum_{j=1}^n X_j \right) = n \E(X_i^2) = n v_0^2 t^2 = T v_0^2 t. \]
Now for a 2D Brownian motion, we have $\E(B_T^2) = 4 \nu T,$ where $\nu$ is diffusion coefficient. Thus in two dimensions it is reasonable to adopt an estimate
$\nu = \frac14 v_0^2 t.$ The only parameter that we do not have readily available is $t$, but looking at the trajectories of sperm motion provided in \cite{ZimmerRiffell11},
taking $t \sim 1$s appears to be reasonable. This leads to the estimate $\nu \approx 0.06(0.1 {\rm mm})^2/{\rm s}.$
In the numerical simulation, we choose not to have a very small diffusion coefficient and pick $\nu_{\mathrm{num}} =0.25,$ which is equivalent to changing numerical unit time from one second to
roughly $4$ second units. Two parameters in the Table \ref{Table_1} that get affected are the shear rate and maximal chemotactic speed, that in the new units range $0-600(4s)^{-1}$ and $0-5(0.1mm)/(4s)$
(respectively $0-150s^{-1}$ and $0-1.25(0.1mm)/s$ in natural units) in our simulation.
One parameter that we cannot estimate from the biological experiment is the chemical sensitivity $\chi.$
Although the parameter ranges are coordinated with the experiment \cite{ZimmerRiffell11}, we find it likely that they are relevant in a wider range of biological applications.
While we use $4s$ time units in parameter table, on our plots we make an adjustment to more natural time units of just seconds. We do keep the $0.1mm$ length unit on the plots since this unit is the
intrinsic target size parameter rather than a numerical artifact.

%In Table \ref{Table_1}, one can find the ranges of parameters used in the simulation compared to biological experiments of Riffell and Zimmer,%in the respective natural units. % (the numerical values differ for two parameters - that is discussed below)\textcolor{black}{The explanation is not clear, please specify. The sentence in the parenthesis is ambiguous??}.
\begin{table}[ht]
\begin{tabular}{lccl}
Parameters   & Value/Range in {Biology} &   Value/Range in Simulation \\ % & Unit\\
Diffusion Coefficient   & $\sim 0.06$ $(0.1 mm)^2/s$    & $0.25$ $(0.1 mm)^2/(4s)$ \\ %& $(0.1 mm)^2/s$ \\
Egg Radius   & $1$ ($0.1 mm$)       & $1$ ($0.1 mm)$             \\
Box Size     & $\sim 80$ ($0.1 mm$)    &  $50-200$ ($0.1 mm$) \\%       &                 \\
Shear Rate   & $0-12$ ($s^{-1}$)    & $0-600$ $(4s)^{-1}$ \\ %  & $s^{-1}$
Chemical Sensitivity & not clear & $ 50-50000(0.1mm)$ \\
%Shear Amplitude Cutoff & -            & $\sim 800 \ (0.1 mm) / (4s)$\\
Maximal chemotactic speed &  $\sim 0.5$ (0.1mm/s) & $  0 - 5 \ (0.1 mm) / (4s)$
\end{tabular}
\caption{Parameters}\label{Table_1}
\end{table}
\subsection{Brownian motion subject to shear flow}
If there is no chemical attraction, i.e., $\chi=0$, the average first hitting time is monotone decreasing in terms of the shear rate  (Figure \ref{Fig:First_hitting_time_shear}).
Moreover, significant decay happens already at the relatively small values of shear rate (note the logarithmic scale of the graph). For large shear rates, the expected hitting time
approaches the hitting time of 1D Brownian motion where the $x$ coordinate is eliminated (drawn as a line on the graph). The explicit formula for this 1D hitting time is well known
and equal to $\frac{1}{\nu}\left(\frac{L}{2}-\delta\right)^2$ (see the argument before \eqref{form} for a sketch).
\begin{figure}[htb!]
\includegraphics[width=0.75\linewidth]{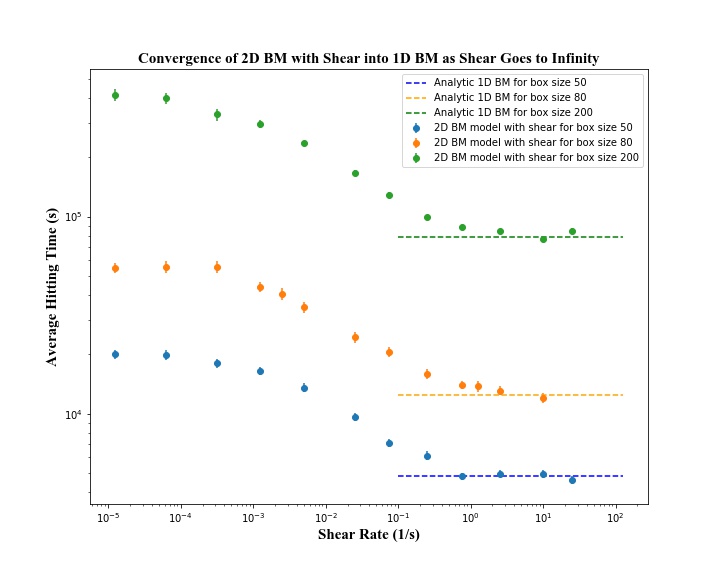}
\centering
\caption{First hitting time with increasing shear rate. Shear amplitude cutoff is set to $200 \ (0.1 mm) / s$. Chemotaxis is not present.}\label{Fig:First_hitting_time_shear}
\end{figure}
We refer to \cite{CGHKLMP} for more detailed information on a similar simulation.

\subsection{Brownian motion subject to shear flow and chemical attraction: optimal shear}

We carry out simulations in tori with three different sizes: $50$, $80$, $200 \ (0.1mm)$. In the Figures \ref{Fig:Fig_50}, \ref{Fig:Fig_80} and \ref{Fig:Fig_200}
we compare the behavior of the expected hitting time dependence on shear amplitude without chemotaxis and with the maximal chemotaxis speed in the range we analyze.
The expected time is computed by averaging over $1000-2000$ simulations, and the vertical bars at each point are two standard deviations of the sample in each direction.
In all these simulations, presence of chemotaxis results in more than double reduction of the expected hitting time even when shear is zero.
Even very small values of shear rate lead to further meaningful reduction of expected hitting time. The optimal value of the shear rate is in all cases
around $0.15-0.3s^{-1}.$ %(note that on plots we use the natural simulation time scale $(4s)^{-1}.$
This indicates that the optimal shear value is not affected by the ambient box size.
At optimal shear rate, the expected hitting time is reduced more than by
another factor of two, and by about a third (less for the largest box) outperforms the one-dimensional large shear rate limit.
The value of this limit is indicated on the plots as a solid line. Note that this is a large shear limit \it without \rm chemotaxis, which is explicitly
computable. Our results in the analytic section rigorously establish that the expected search time converges in the large $A$ limit to 1D problem \it with
\rm effective chemotaxis. However, our numerical simulations suggest that the 1D hitting time of this 1D effective chemotaxis problem is quite close to that of the
1D diffusion without chemotaxis - at least within our ranges of parameters.
\textcolor{black}{The figures \ref{Fig:Fig_50}, \ref{Fig:Fig_50} and \ref{Fig:Fig_50} give an impression that the optimal shear effect becomes less pronounced with increasing box size.
Indeed, the minimal and the large $A$ limit expected time ratios are approximately $3370/4610 \sim 73\%,$ $7880/12170 \sim 65\%,$ and $70400/78400 \sim 90\%$ for the box sizes $50,$ $80,$ and $200.$
One can conjecture that the effect of chemotaxis is relatively short range in the vertical direction, and so for very large box size any possible gain compared to the 1D effective problem
is going to be limited. }

%this is mostly because of the change of vertical axis scale due to larger expected times for small shear rates in bigger box sizes.
%The absolute difference between the optimal average first hitting time and the limiting average first hitting time is 2173.38s for the box of size 50,\, 4760.82s for the box of size 80, and 50599.5s for the box of size 200, respectively. We can see that as the shear rate increases, the absolute %difference is apparent.} (\textcolor{black}{Siming: I think the impression that the ratio is decreasing (or the picture is flattened) is mainly due to the maximum as A goes to zero is enormous in larger boxes. It has nothing to do with the contrast between limiting and optimal behaviors.})

\begin{figure}[hbt!]
     \centering
     \hspace{-3em}
     \begin{subfigure}[b]{0.55\textwidth}
         \centering
         \includegraphics[width=\textwidth]{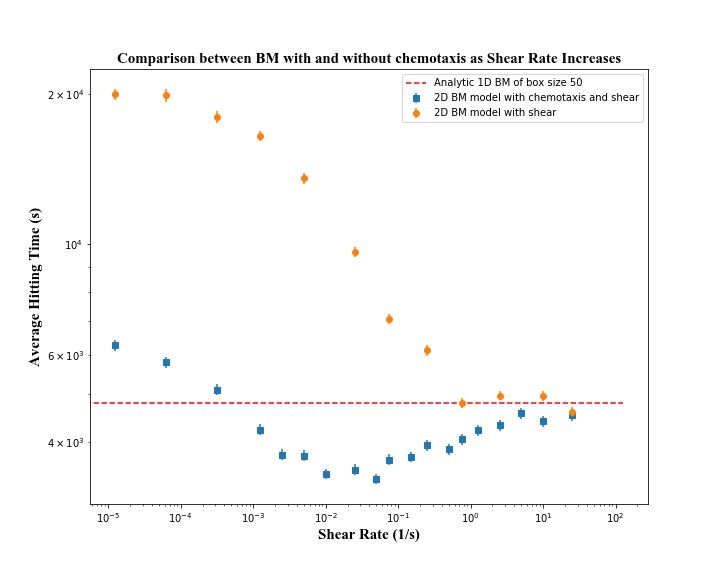}
         \caption{First hitting time with shear flow and \\ chemical attraction: Box size 50 (0.1 mm).}
         \label{Fig:Fig_50}
     \end{subfigure}
     \hspace{-2em}%
     \begin{subfigure}[b]{0.55\textwidth}
         \centering
         \includegraphics[width=\textwidth]{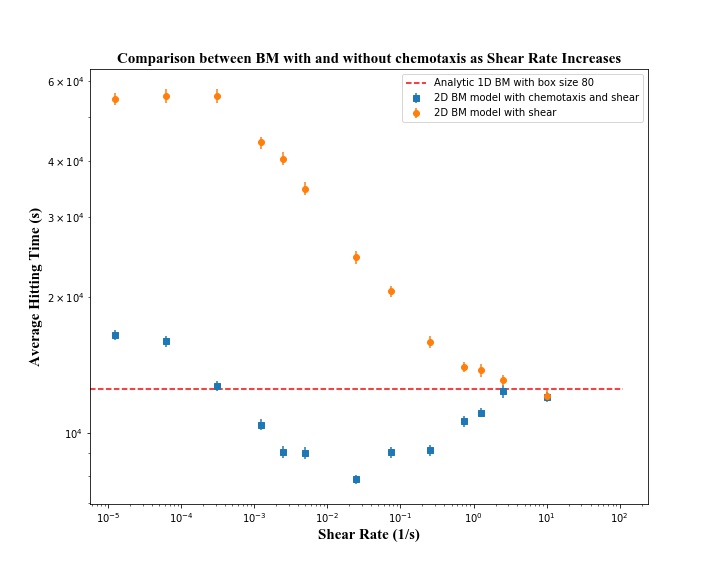}
         \caption{First hitting time with shear flow and \\ chemical attraction: Box size 80 (0.1 mm).}
         \label{Fig:Fig_80}
     \end{subfigure}
      \hspace{-3em}
     \begin{subfigure}[b]{0.55\textwidth}
         \centering
         \includegraphics[width=\textwidth]{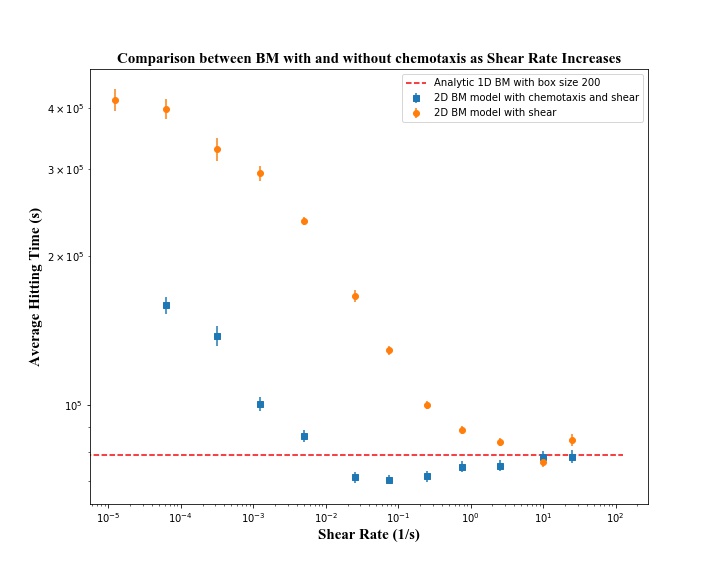}
         \caption{First hitting time with shear flow and \\ chemical attraction: Box size 200 (0.1 mm).}
         \label{Fig:Fig_200}
     \end{subfigure}

     \caption{First hitting time with shear flow and chemical attraction in different box sizes. Shear amplitude cutoff is set to $200 \  (0.1 mm) / s$, chemotaxis sensitivity $\chi = 500 \ (0.1 mm)$, chemotaxis cutoff \textcolor{black}{$||\varphi||_{\infty}=1.25 \ (0.1 mm) / s$}.
     Based on $2000$ simulations.}
     \label{fig:curve}
\end{figure}
\ifx
\begin{figure}[hbt!]
     \centering

     \begin{subfigure}[b]{0.545\textwidth}
         \centering
         \includegraphics[width=\textwidth]{chemo_and_shear_50.jpg}
         \caption{First hitting time with shear flow and chemical attraction: Box size 50 (0.1 mm).}
         \label{Fig:Fig_50}
     \end{subfigure}
     \hfill
     \begin{subfigure}[b]{0.545\textwidth}
         \centering
         \includegraphics[width=\textwidth]{chemo_and_shear_80.jpg}
         \caption{First hitting time with shear flow and chemical attraction: Box size 80 (0.1 mm).}
         \label{Fig:Fig_80}
     \end{subfigure}
          \hfill
     \begin{subfigure}[b]{0.545\textwidth}
         \centering
         \includegraphics[width=\textwidth]{chemo_and_shear_200.jpg}
         \caption{First hitting time with shear flow and chemical attraction: Box size 200 (0.1 mm).}
         \label{Fig:Fig_200}
     \end{subfigure}
     \caption{First hitting time with shear flow and chemical attraction in different box sizes. Shear amplitude cutoff is set to $200 \  (0.1 mm) / s$, chemotaxis sensitivity $\chi = 500 \ (0.1 mm)$, chemotaxis cutoff $||\varphi||_{\infty}=1.25 \ (0.1 mm) / s$.}
     \label{fig:curve}
\end{figure}
\fi

In Figure \ref{fig:different_cutoff}, we explore the dependence of the expected hitting time on shear rate for different values of maximal chemotactic speed.
Here the box size is taken to be $50$ $(0.1)mm$, where the effects we are going to describe are most pronounced (but they are similar for larger boxes). The first interesting
phenomena we observe is that beyond certain point, increasing $\|\varphi\|_\infty$ does not have much effect on expected hitting time:  \textcolor{black}{values $0.125,$ $0.25$ (not pictured) and $1.25$ $(0.1mm / s)$} lead to very close
outcomes. Due to piecewise linear structure of $\varphi,$ the maximal chemotactic speed applies only where the gradient of the chemical $c$ is maximal, meaning near the target.
Although in our simulations this region is not void, apparently it is not sufficiently expansive even for fairly large sensitivity to meaningfully affect the expected hitting time.
Other interesting effects we observe are quite wide range of shear rates where the agent performance exceeds the limiting one-dimensional large shear rate expected search time (the plateau effect),
and the drift in the value of optimal shear rate depending on the maximal chemotactic speed.

\begin{figure}[htb!]
    \centering
    \includegraphics[width=0.75\linewidth]{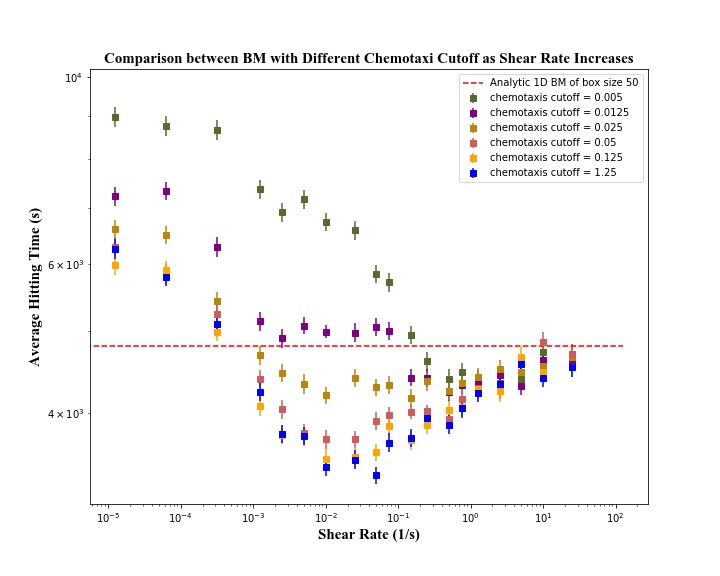}
    \caption{Average first hitting time with varying chemical cut-off $||\varphi||_{\infty}$ and shear rate $A/L$. Box size is $50 \ (0.1mm)$, shear amplitude cutoff is set to $200 \  (0.1 mm) /s$, and chemotaxis sensitivity $\chi = 500\ (0.1 mm)$. }
    \label{fig:different_cutoff}
\end{figure}

The Figure \ref{Fig:plateau effect} illustrates the plateau effect, and shows that for shear rates between $0.015$ and $60$ $s^{-1},$ the agent meaningfully outperforms the limiting one-dimensional large shear rate expected search time
for all chemotactic maximal speeds from \textcolor{black}{$0.025$ to $1.25$ $(0.1mm /  s) $}.  For small chemotactic maximal speeds such as \textcolor{black}{$0.025$ $(0.1mm/s)$}, the expected hitting time values form an almost constant plateau for this
entire range, meaning that even very small values of shear rate combined with very small chemotactic speed are preferable to the dimensional reduction of very high shear rates. We note that sharp improvement
in the agent's search ability even for small values of shear and maximal chemotactic speed are in complete agreement with the results of biological experiments by Riffell and Zimmar. As we mentioned before,
the fertilization rate success in their experiments starts falling  for large values of shear much more dramatically than we observe in our computations. But this is natural: as discussed in the papers \cite{Riffelletall04}, \cite{RiffellZimmer07}, \cite{ZimmerRiffell11}, in the fast shear environment, strong shear flows triggers spinning of the searching sperms. As a result, the sperms are less likely to attach to the eggs and succeed in fertilization.
Hence the fertilization process often fails in the fast shear regime. However, the spinning effect is not taken into account in our numerical simulation. Nevertheless, we observe the great enhancement
of the searching functions for small parameters and the optimal shear rate effect even in the context of our fundamental model, which suggests that these effects are prevalent across many different settings in biology.

\begin{figure}[hbt!]\label{Fig:chemotaxis_cutoff}
    \centering
     \hspace{-4em}
    \begin{subfigure}[b]{0.55\textwidth}
        \centering
        \includegraphics[width=1\linewidth]{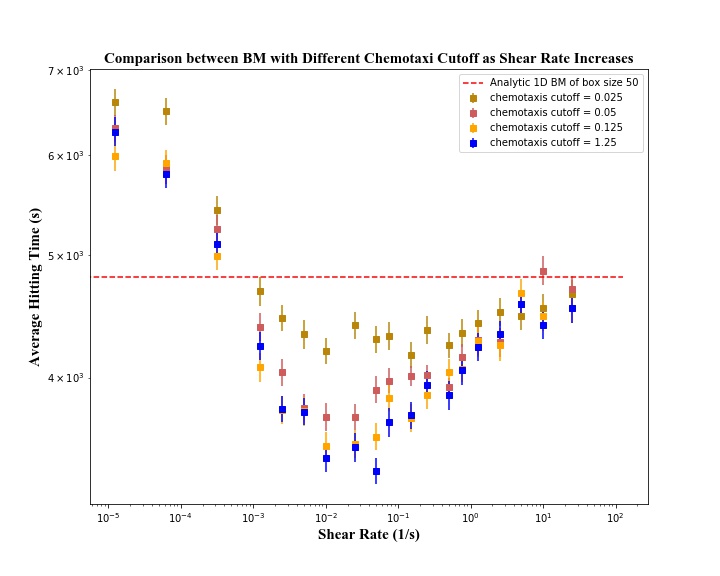}
        \caption{Plateau effect}
        \label{Fig:plateau effect}
    \end{subfigure}%
    \hspace{-2em}%
    \begin{subfigure}[b]{0.55\textwidth}
        \centering
        \includegraphics[width=1\linewidth]{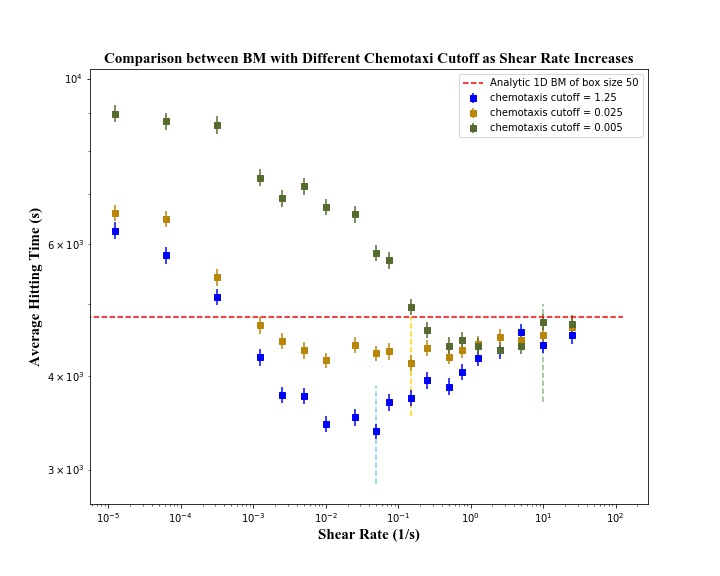}
        \caption{Changes of optimal shear rate}
        \label{Fig:optimal_shear}
    \end{subfigure}
     \hspace{-4em}
    \caption{Plateau effect and changes of optimal shear rate under different chemotaxis cutoff Boxsize is $80 \ (0.1mm)$ and shear amplitude cutoff is set to $200 \ (0.1 mm) /s$.}
\end{figure}

\ifx
\begin{figure}[hbt!]
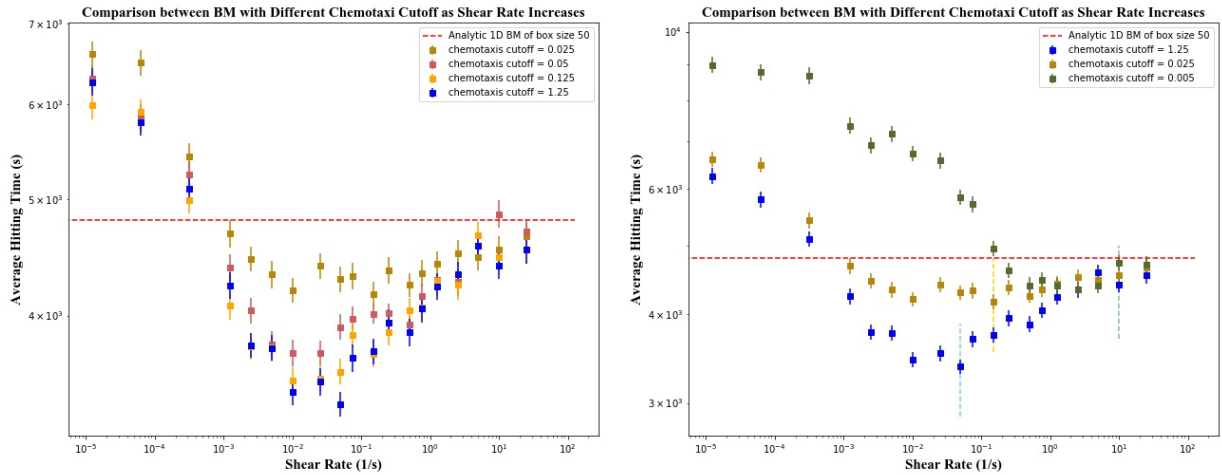
\label{Fig:chemotaxis_cutoff}
    \centering
    \begin{subfigure}[b]{0.5\textwidth}
        \centering
        \includegraphics[width=0.9\linewidth]{chemo_and_shear3.jpg}
        \caption{Plateau effect}
        \label{Fig:plateau effect}
    \end{subfigure}%
    ~
    \begin{subfigure}[b]{0.5\textwidth}
        \centering
        \includegraphics[width=0.9\linewidth]{chemo_and_shear4.jpg}
        \caption{Changes of optimal shear rate}
        \label{Fig:optimal_shear}
    \end{subfigure}
    \caption{Plateau effect and changes of optimal shear rate under different chemotaxis cutoff Boxsize is $80 \ (0.1mm)$ and shear amplitude cutoff is set to $200 \ (0.1 mm) /s$.}
\end{figure}
\fi

The Figure \ref{Fig:optimal_shear} illustrates the dependence of the optimal shear value on chemotactic maximal speed. A natural conjecture is whether the combined effect of shear and chemotaxis is strongest
at the threshold where the agent is just able to outswim the shear flow in the neighborhood of the target. Indeed, if the shear becomes too strong it may nullify the ability of the agent to benefit from the chemical
signal even if it is perceptible. However if this simple mechanism was indeed accurate, we should observe the decline in the optimal shear value when chemotactic maximal speed declines. We could not isolate the parameter regime where
such phenomenon would be clearly observable. Apparently, the interaction between shear and chemotaxis is more nuanced and subtle.
It appears that for the strong and moderate values of chemotactic maximal speeds, the optimal shear values were comparable, in $\sim 0.06-0.3 s^{-1}$ range. For the small values
of maximal chemotactic speed, the optimal shear value tended to go up, not down. For example, for the maximal speed $ 0.005mm/s,$ the optimal shear value is around $1.5s^{-1}.$
The benefit of the shear flow appears to outweigh inability of the agent to go against it for small values of maximal chemotactic speed - up to a point. Very strong shears lead to expected times close to the effective
1D problem for all values of maximal chemotactic speed (at least in the range considered in this paper).

\begin{figure}[htb!]
    \centering
    \includegraphics[width=0.8\linewidth]{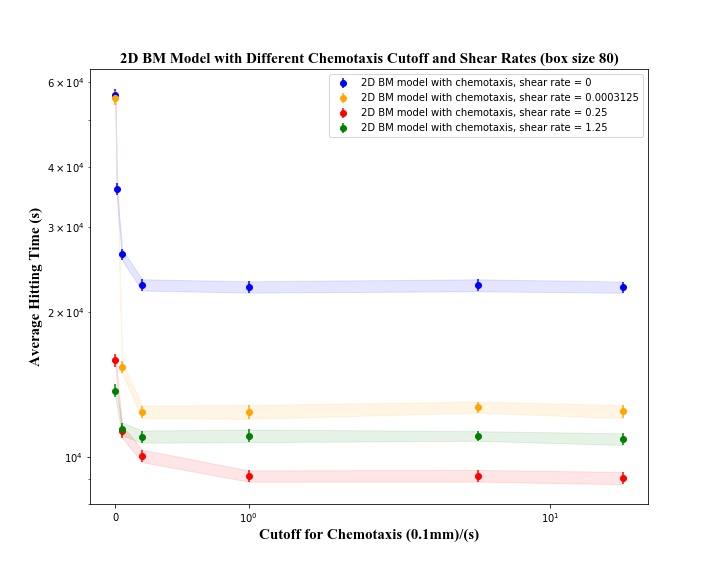}
    \caption{Average first hitting time with varying chemical cut-off $||\varphi||_{\infty}$ and shear rate $A/L$. Boxsize is $80 \ (0.1mm)$, shear amplitude cutoff is set to $ 200 \  (0.1 mm) / s$, and chemotaxis sensitivity $\chi = 500 \ (0.1 mm)$. }
    \label{fig:shear_rate_sensitivity}
\end{figure}

In Figure \ref{fig:shear_rate_sensitivity}, we provide a different perspective on the same phenomena - here the expected hitting time is plotted as a function of maximal chemotactic speed for different values of shear rate.
We see that initially increasing shear rate leads to decrease in the expected hit time, but then it starts going in the opposite direction for all but the smallest values of the maximal chemotactic speed.

\begin{figure}[htb!]
    \centering
    \hspace{-4em}
    \begin{subfigure}[b]{0.55\textwidth}
        \centering
        \includegraphics[width=\textwidth]{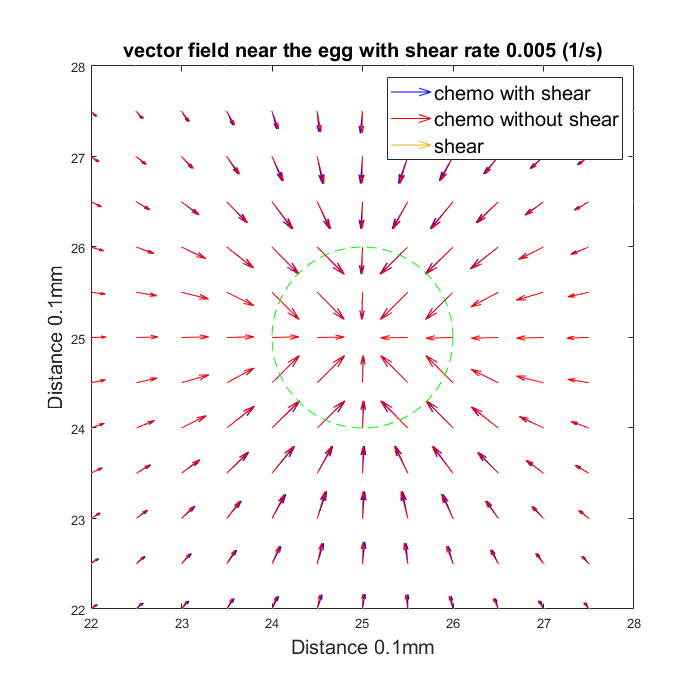}
        \caption[]%
        {{\small Shear rate = 0.03 $(s)^{-1}$}}
        \label{fig:vector_field_0.005}
    \end{subfigure}
    \hspace{-2em}%
    \begin{subfigure}[b]{0.55\textwidth}
        \centering
        \includegraphics[width=\textwidth]{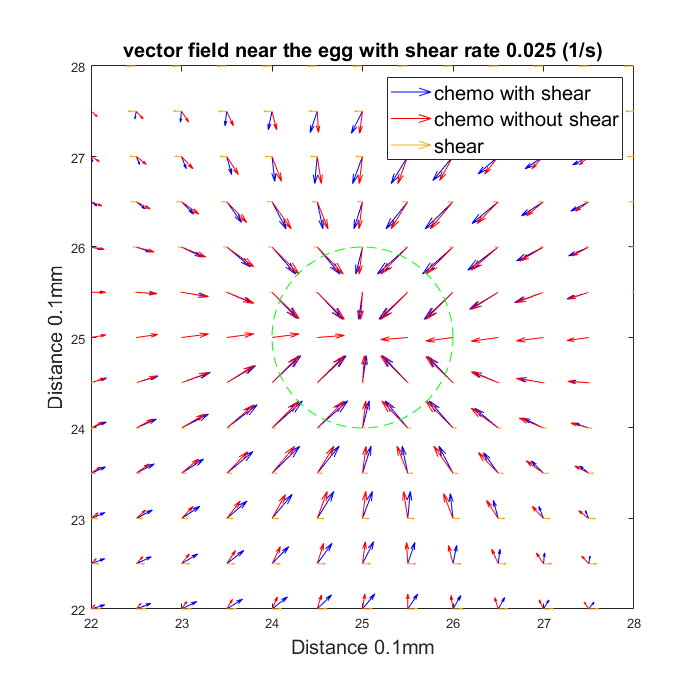}
        \caption[]%
        {{\small Shear rate = 0.15 $(s)^{-1}$}}
        \label{fig:vector_field_0.025}
    \end{subfigure}
    \vskip\baselineskip
    \hspace{-8em}
    \begin{subfigure}[b]{0.55\textwidth}
        \centering
        \includegraphics[width=\textwidth]{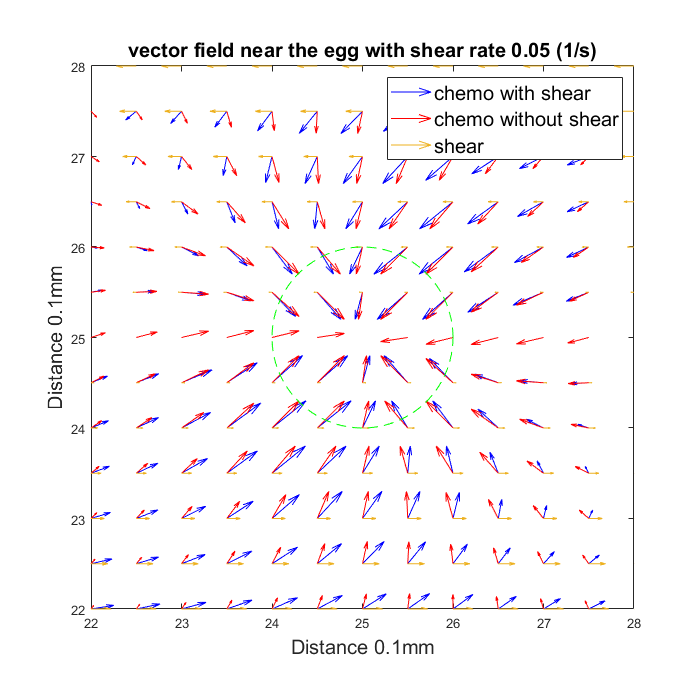}
        \caption[]%
        {{\small Shear rate = 0.3 $(s)^{-1}$}(Optimal)}
        \label{fig:vector_field_0.05}
    \end{subfigure}
    \hspace{-2em}%
    \begin{subfigure}[b]{0.55\textwidth}
        \centering
        \includegraphics[width=\textwidth]{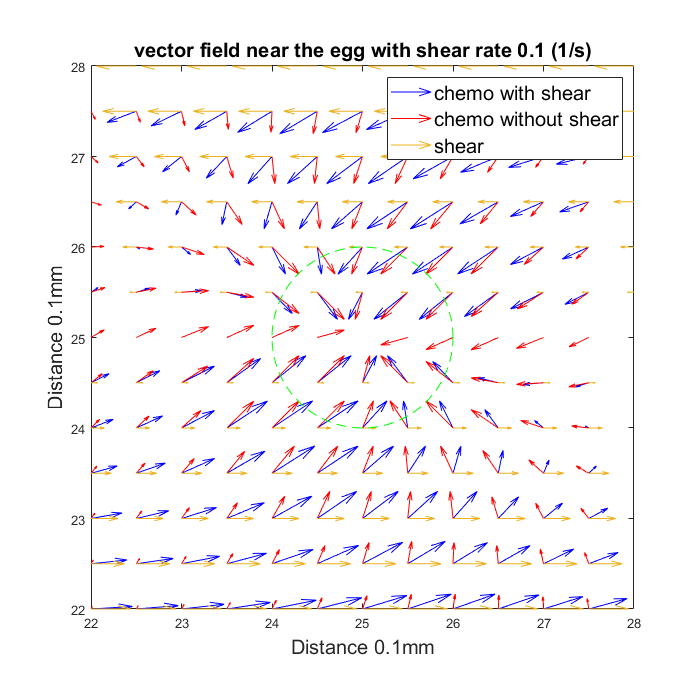}
        \caption[]%
        {{\small Shear rate = 0.6 $(s)^{-1}$}}
        \label{fig:vector_field_0.1}
    \end{subfigure}
     \hspace{-4em}
    \caption{Vector fields around the egg with different shear rates in a box of size $50 \ (0.1mm)$. Shear amplitude cutoff is set to $200 \ (0.1 mm) / s$, chemotaxis sensitivity $\chi = 500 \ (0.1 mm)$, chemotaxis cutoff $||\varphi||_{\infty}=1.25 \ (0.1 mm) /s$.}
    \label{fig:vector_fields}
\end{figure}

\ifx
\begin{figure}[htb!]
    \centering
    \begin{subfigure}[b]{0.485\textwidth}
        \centering
        \includegraphics[width=\textwidth]{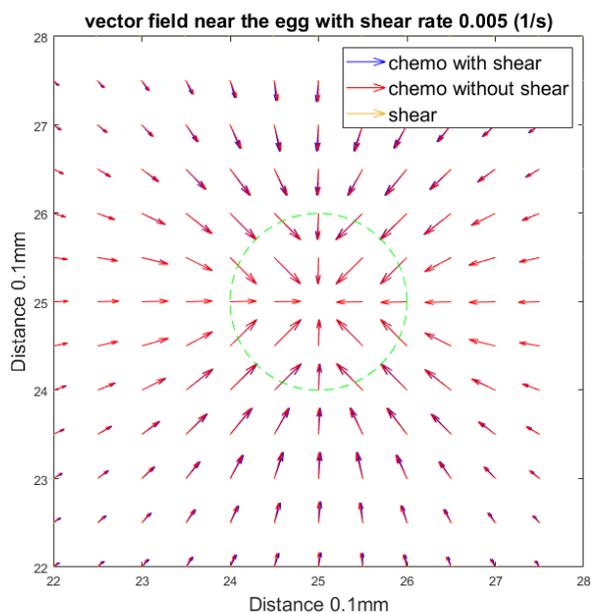}
        \caption[]%
        {{\small Shear rate = 0.03 $(s)^{-1}$}}
        \label{fig:vector_field_0.005}
    \end{subfigure}
    \hfill
    \begin{subfigure}[b]{0.485\textwidth}
        \centering
        \includegraphics[width=\textwidth]{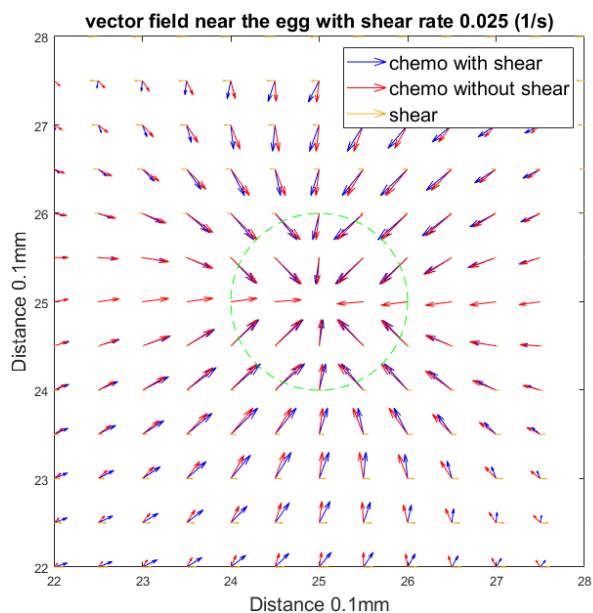}
        \caption[]%
        {{\small Shear rate = 0.15 $(s)^{-1}$}}
        \label{fig:vector_field_0.025}
    \end{subfigure}
    \vskip\baselineskip
    \begin{subfigure}[b]{0.485\textwidth}
        \centering
        \includegraphics[width=\textwidth]{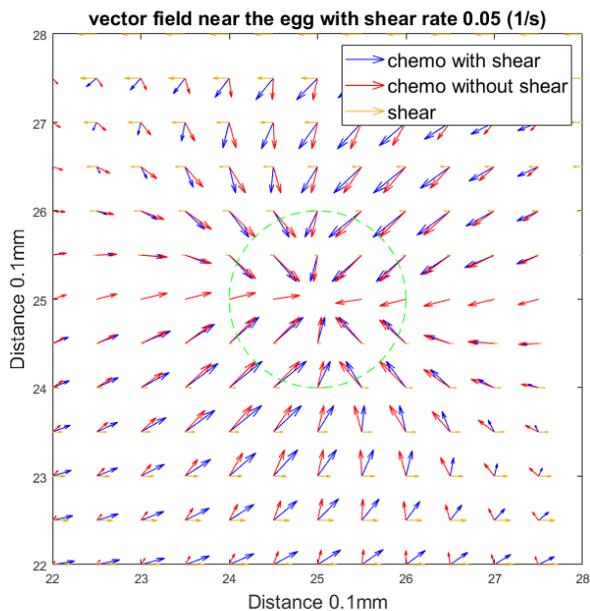}
        \caption[]%
        {{\small Shear rate = 0.3 $(s)^{-1}$}(Optimal)}
        \label{fig:vector_field_0.05}
    \end{subfigure}
    \hfill
    \begin{subfigure}[b]{0.45\textwidth}
        \centering
        \includegraphics[width=\textwidth]{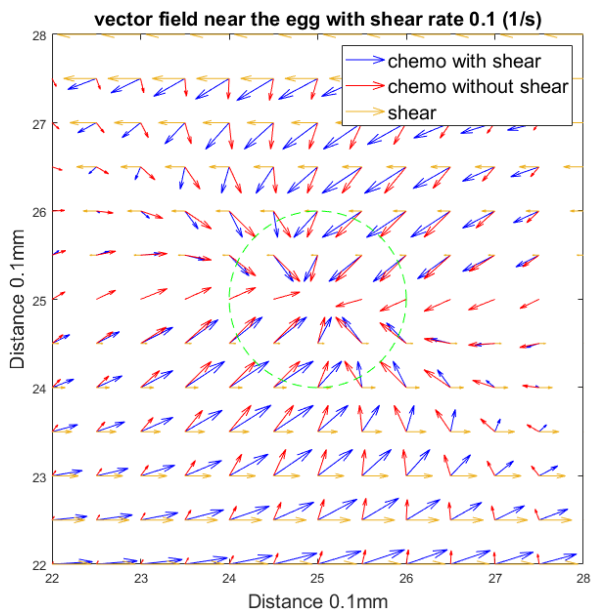}
        \caption[]%
        {{\small Shear rate = 0.6 $(s)^{-1}$}}
        \label{fig:vector_field_0.1}
    \end{subfigure}
    \caption{Vector fields around the egg with different shear rates in a box of size $50 \ (0.1mm)$. Shear amplitude cutoff is set to $200 \ (0.1 mm) / s$, chemotaxis sensitivity $\chi = 500 \ (0.1 mm)$, chemotaxis cutoff $||\varphi||_{\infty}=1.25 \ (0.1 mm) /s$.}
    \label{fig:vector_fields}
\end{figure}
\fi

To further understand the optimal shear rate effect, we performed a numeric experiment without Brownian motion:
\begin{subequations}\label{DE_full}
\begin{align}
\left(\begin{array}{rr}dX_t\\dY_t\end{array}\right)=&\left(\begin{array}{cc} {A} u(Y_t)\\0\end{array}\right)dt+\left(\begin{array}{c}V^{(1)}\\V^{(2)}\end{array}\right)dt;\label{EQ:DE}\\
V=&(V^{(1)},V^{(2)})=\varphi( {\chi} |\nabla c|)\frac{\na c}{|\nabla c|},\quad-\de c+ {A}u(y)\pa_x c=n -c.\label{EQ:chemical2}
\end{align}%\end{align}\begin{align}
\end{subequations}
This system involves the interaction of shear and chemotaxis only, and so it is deterministic.
Note that in \eqref{DE_full}, there is no guarantee that agent will find the target at all.
The result depends on the initial location of the agent. We consider a sample of agents equally spaced along the vertical axis
at $x=0$ (recall that the target is located at $(L/2,L/2)$). Instead of computing the search time, we find the percentage of
agents that do hit the target within a sufficiently large time frame.
In Figure \ref{fig:vector_fields}, we can see the vector fields near the egg with different shear rates in a box of size $50 \ (0.1 mm)$.
The arrows in yellow represent vector field created by shear flow, the arrow in red represent the vector field created by chemotaxis
(note that the effect of shear is still present when we numerically solved for chemical gradient), and the arrows in blue present the sum of shear and chemoattrant vector fields.

\begin{figure}[htb!]
\includegraphics[width=0.8\linewidth]{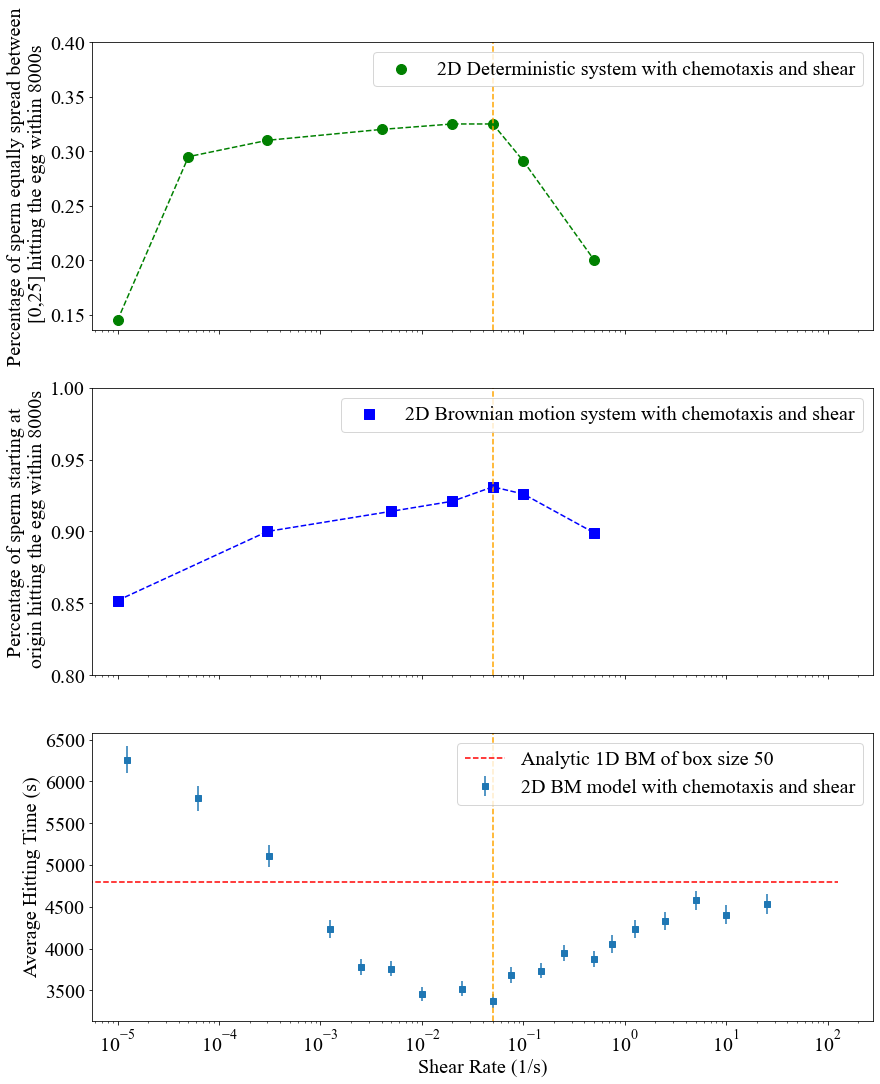}
 \centering
 \caption{Effect of chemotaxis and shear in egg-searching process in deterministic system vs. stochastic system in a box of size $50 \ (0.1mm)$. Shear amplitude cutoff is set to $200 \ (0.1 mm) / s$, chemotaxis sensitivity $\chi = 500 (0.1mm)$, chemotaxis cutoff $||\varphi||_{\infty}=1.25 \ (0.1 mm) /s$. Yellow dotted line denotes optimal configuration with shear rate $0.3
  \ (s)^{-1}$.}\label{fig:BM_Deterministic}
\end{figure}

The maximal agent success rate for this setup happens at values of shear similar to the ones leading to smallest expected hitting time in the simulations with diffusion. In
Figure \ref{fig:BM_Deterministic}, we provide a comparison of simulations without and with diffusion for $L=50\ (0.1 mm),$ $\chi=500 \ (0.1 mm)$ and $\|\varphi\|_\infty =1.25\ (0.1 mm/s).$
 In the simulation without diffusion, we place one agent every $0.01 \ (0.1 mm)$ distance apart, i.e, there are 2501 agent sampling points in the interval $[0,25],$ and let them evolve according to the system \eqref{DE_full}
for time $8000\  s$. Then we record the number of agents that successfully hit the target in this time frame to obtain Figure \ref{fig:BM_Deterministic}.
%In the deterministic system within a fixed time, We can see an initial increase in percentage of sperm hitting the egg successfully as shear rate increases, then followed by a decrease as shear rate continue to increase.
This simulation in deterministic system gives a natural parallel to the phenomenon we observed in the stochastic system (Figure \ref{Fig:Fig_50}).
Moreover, the optimal shear rates are similar in both cases, and the plateau effect is even more pronounced in the deterministic case.
Thus these effects are intrinsic features of interaction between shear flow and chemotaxis.

{\color{black} %\textbf{Siming: Yishu did two sets of figures. I think one is the trajectories in the full torus, and the other set is in the neighborhood of the egg zone. I am not sure which set to put, so I put both of them here.  }

We also show sample trajectories of the agent right before hitting the target zone (Figure \ref{fig:full_trajectory}). When the shear rate is optimal (0.3 $s^{-1}$), we observe that the searching agent can turn around and approach the target zone. On the other hand, if the shear rate overpowers the chemical attraction, the searching agent can be washed away even though it is right next to the target.
\begin{figure}[htb!]
    \centering
       \ifx
    \begin{subfigure}[b]{0.485\textwidth}
        \centering \includegraphics[width=\textwidth]{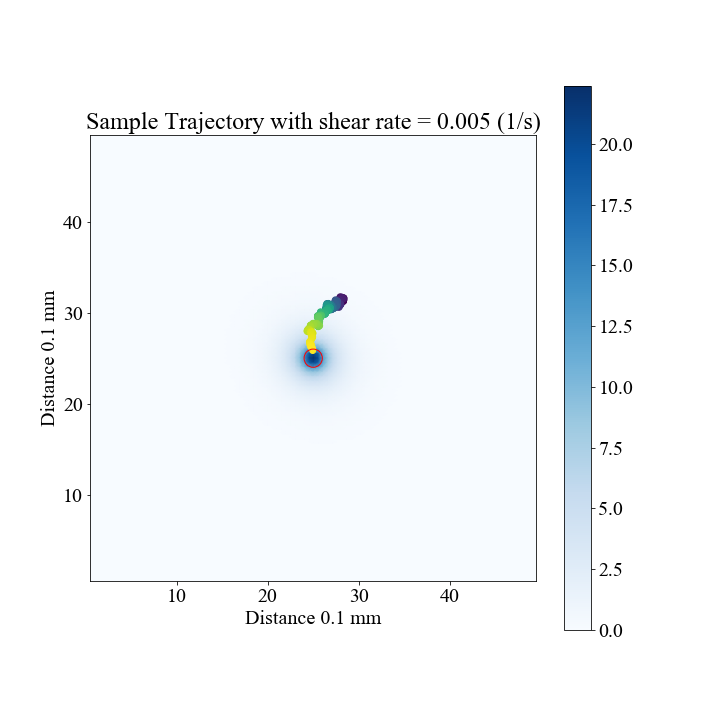}
        \caption[]%
        {{\small Shear rate = 0.03 $(s)^{-1}$}}
        \label{fig:full_trajectory_0.005}
    \end{subfigure}
    \hfill
    \begin{subfigure}[b]{0.485\textwidth}
        \centering
        \includegraphics[width=\textwidth]{sample_full_traj0025.png}
        \caption[]%
        {{\small Shear rate = 0.15 $(s)^{-1}$}}
        \label{fig:full_trajectory_0.025}
    \end{subfigure}
    \vskip\baselineskip
    \fi
    \begin{subfigure}[b]{0.485\textwidth}
        \centering
        \includegraphics[width=\textwidth]{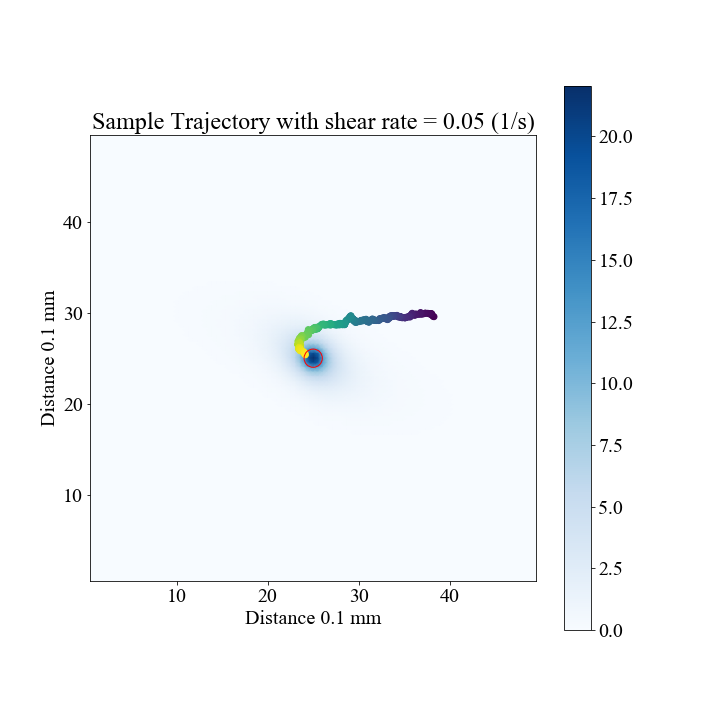}
        \caption[]%
        {{\small Shear rate = 0.3 $(s)^{-1}$}(Optimal)}
        \label{fig:full_trajectory_0.05}
    \end{subfigure}
    \hfill
    \begin{subfigure}[b]{0.485\textwidth}
        \centering
        \includegraphics[width=\textwidth]{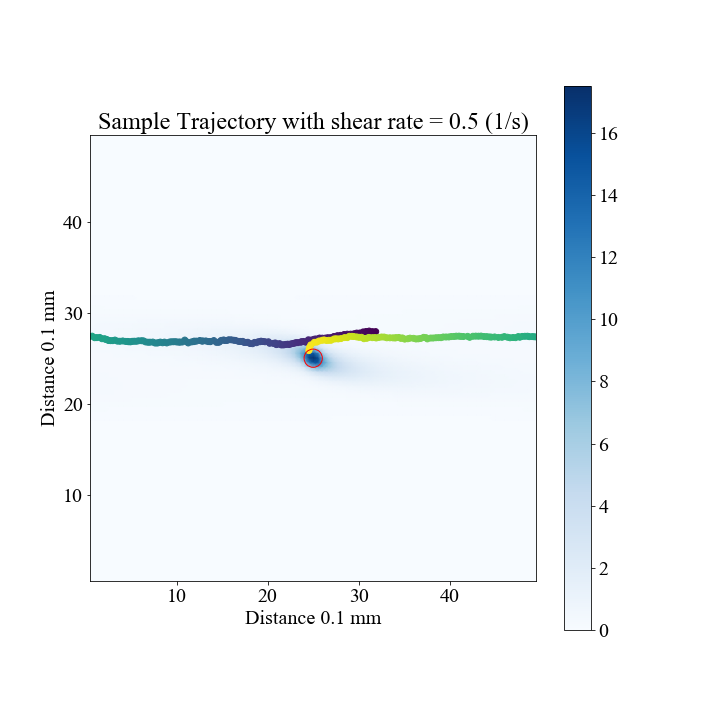}
        \caption[]%
        {{\small Shear rate = 3.0 $(s)^{-1}$}}
        \label{fig:full_trajectory_0.5}
    \end{subfigure}
    \hfill
    \begin{subfigure}[b]{0.485\textwidth}
        \centering
        \includegraphics[width=\textwidth]{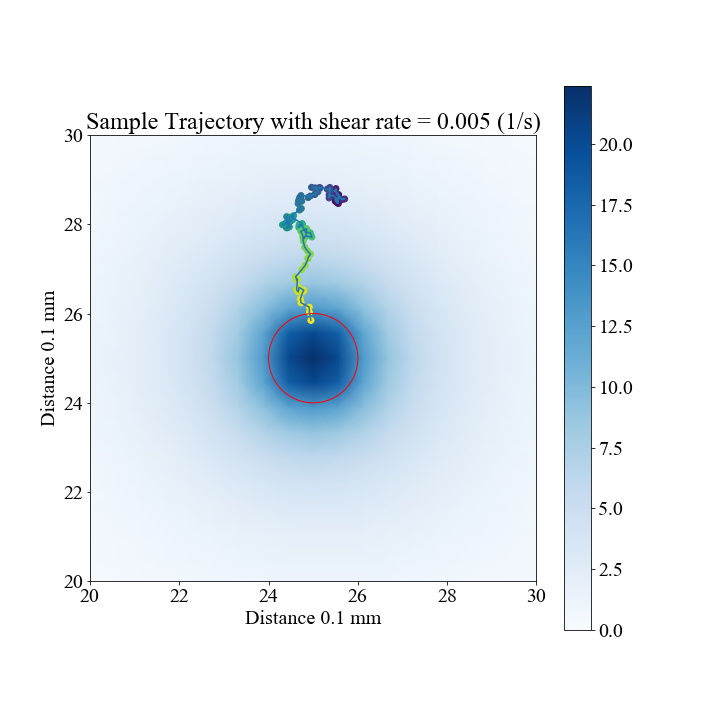}
        \caption[]%
        {{\small Shear rate = 0.03 $s^{-1}$}}
        \label{fig:local_trajectory_0.005}
    \end{subfigure}
    \hfill
    \begin{subfigure}[b]{0.485\textwidth}
        \centering
        \includegraphics[width=\textwidth]{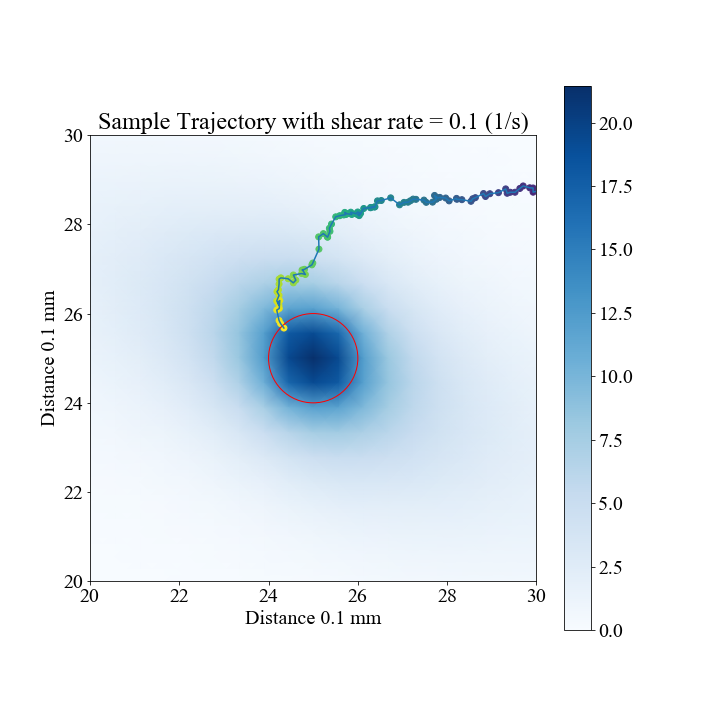}
        \caption[]%
        {{\small Shear rate = 0.6 $s^{-1}$}}
        \label{fig:local_trajectory_0.1}
    \end{subfigure}
    \caption{Sample searching trajectories around the egg with different shear rates in a box of size $50 \ (0.1mm)$. 
    %Shear amplitude cutoff is set to $200 \ (0.1 mm) / s$, 
    Chemotaxis sensitivity $\chi = 500 \ (0.1 mm)$, chemotaxis cutoff $||\varphi||_{\infty}=1.25 \ (0.1 mm) /s$.}
    \label{fig:full_trajectory}
\end{figure}

{\color{black}
To better understand what happens at shear rate values close to optimal, we plot several graphs (Figure \ref{Fig:angle_density}) that depict the probability distribution of the agent's hitting points on the surface of the target parameterized by the approach angle $\theta$ (with and without chemotaxis and at different values of shear rate). The approach angle $\theta$ is defined as the angle between the first hitting position of the target zone $E$ and the positive direction of the $x$-axis. To compute the approach angle from the discrete trajectory of the searching agent, we identify the first position of the agent after entering the target zone. Then we interpolate between this entering position and the agent's previous position in the simulation.
The interpolation line will intersect the boundary of the egg zone at a point. The approach angle is calculated using this intersection point.
%To make the original plot smoother, we also apply local averaging.
\begin{figure}[htb!]
    \centering
    \begin{subfigure}[b]{0.485\textwidth}
        \centering
        \includegraphics[scale=0.4]{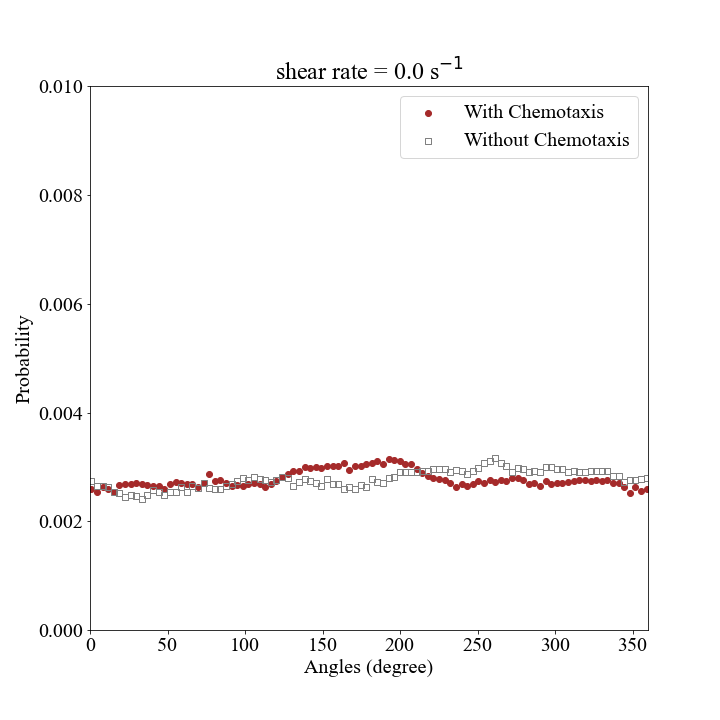}
        \caption[]%
        {{\small Shear rate = 0 $s^{-1}$}}
        \label{fig:Angle_0}
    \end{subfigure}
\hfill
    \begin{subfigure}[b]{0.485\textwidth}
\includegraphics[scale=0.4]{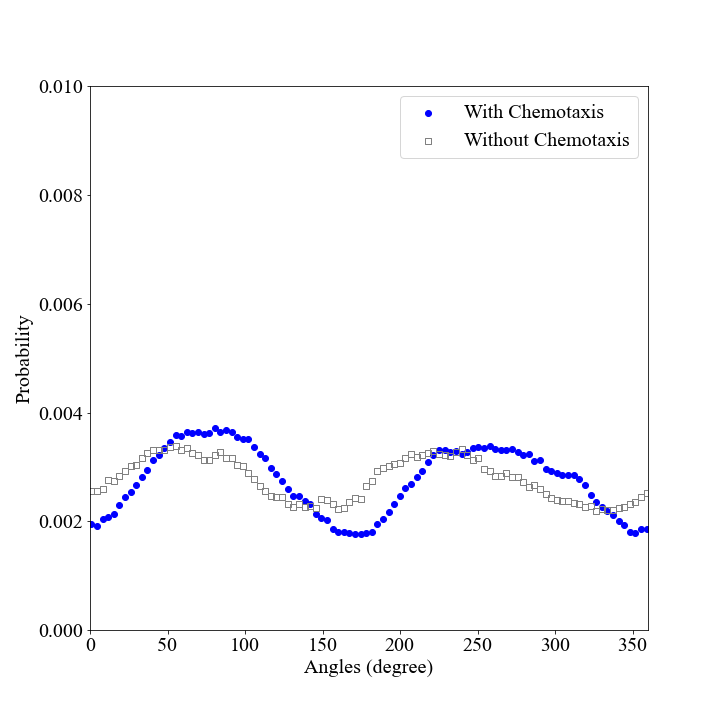}
\caption[]%
        {{\small Shear rate =  0.03 $s^{-1}$}}
        \label{fig:Angle_0005}
    \end{subfigure}

  \begin{subfigure}[b]{0.485\textwidth}     \centering
\includegraphics[scale=0.4] {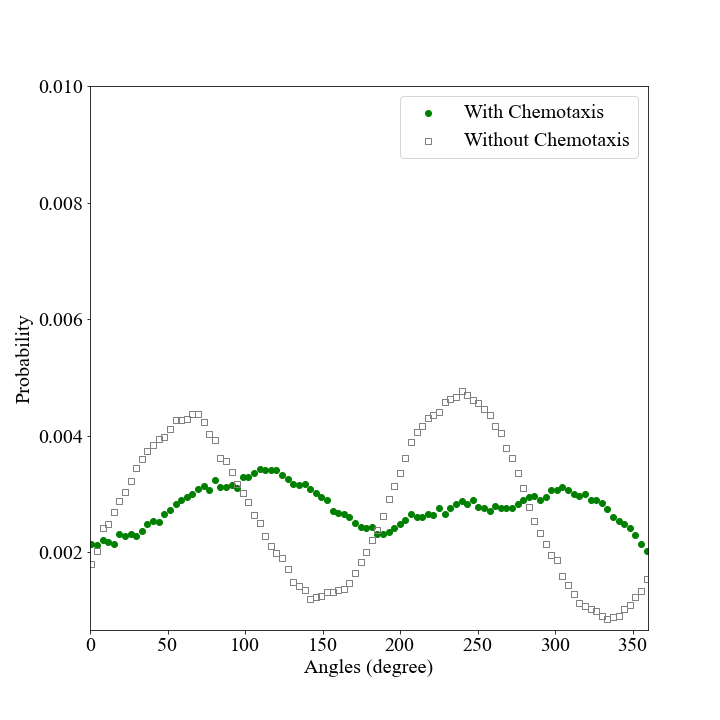}
\caption[]%
        {{\small Shear rate = 0.3 $s^{-1}$}}
        \label{fig:Angle_005}
    \end{subfigure}
    \hfill
    \begin{subfigure}[b]{0.485\textwidth}
\includegraphics[scale=0.4]{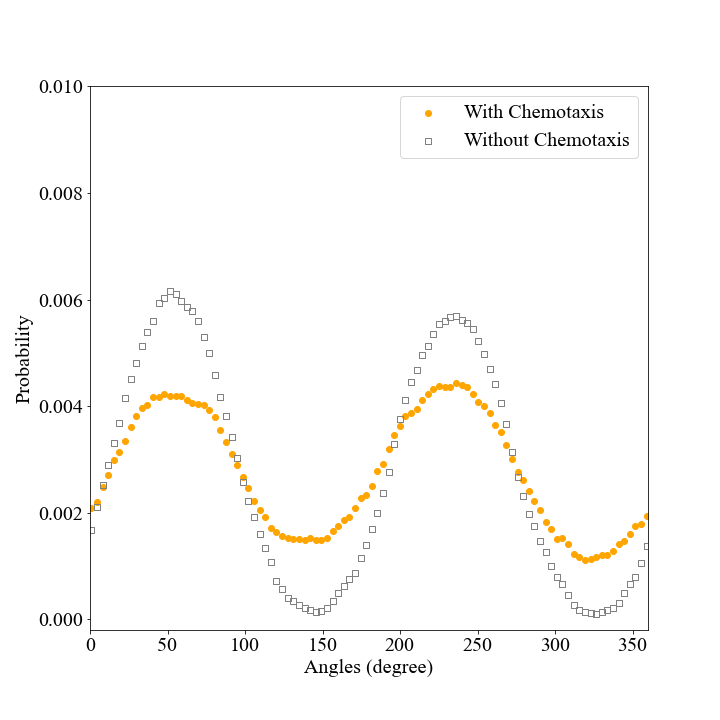}
\caption[]%
        {{\small Shear rate = 3.0  $s^{-1}$}}
        \label{fig:Angle_05}
    \end{subfigure}
 \caption{The empirical distribution of the approach angle with and without chemotaxis for varying shear rates} \label{Fig:angle_density}

\end{figure}

At zero shear rate, the distribution of the approach angles is close to uniform both with and without chemotaxis. For small values of shear rate, the distributions with and without chemotaxis deviate from uniform 
and start to shift away from each other, although they remain close. For the shear only case hitting the target on the side exposed to the shear becomes more likely than on the protected down flow side, while the angle distribution with chemotaxis
begins to shift to the right compared to shear only. At near optimal shear rates, we see an interesting phenomenon in that 
the peaks of angle distributions with and without chemotaxis are clearly misaligned. Without chemotaxis, the peaks are around $70^\circ$ and $250^\circ,$ which corresponds to the sides exposed to shear flow, 
and the minima are around $150^\circ$ and $330^\circ$ corresponding to the protected from shear sides of the target. Both maxima and minima are more pronounced than when chemotaxis is present. 
With chemotaxis, the maxima shift to about $120^\circ$ and $300^\circ,$ which in fact lie on the down flow parts of the target and are indicative of a large number of trajectories pulled towards the target 
even after passing it but ending up in the attractive chemical cloud. The minima in the chemotactic case are right around $0^\circ$ and $180^\circ.$ At high shear rates, the angle distributions with and without 
chemotaxis become aligned, with extremal points of the approach angle probability distribution with chemotaxis pulled towards those without. This is indicative of a more limited ability for the trajectories to come back from behind against the flow. 
Some trace of this ability remains though, since although the variation in the probability distribution grows for both cases, in the case with chemotaxis it is less pronounced - which corresponds to more even distribution
of hitting points on the surface of the target. 

}

%If one zooms in, we have the following
  % 17
}
% observe increasing shear rate first shortens the average hitting time of a single sperm. Then the hitting time starts to increase and converges to analytic 1D hitting time as shear rate becomes larger. Moreover, the optimal setting is when shear rate is $0.3 \ (4s)^{-1}$.

\subsection{Chemotactic maximal speed vs sensitivity}

%One observes that the expected first hitting time undergoes a significant decrease when there are both shear and chemical attraction. However, as the shear rate increases, the expected first hitting time first drops and then increases back to the $1$-D expected first hitting time. The distance between %the lowest point and the $1$-D hitting time is $280.5\  (4s)$. In Figure \ref{Fig:Fig_80}, we make a similar plot with torus size $80\  (0.1mm)$. The distance between the lowest point and the $1$-D hitting time is $1150.17\  (4s)$. In Figure  \ref{Fig:Fig_200}, the numerical experiment is done on torus %of size $200 \ (0.1mm)$. The distance between the lowest point and the $1$-D hitting time is $2220.5\  (4s)$.

We have also explored the role of two key parameters in our chemotaxis model: chemotactic sensitivity and maximal speed cutoff $\|\varphi\|_\infty.$ For these simulations, we set
shear rate to zero. The main conclusion we can draw from these simulations is that increased sensitivity appears to be more important for reduction of the expected hitting time than
maximal chemotactic speed cutoff.

\begin{figure}[htb!]
\centering
\begin{subfigure}[b]{0.7\textwidth}
   \includegraphics[width=1\linewidth]{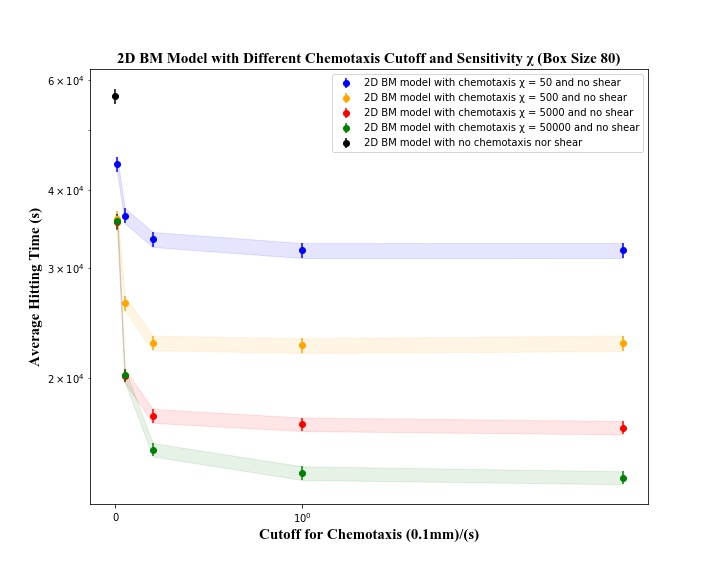}
   \caption{}
   \label{Fig:chemotaxis_sensitivity}
\end{subfigure}

\begin{subfigure}[b]{0.7\textwidth}
   \includegraphics[width=1\linewidth]{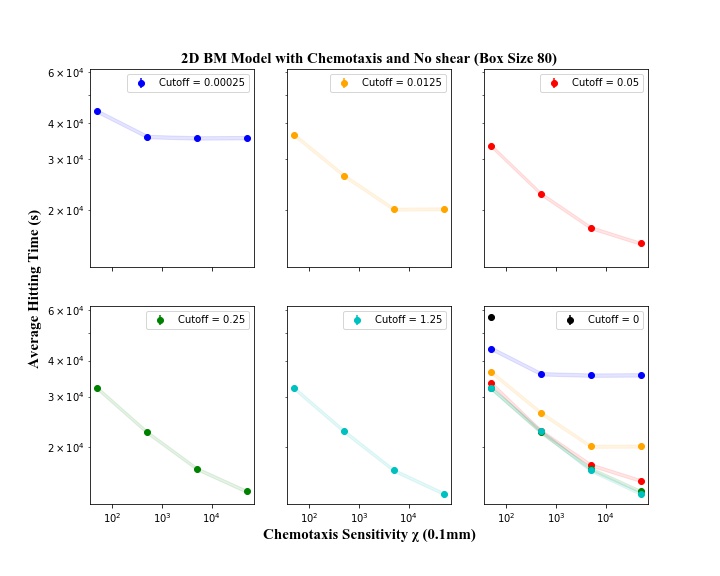}
   \caption{}
   \label{Fig:chemotaxis_sensitivity2}
\end{subfigure}

\caption[]{Average first hitting time with varying chemical cut-off $\varphi$ and chemical sensitivity $\chi$. Shear flow is not present.}
\end{figure}

In the Figure \ref{Fig:chemotaxis_sensitivity}, we fix the size of the torus to be $80 \ (0.1mm)$ and explore the relationship between the average first hitting time and the maximal chemotactic speed cut-off $||\varphi||_\infty$ for different values
of chemical sensitivity. We observe that the most of reduction in the expected hitting time happens already before reaching the maximal chemotactic speeds  \textcolor{black}{$ \sim 0.05-0.1\ (0.1mm/s)$}. The additional improvement throughout the tested speed range
of up to $ 0.125mm/s$ is marginal. It should be noted though that even marginal improvement may be important when there is active competition between different agents.
On the other hand, Figure \ref{Fig:chemotaxis_sensitivity2} shows that increase in chemical sensitivity throughout the whole tested range continues to endow significant advantage to the agents with all but the smallest maximal chemotactic speeds.

\section{Mathematical Analysis: Main Results}
%\subsection{Analysis of the Average First Hitting Time}
Throughout this section, we assume the same setting as before: the search takes place on a torus $L\Torus^2= [0,L)^2.$ The target is a disc of radius $\delta$ located at $(L/2,L/2).$
The agent performs target search in shear flow aided by chemotaxis (of the same form as described in \eqref{EQ:chemical}). The only difference with the setting of numerical experiments is
that we will not in general assume a certain starting point for the search, but instead will consider expected hitting times with arbitrary initial position.
We also choose not to normalize any parameters, and carry $\delta$ and $\nu$ through the estimates.

%\subsection{Large shear rate limit without chemotaxis}

Let us first consider the case of large $A$ limit when chemotaxis is not present (and the box size $L$ is fixed).
One expects that strong shear effectively reduces one dimension. The result we state below is in the spirit of Freidlin-Wentzel theory \cite{FreidlinWentzell},
is likely known and is certainly in the folklore. However we could not find a convenient reference to quote and include the proof for the sake of completeness.

When no chemical aggregation is present, i.e., $\chi=0$, the SDE for the searching agent is simple, i.e.,
\begin{align}\label{Passive_transport_SDE}
\left(\begin{array}{rr}dX_t\\dY_t\end{array}\right)=&\left(\begin{array}{cc} {A} u(Y_t)\\0\end{array}\right)dt+\sqrt{2\nu}\left(\begin{array}{cc}d B_t^{(1)}\\ dB_t^{(2)}\end{array}\right),\\
(X_t,Y_t)=&(x_0,y_0).
\end{align}
Let $\mathbb{E}^{(x_0,y_0)}T_{2D}^A$ be the expected time of the process \eqref{Passive_transport_SDE} on torus $\Torus^2=[0,L)^2$ starting at $(x_0,y_0)$
to hit the target $B((L/2,L/2);\delta)$, namely,
\begin{align}\label{Defn_T_2D_A}
\mathbb{E}^{(x_0,y_0)}T_{2D}^A=\mathbb{E}^{(x_0,y_0)}\min_t\{t|(X_t, Y_t)\in B((L/2,L/2);\delta), \quad  (X_t,Y_t)\text{ solves the SDE }\eqref{Passive_transport_SDE}\}.
\end{align}
To capture the large shear behavior of the averaged first hitting time, we define the one-dimensional first hitting time
\begin{align}\label{Defn_T_1D}
T_{1D}=\min_t\left\{t\bigg|Y_t\in \left[\frac{L}{2}-\delta, \frac{L}{2}+\delta\right],\, (X_t,Y_t)\text{ solves \eqref{Passive_transport_SDE}. }\right\}. %Y_0=y_0
\end{align}
As the shear strength $A$ increases, one expects that the searching agent traverses the horizontal direction fast, and $\mathbb{E}T_{2D}^A$ approaches $\mathbb{E}T_{1D}$.

%In the appendix, we present a proof of the following theorem
\begin{theorem}\label{thm:First_hitting_time_passive_transport}
Consider the equation \eqref{Passive_transport_SDE}.
Suppose the shear profile $u(y)\in C^2(\Torus)$ is such that $u(L/2)=0$.
Moreover, assume that %the absolute value $|u|$ is bounded from below in the sense that
\begin{align}
\label{Strictly_Monotone}\min_{y\in[L/2-2\delta,L/2+2\delta]}|u'(y)|\geq u_d>0.%\min_{y\in[\frac{3L}{8}, \frac{5L}{8}]}
\end{align}
%\begin{align}\label{u_m_lower_bound}
%\min_{y\in [\frac{L}{2}-2\delta,\frac{L}{2}-\frac{\delta}{2}]\cup[\frac{L}{2}+\frac{\delta}{2}, \frac{L}{2}+2\delta]}|u(y)|\geq u_m>0.
%\end{align}
%Further assume that the initial $y$-position of the agent is away from the level of the egg zone, i.e.,  $y_0\notin [L/2-\delta, L/2+\delta]$.
Then the average first hitting time $\mathbb{E}^{(x_0,y_0)}T_{2D}^A$ \eqref{Defn_T_2D_A} approaches $\mathbb{E}^{y_0}T_{1D}$ \eqref{Defn_T_1D}, i.e.,
\begin{align}
\lim_{A\rightarrow \infty}\mathbb{E}^{(x_0,y_0)}T_{2D}^A=\mathbb{E}^{y_0}T_{1D}.
\end{align}
\end{theorem}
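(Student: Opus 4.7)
My plan is to establish matching lower and upper bounds on $\mathbb{E}^{(x_0,y_0)} T_{2D}^A - \mathbb{E}^{y_0} T_{1D}$. The lower bound is immediate: since the target disk $B((L/2,L/2); \delta)$ sits inside the horizontal strip $\{y \in [L/2-\delta, L/2+\delta]\}$ and the $Y$-component of \eqref{Passive_transport_SDE} is exactly the one-dimensional Brownian motion defining $T_{1D}$ in \eqref{Defn_T_1D}, the pathwise coupling (same $B_t^{(2)}$) gives $T_{2D}^A \geq T_{1D}$, hence $\mathbb{E}^{(x_0,y_0)} T_{2D}^A \geq \mathbb{E}^{y_0} T_{1D}$.

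For the upper bound, write $u^A(x,y) := \mathbb{E}^{(x,y)} T_{2D}^A$ and apply the strong Markov property at $T_{1D}$: for $y_0$ outside the strip,
\[
\mathbb{E}^{(x_0,y_0)} T_{2D}^A \;=\; \mathbb{E}^{y_0} T_{1D} + \mathbb{E}\bigl[u^A(X_{T_{1D}}, Y_{T_{1D}})\bigr],
\]
with $Y_{T_{1D}} \in \{L/2 \pm \delta\}$ almost surely by continuity of $Y_t$. The theorem therefore reduces to the key estimate
\[
M_A \;:=\; \sup_{x \in \Torus,\, y \in [L/2-\delta,\, L/2+\delta]} u^A(x,y) \;\xrightarrow[A\to\infty]{}\; 0,
\]
which also covers starting points $y_0$ inside the strip (where $\mathbb{E}^{y_0} T_{1D} = 0$). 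The mechanism behind $M_A \to 0$ is as follows: starting at $(x, y_0)$ with $y_0$ in the strip, the Brownian path $Y_t$ almost surely contains, in any fixed window $[0,\tau]$, subintervals on which $|Y_t - L/2| \geq \delta/4$ and $Y_t$ stays strictly inside the strip; by \eqref{Strictly_Monotone} together with $u(L/2)=0$, on such a subinterval the shear drift obeys $|A u(Y_t)| \geq A u_d \delta/4$, which for large $A$ moves $X_t$ (monotonically in drift) across the torus on a length scale $\ell_A \sim L/(A u_d \delta) \to 0$. During any full winding of $X_t$ around $\Torus$, $X_t$ crosses the line $\{x=L/2\}$ while $Y_t$ remains inside the open strip, and $(L/2, Y_t)$ automatically lies in $B$.

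To make this rigorous I would establish the quantitative statement that, for every fixed $\tau > 0$,
\[
\epsilon_A(\tau) \;:=\; \sup_{(x,y_0) \in \Torus \times [L/2-\delta,\, L/2+\delta]} P^{(x,y_0)}(T_{2D}^A > \tau) \;\xrightarrow[A\to\infty]{}\; 0.
\]
Combined with the strong Markov inequality $M_A \leq \tau + \epsilon_A(\tau)(C + M_A)$, where $C$ uniformly controls $\sup_z \mathbb{E}^z T_{2D}^A - M_A$ via $\sup_y \bar v(y)$ (with $\bar v(y) := \mathbb{E}^y T_{1D}$) plus a Brownian exit-time estimate for the strip, one obtains $\limsup_{A \to \infty} M_A \leq \tau$ for every $\tau > 0$, and hence $M_A \to 0$. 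The main obstacle is the uniform-in-starting-point bound $\epsilon_A(\tau) \to 0$: one must show that typical Brownian $Y$-paths on $[0,\tau]$ contain, with probability tending to one, a favorable subinterval of length exceeding $\ell_A$, and control the conditional law of $X_t$ given the $Y$-path to verify that the winding of $X_t$ through $L/2$ does occur despite the Gaussian perturbation from $\sqrt{\nu}B^{(1)}$. A cleaner alternative is a Freidlin-Wentzel averaging argument on the PDE $\tfrac{\nu}{2}\Delta u^A + Au(y)\partial_x u^A = -1$ with $u^A|_{\partial B}=0$, showing that the fast $x$-mixing induced by $Au(y)\partial_x$ forces $u^A$ to converge to the $x$-independent effective solution $\bar v(y)$.
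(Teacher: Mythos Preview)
Your lower bound via the pathwise coupling $T_{2D}^A \geq T_{1D}$ and your strong Markov reduction to $M_A := \sup_{x,\,y\in[L/2-\delta,L/2+\delta]} u^A(x,y) \to 0$ are exactly the framework the paper uses. The iteration inequality $M_A \leq \tau + \epsilon_A(\tau)(C+M_A)$ is also sound, since from any point $z$ the $Y$-component is pure Brownian motion and hence $u^A(z) \leq \sup_y \bar v(y) + M_A$.

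The genuine gap is the step you yourself flag: you do not prove $\epsilon_A(\tau)\to 0$. Your heuristic (find, on $[0,\tau]$, a subinterval of length $\gtrsim L/(Au_d\delta)$ on which $Y_t$ stays in an annular band like $[L/2+\delta/4,\,L/2+\delta)$, then argue $X_t$ winds once around the torus) is correct in spirit, but making it uniform in the starting point is delicate precisely at the boundary $y_0 = L/2\pm\delta$, where the Brownian path may spend little time inside the strip on $[0,\tau]$; and the conditional control of the $X$-noise during the winding also has to be written out. None of this is wrong, but it is not done.

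The paper closes this gap differently and more directly. Instead of fixed $\tau$, it introduces $A$-dependent strips $S^\pm$ of width $2A^{-1/3}$ lying \emph{entirely inside} the target $y$-range, with center levels $y^\pm = L/2\pm\delta\mp A^{-1/3}$. On $S^\pm$ one has $|u|\geq u_m \geq u_d\delta/2$ by \eqref{Strictly_Monotone}, so the shear traversal time of the torus is $2L/(Au_m)$. The reflection principle gives $\mathbb{P}(\text{exit }S^\pm\text{ before traversal}) \lesssim \nu L/(A^{1/3}u_m)$, and if exit occurs, the expected return time to $y^\pm$ is $O(LA^{-1/3})$ by the explicit 1D formula. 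Summing the resulting geometric series yields $\mathbb{E}^{(x_0,y_0)} T_{2D}^A \leq \mathbb{E}^{y_0} T_{1D} + O(A^{-1/3})$. The choice of width $A^{-1/3}$ balances the exit probability against the return time. This strip-plus-iteration argument replaces your fixed-$\tau$ favorable-subinterval analysis, avoids the uniformity issues at the strip boundary, and delivers an explicit rate that your outline does not.
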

%\textcolor{red}{Drop $u'(L/2)>0$. Introduce that we are on the torus of size $L$ and the size of the egg $\delta\ll L$. Drop the condition $y_0\notin [L/2-2\delta,L/2+2\delta]$ and add a line in the proof. We don't have the first trip and the proof is direct. }\textcolor{black}{Drop: `and $u'(L/2) > 0$.'    %I am not sure about dropping the condition on $y_0\notin [L/2-2\delta, L/2+2\delta]$. The speed of the shear is zero at $y=L/2$! So if  $y_0=L/2$, it might take $\sim \nu^{-1}$ to reach $E_\delta$. If we shift the shear such that $u\geq 1/C$ on $y\in[L/2-2\delta, L/2+2\delta$, then it seems that it is %okay? }%\footnote{}
\begin{remark}
In our simulation, $u(y) = \sin(2\pi(y-L/2)/L),$ and {\color{black} $u_d\sim L^{-1}$.} %{\color{black} HS: I have replaced the $u_m \sim \delta/L{ (u_d\sim L^{-1})}$ based on the referee's concern. He thought that the $u_m$ is not defined before.}
%2. The assumption that $y_0\notin [L/2-\delta, L/2+\delta]$ cannot be entirely dropped in our setting (although it can be weakened).
%Note for example that when $y_0=0,$ $T_{1D}=0,$ but the expected hitting time $\mathbb{E}^{(x_0,y_0)}T_{2D}^A$ does not necessarily converge to zero as $A \rightarrow \infty:$
%think of a case where $u(y)=0$ on $[L/2-\delta/4, L/2+\delta/4].$ One can, of course adopt a different or additional assumption on $u$ that would rule out such behavior,
%but we try to keep statements simple and intuitive.
\end{remark}
\begin{remark}
It is not hard to show that $\lim_{A\rightarrow \infty}T_{2D}^A(\omega)=T_{1D}(\omega)$ almost surely. Indeed, by Blumenthal's 0-1 law, if we let
$\tau=\inf\{t\geq 0:B_t>0\},$ then $\mathbb{P}_0(\tau=0)=1$ see e.g. \cite{Durrett}. So once we reach one of the levels $y -L/2 = \pm \delta$ at $T_{1D}(\omega),$
by almost sure continuity of Brownian motion, with probability one there is an interval of times arbitrarily close to the time $T_{1D}(\omega)$ during which we will dip into the target zone $|y-L/2| < \delta.$
 Thus as $A \rightarrow \infty,$ $T_{2D}^A(\omega)$ will converge to $T_{1D}(\omega).$
%For a reference of this result, we refer to the interested readers to the book ``Brownian motion and Martingales in analysis'' by Richard Durrett or Theorem 2.8 in Aldous online notes. So for arbitrarily small time, the Brownian motion become strictly negative, and the agent will enter the strip %$\{y||y-L/2|<\delta\}$.  From that point on, by taking $A$ large compared to the realization of the Brownian trajectory, we have that the time goes to $T^{1D}(\omega)$ almost surely. }
\vspace{3mm}
\end{remark}

If there exists chemical attraction $(\chi\neq 0)$, the analysis is more involved.
As the magnitude of the shear flow $Au(y)$ increases, the chemical gradient $\na c$ homogenizes in the horizontal direction. Hence the attraction vector field $V$ is also homogenized in the $x$-direction. To explicitly capture the homogenization effect, we consider the elliptic equation on  $(x,y)\in L\Torus\times L\Torus$:%in the bounded channel \times \left[\frac{1}{4}L, \frac{3}{4}L\right]
\begin{align}\label{Chemical_Equation}
-\de c+Au(y)\pa_x c=n-c.
\end{align}
As before, we assume that the  target density $n$ is stationary and its support is localized near the center of the torus, i.e.,
\begin{align}
\{y|(x,y)\in \mathrm{support}\{n\}\}= B_\delta(L/2,L/2) % \left[\frac{L}{2}-\delta, \frac{L}{2}+\delta\right].
\label{vertical_support_of_n}
\end{align}
Moreover, the shear flow $u(y)$ will be assumed to be strictly monotone in the neighborhood of the support of $n$, i.e., satisfy \eqref{Strictly_Monotone}.
%{\color{red} Do we really have to assume that much? It would be nice to have an assumption not in terms of $L$}

Next we introduce the horizontal-homogenization. Functions $f$ on $L\Torus\times L\Torus$ can be decomposed into $x$-average $\lan f\ran $ and the remainder $f_\sim$:
\begin{align}\label{x-average_and_remainder_of_chemical}
\lan f\ran(y)=\frac{1}{L}\int_{0}^L f(x,y)dx,\quad f_\sim(x,y)=f(x,y)-\lan f\ran (y).
\end{align}
%\textcolor{red}{Define this notation for general $f$. }
The remainder of the chemical  $c_\sim$ will be  homogenized in the $x$-direction in the sense that $c_\sim,\, \na c_\sim$ decay to zero as $A$ approaches infinity.   The results are summarized in the next theorem.
\begin{theorem}\label{thm:Horizontally_homogenized}
Consider the solutions $c$ to the equation \eqref{Chemical_Equation}. The  shear flow profile $u\in C^2(\Torus)$ has only finitely many critical points and is non-degenerate in the sense that if $u'(y_0)=0$ at a point $y_0$, then $u''(y_0)\neq 0$.
%{\color{red} what does degenerate mean in our context? We never defined it.
%Is it satisfying \eqref{Strictly_Monotone}? Then we should just say so.}
Further assume that the shear profile is strictly monotone near the egg zone \eqref{Strictly_Monotone} and the egg density is localized \eqref{vertical_support_of_n}. Then the chemical density $c_\sim$ and its derivatives %$\na c_\sim$
up to the second order are approaching zero as $A$ approaches infinity, i.e.,
\begin{align}
||\pa_x^{i}c_{\sim}||_\infty\leq \frac{C|\log A|||\pa_x^{i}n_{\sim}||_\infty}{A^{2/3}},\quad ||\pa_x^{j}\pa_y c_{\sim}||_\infty\leq \frac{C|\log A|^{2}||\pa_x^j\na n_{\sim}||_\infty}{A^{1/6}},\quad i\in\{0,1,2\},\, j\in\{0,1\}.\label{Quantitative_Horizontally_homogenization}
\end{align}% Due to the application of the new results in \cite{AlbrittonBeekieNovack21}, we can improve the log.
Here the constant $C$ depends on the parameters $\nu,\, L$.
%{\color{red} please check and restate. Right now there is no result for $c_\sim.$}
\end{theorem}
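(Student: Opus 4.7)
The plan is to pass to the $x$-mean-zero component of $c$ and then Fourier-decompose in $x$, reducing the 2D homogenization statement to a family of one-dimensional resolvent estimates. Taking the $x$-average of \eqref{Chemical_Equation} and observing that $\langle u(y)\partial_x c\rangle=u(y)\langle \partial_x c\rangle=0$, the mean $\langle c\rangle$ satisfies $-\partial_y^2\langle c\rangle+\langle c\rangle=\langle n\rangle$; subtracting then gives
\[
-\Delta c_\sim+c_\sim+Au(y)\partial_x c_\sim=n_\sim.
\]
Expanding $c_\sim=\sum_{k\ne0}\hat c_k(y)e^{i\xi_k x}$ with $\xi_k=2\pi k/L$, each Fourier coefficient satisfies the 1D ODE on $L\Torus$,
\[
-\hat c_k''+(1+\xi_k^2)\hat c_k+iA\xi_k u(y)\hat c_k=\hat n_k,
\]
and, crucially, $\hat n_k$ is supported in $[L/2-\delta,L/2+\delta]$, where $u$ is strictly monotone by \eqref{Strictly_Monotone}.

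The heart of the proof is a Green's-function estimate for this 1D operator, uniform in the large parameter $M:=A|\xi_k|$. Let $G_k(y,y')$ be its Green's function on $L\Torus$, so that $\hat c_k(y)=\int G_k(y,y')\hat n_k(y')\,dy'$. Two mechanisms combine. First, the real shift $1+\xi_k^2\ge 1$, together with the coercive imaginary potential, forces off-diagonal decay of $G_k$; this is what makes the nondegenerate critical points of $u$ sitting away from the egg zone contribute only an $O(1)$ factor via localized energy estimates. Second, for $y'$ in the egg zone I use the monotonicity hypothesis to write $u(y)=u(L/2)+u'(L/2)(y-L/2)+O((y-L/2)^2)$, and the substitution $y-L/2=M^{-1/3}z$ turns the principal operator into a perturbation of the Airy operator $-\partial_z^2+iu'(L/2)z$. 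Standard Airy estimates then give uniform control on its Green's function and yield
\[
\|G_k(\cdot,y')\|_{L^\infty(L\Torus)}\le \frac{C\,|\log M|}{M^{2/3}}
\]
for $y'$ in the egg zone, with $C$ depending only on $L$ and $u$. Convolution with $\hat n_k$ produces $\|\hat c_k\|_\infty \le C|\log M|M^{-2/3}\|\hat n_k\|_\infty$.

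To obtain the 2D $L^\infty$ bound I then sum the Fourier series, splitting at a frequency cutoff $|k|\le K(A)$: for low modes I use the Airy bound above, while for high modes the trivial elliptic bound $\|\hat c_k\|_\infty\lesssim \xi_k^{-2}\|\hat n_k\|_\infty$ is used; optimizing $K(A)$ against the summability of $\hat n_k$ provided by $\|\partial_x^i n_\sim\|_\infty$ produces the single $|\log A|$ factor in the first estimate of \eqref{Quantitative_Horizontally_homogenization}. For the mixed derivative, the ODE itself gives $\|\hat c_k''\|_2\lesssim M\|\hat c_k\|_2+\|\hat n_k\|_2\lesssim M^{1/3}\|\hat n_k\|_2$, and the 1D interpolation $\|\hat c_k'\|_2\le \|\hat c_k\|_2^{1/2}\|\hat c_k''\|_2^{1/2}$ yields $\|\hat c_k'\|_2\lesssim M^{-1/6}\|\hat n_k\|_2$. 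After 1D Sobolev embedding and summation this becomes the advertised $A^{-1/6}$ rate, with the second logarithm arising from the same frequency-cutoff balance applied at one less degree of regularity, which is precisely why the right-hand side of the gradient estimate uses $\|\partial_x^j\nabla n_\sim\|_\infty$ rather than $\|\partial_x^j n_\sim\|_\infty$.

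The hardest part will be the uniform 1D Green's-function estimate in the second step. The difficulty is twofold: first, showing that the nondegenerate critical points of $u$ away from the egg zone do not spoil the Airy rate $M^{-2/3}$ (which naively would be degraded to $M^{-1/2}$ near a quadratic turning point of $u$) requires careful localization combined with the exponential off-diagonal decay; second, the Airy rescaling must remain valid uniformly in the shift parameter $(1+\xi_k^2)/M^{2/3}$, and it is precisely the balance between the Airy-scale localization $M^{-1/3}$ and the macroscopic length $L$ that forces the logarithmic loss and dictates the choice of the frequency cutoff $K(A)$ in the assembly step.
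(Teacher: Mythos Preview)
Your resolvent-in-$x$-Fourier approach is genuinely different from the paper's, and the central insight---that $\hat n_k$ is supported where $u$ is strictly monotone, so one should be able to access the Airy rate $M^{-2/3}$ rather than the degenerate $M^{-1/2}$---is exactly right. There are, however, two real problems.

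First, a correctable slip: the pointwise Airy Green's function obeys $\|G_k(\cdot,y')\|_{L^\infty}\sim M^{-1/3}$, not $M^{-2/3}$ (rescale $y-y'=M^{-1/3}z$ and track the $\delta$). What scales like $M^{-2/3}$ is $\sup_y\int|G_k(y,y')|\,dy'$, and it is this $L^1$ bound in the source variable that yields $\|\hat c_k\|_\infty\lesssim M^{-2/3}\|\hat n_k\|_\infty$. Second, and more seriously, your Fourier reassembly does not close at the stated rate. With $\|\hat c_k\|_\infty\lesssim (A|\xi_k|)^{-2/3}|\log A|\,\|\hat n_k\|_\infty$ for low modes and the trivial $|\xi_k|^{-2}\|\hat n_k\|_\infty$ for high modes, summing and optimizing the cutoff $K$ gives at best $\|c_\sim\|_\infty\lesssim A^{-1/2}|\log A|\,\|n_\sim\|_\infty$: the series $\sum_k|\xi_k|^{-2/3}$ diverges, you pay $K^{1/3}$ for it, and balancing against $K^{-1}$ forces $K\sim A^{1/2}$. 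You can recover $A^{-2/3}$ by trading a derivative ($\|\hat n_k\|_\infty\lesssim|\xi_k|^{-1}\|\partial_x n_\sim\|_\infty$), but that is not the theorem as stated.

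The paper sidesteps both issues by working in the time domain: write $c_\sim=\int_0^\infty e^{t\mathcal L}n_\sim\,dt$ and bound the semigroup directly in $L^\infty\to L^\infty$ (the $L^2$ enhanced-dissipation estimates of Bedrossian--Coti~Zelati, Wei, et al.\ are upgraded to $L^\infty$ via Nash and duality, so no Fourier sum is ever taken). The localization of $n_\sim$ is used differently: for short times $t\le A^{-1/4}$ one replaces $u$ by a Lipschitz profile $u_1$ equal to $u$ near the egg zone but with $|u_1'|\ge c>0$ globally, and shows $\|(u-u_1)\partial_x e^{t\mathcal L}n_\sim\|_\infty$ is exponentially small because the heat kernel has not yet transported mass from the egg zone to the critical points. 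This unlocks the Lipschitz rate $A^{2/3}$ on $[0,A^{-1/4}]$; for larger $t$ the nondegenerate rate $A^{1/2}$ already suffices. For $\partial_y c_\sim$ the paper does not interpolate: it differentiates the equation, treats $Au'\partial_x c_\sim$ (now of size $A^{1/3}|\log A|$) as forcing, and applies the $A^{-1/2}$ rate to get $A^{-1/6}$.
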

%\begin{remark}To derive the same homogenization estimates on the torus $L\Torus\times [0,L]$, one can apply even reflections of the data $(n,c)$ and the shear profile $u(y)$ around the horizontal lines $y=\frac{1}{4}L$ and $y=\frac{3}{4}L$.
%\end{remark}

Now we consider the convergence of the average first hitting time in the full generality ($\chi\neq 0$). We use the notation $T^{A;\chi}_{2D}$ to denote the first hitting time of the SDE \eqref{SDE_full}, namely,
\begin{align}\label{Defn_T_2D_A_chi}
T_{2D}^{A;\chi}=\min_t\{t|(X_t, Y_t)\in B((L/2,L/2);\delta), \quad  (X_t,Y_t)\text{ solves the SDE }\eqref{SDE_full}\}.
\end{align}
Next we define the effective $1D$ system
\begin{align}\label{1D_effective_SDE}
dY_t=V_{\mathrm{eff}}^{(2)}dt+\sqrt{\nu}dB_t,\quad V_{\mathrm{eff}}^{(2)}=\varphi(\chi|\pa_y \lan  c\ran|)\frac{\pa_y \lan c \ran}{|\pa_y \lan c\ran|},\quad Y_0=y_0.
\end{align}
Observe that $\lan c \ran (y)$ solves a simple PDE
\[ \partial^2_y \lan c \ran = \lan n \ran - \lan c \ran. \]
We define the $1D$-first hitting time as follows
\begin{align}\label{T_1D_chi}
T_{1D}^\chi=\min_t\left\{t\bigg|Y_t\in \left[\frac{L}{2}-\delta, \frac{L}{2}+\delta\right],\, Y_t \text{ solves \eqref{1D_effective_SDE}. } \right\}.
\end{align}
The value of $\mathbb{E}^{y_0}T_{1D}^\chi$ can be calculated using Dynkin's formula and integration factor method; we will outline this computation in Section \ref{Sec:Proof_thm_3}. %{\color{red} make an explicit more precise reference?}
\begin{theorem}\label{theorem_first_hitting_time_full_system}
Consider the dynamics \eqref{SDE_full}.  Assume that the density $n\in C^2(\Torus^2)$, and the cutoff function ${\varphi(\cdot)}\in C^2(\rr_+)$ vanishes at the origin, i.e., $\varphi(0)=0$. %\textcolor{black}{(How much regularity to assume in order to get the convergence of the semigroup?)}
The shear profile $ u\in C^2(\Torus)$ satisfies all assumptions in Theorem \ref{thm:First_hitting_time_passive_transport} and \ref{thm:Horizontally_homogenized}.
%(\eqref{u_m_lower_bound}, \eqref{vertical_support_of_n}, \eqref{Strictly_Monotone}). %smooth ,\infty   Furthermore, the conditions  are fulfilled.
%Let the initial $y$-position of the agent satisfy $y_0\notin [L/2-2\delta, L/2+2\delta]$.
Then the expected first hitting time $\mathbb{E}^{(x_0,y_0)}T_{2D}^{A;\chi}$  \eqref{Defn_T_2D_A_chi} approaches the expected $1D$-first hitting time $\mathbb{E}^{y_0}T_{1D}^\chi$ \eqref{T_1D_chi} as the magnitude $A$ tends to infinity, i.e.,
\begin{align}
\lim_{A\rightarrow\infty}\mathbb{E}^{(x_0,y_0)}T_{2D}^{A;\chi}=\mathbb{E}^{y_0}T_{1D}^\chi, \quad \forall (x_0,y_0)\in (L\Torus)^2.
\end{align}
\end{theorem}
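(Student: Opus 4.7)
My plan is to reformulate both hit-time expectations as solutions of elliptic boundary-value problems via Dynkin's formula, use Theorem~\ref{thm:Horizontally_homogenized} to handle the drift in the $A\to\infty$ limit, and combine this with a chimney-style argument in the spirit of Theorem~\ref{thm:First_hitting_time_passive_transport}. Let $u^A(x,y):=\mathbb{E}^{(x,y)}T_{2D}^{A;\chi}$, which satisfies
\[
L_A u^A = -1 \;\text{ on }\; (L\Torus)^2\setminus B\bigl((L/2,L/2),\delta\bigr),\qquad u^A\big|_{\partial B}=0,
\]
with $L_A := \tfrac{\nu}{2}\Delta + (Au(y)+V^{(1)})\partial_x + V^{(2)}\partial_y$. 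Let $w(y):=\mathbb{E}^y T_{1D}^\chi$ solve $\tfrac{\nu}{2}w'' + V_{\mathrm{eff}}^{(2)} w' = -1$ on $\Torus\setminus[L/2-\delta,L/2+\delta]$ with $w(L/2\pm\delta)=0$, extended by zero inside the strip. The target is $u^A(x_0,y_0)\to w(y_0)$.

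I would first promote the homogenization of $c$ to a homogenization of the drift $V$. The map $\Phi(z):=\varphi(\chi|z|)z/|z|$, defined to be $0$ at $z=0$, is globally Lipschitz on $\rr^2$ since $\varphi\in C^2$ with $\varphi(0)=0$ (so $\Phi(z)=\chi z+O(|z|^2)$ near the origin). Writing $\nabla c = (0,\partial_y\langle c\rangle) + \nabla c_\sim$, Theorem~\ref{thm:Horizontally_homogenized} gives $\|\nabla c_\sim\|_\infty\to 0$, yielding a decomposition $V = V_{\mathrm{eff}} + V_{\mathrm{err}}$ with $V_{\mathrm{eff}}(y) = (0,V_{\mathrm{eff}}^{(2)}(y))$ and $\|V_{\mathrm{err}}\|_\infty = O(\|\nabla c_\sim\|_\infty)\to 0$.

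Next, I split $T_{2D}^{A;\chi} = \tau + \widetilde T$, where $\tau:=\inf\{t:Y_t\in[L/2-\delta,L/2+\delta]\}$. Applying Dynkin's formula to the $x$-independent function $w$ up to the stopping time $\tau$, noting $L_A w = \tfrac{\nu}{2}w'' + V^{(2)} w' = -1 + V_{\mathrm{err}}^{(2)} w'$ on the complement of the strip and $w(Y_\tau)=0$, I obtain
\[
\mathbb{E}^{(x_0,y_0)}\tau \;=\; w(y_0) \;+\; \mathbb{E}^{(x_0,y_0)}\!\int_0^\tau V_{\mathrm{err}}^{(2)}(X_t,Y_t)\,w'(Y_t)\,dt.
\]
The bound $\mathbb{E}\tau\le C(L,\nu,\|\varphi\|_\infty)$ uniform in $A$ follows by comparing $Y_t$ with a 1D diffusion whose drift is the worst-case $\pm\|\varphi\|_\infty$, and $\|w'\|_\infty<\infty$ from the explicit integrating-factor formula for $w$. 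Together with $\|V_{\mathrm{err}}\|_\infty\to 0$ this yields $\mathbb{E}^{(x_0,y_0)}\tau\to w(y_0)$.

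The main obstacle is the chimney estimate $\mathbb{E}\widetilde T\to 0$. Once $Y_t$ first enters the strip at $y = L/2\pm\delta$, the strict monotonicity hypothesis $|u'(y)|\ge u_d$ on $[L/2-2\delta,L/2+2\delta]$ gives $|Au(y)|\gtrsim Au_d|y-L/2|$, so outside a thin layer around the midline the agent completes a horizontal revolution around the torus in time $\sim L/(Au_d|y-L/2|)$, and each revolution has probability $\gtrsim \delta/L$ of intersecting $B$. The vertical diffusion escapes the strip only on the longer scale $\delta^2/\nu$, and the chemotactic vertical drift is bounded by $\|\varphi\|_\infty$ uniformly in $A$ and points \emph{toward} the midline, so it cannot defeat this revolution mechanism for large $A$. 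A bookkeeping argument patterned on the proof of Theorem~\ref{thm:First_hitting_time_passive_transport}---quantifying the number of revolutions before the first strip exit and iterating the geometric hitting estimate over subsequent re-entries---then yields $\mathbb{E}\widetilde T = o(1)$ as $A\to\infty$, which combined with the estimate for $\mathbb{E}\tau$ completes the proof.
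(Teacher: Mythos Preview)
Your approach is correct and the decomposition $T_{2D}^{A;\chi}=\tau+\widetilde T$ matches the paper's. The chimney estimate $\mathbb{E}\widetilde T\to 0$ is handled in the paper exactly as you outline, via the thin-strip iteration from Theorem~\ref{thm:First_hitting_time_passive_transport} with the additional observation $\|V\|_\infty\le\|\varphi\|_\infty$ (so the bounded chemotactic drift is harmless against the $O(A)$ shear).

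Where you genuinely diverge from the paper is in the treatment of $\mathbb{E}\tau$. The paper proves a separate PDE convergence result (Proposition~\ref{pro:elliptic_eq}): the full two-dimensional solution $\mathcal{T}$ of $\tfrac12\nu\Delta\mathcal{T}+V\cdot\nabla\mathcal{T}+Au(y)\partial_x\mathcal{T}=-1$ with Dirichlet data on the strip boundary converges in $L^\infty$ to $\mathcal{T}_{1D}=w$, with explicit rate $|\log A|^2A^{-1/6}$. This is obtained by energy and enhanced-dissipation estimates for $\mathcal{T}_\sim$ on the channel, followed by a 1D integrating-factor comparison of $\langle\mathcal{T}\rangle$ with $\mathcal{T}_{1D}$. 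Your argument sidesteps this machinery entirely: because $w$ is $x$-independent, the dangerous term $Au(y)\partial_x w$ vanishes identically, so Dynkin applied to $w$ yields $\mathbb{E}\tau=w(y_0)+\mathbb{E}\int_0^\tau(V^{(2)}-V_{\mathrm{eff}}^{(2)})w'\,ds$, and only the scalar bound $\|V-V_{\mathrm{eff}}\|_\infty\to 0$ from Theorem~\ref{thm:Horizontally_homogenized} plus an $A$-uniform bound on $\mathbb{E}\tau$ (which you obtain by 1D comparison with constant drift $\pm\|\varphi\|_\infty$, or alternatively bootstrap from the Dynkin identity itself once $\|V_{\mathrm{err}}\|_\infty\|w'\|_\infty<1/2$) is needed. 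Your route is more elementary and self-contained; the paper's route yields the stronger statement of $L^\infty$ convergence of the full elliptic solution with an explicit rate, and is reused for the intermediate ``first trip'' time from $L/2\pm\delta$ to $y^\pm$ in the chimney iteration---though your Dynkin trick would equally dispatch that step.
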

%\textcolor{red}{For any $(x_0,y_0)\in (L\Torus)^2$. Drop the $\varphi'(0)=0$ because in (A.58) we don't need the $\varphi'(0)$. Check whether this assumption is used elsewhere?} \textcolor{black}{Dropped. Checking. }
\begin{remark}
In fact, explicit convergence rate will be derived in  Section  \ref{Sec:Proof_thm_3}.
\end{remark}
%\section{Conclusion}
%In this paper, we carry out numerical experiments and mathematical analysis to understand the average first hitting time of the SDE \eqref{SDE_full}. In the numerical study, we observe the following.
%\begin{itemize}
%\item{Even though the fast shear does not deplete the reaction rate in our numerical study, we still observe non-trivial relation between the average first hitting time and the shear strength. For moderately small shear, the hitting time decreases drastically. For strong enough shear, the average hitting time increases and approaches that of the one-dimensional average hitting time. }
%\item{We observe that increasing the chemical sensitivity $\chi$ is more effective than increasing the aggregation speed $||\varphi||_\infty$ when one tries to decrease the average first hitting time. }
%\end{itemize}
%We prove the following theorem.
%\begin{itemize}
%\item{The average first hitting time approaches the average one-dimensional first hitting time as the shear strength $A$ approaches infinity.}
%\item{The density of the searching agent gets homogenized in the horizontal direction with an enhanced rate.}
%\end{itemize}

%\appendix
%\section{Appendix}
\section{Proof of Theorem \ref{thm:First_hitting_time_passive_transport} }

%\subsection{Proof of Theorem \ref{thm:First_hitting_time_passive_transport} }
%\subsection{Expected First Hitting Times}\label{Sec:1D_2D_time}
%in Dimension Two
%In this section, we estimate the average time for the searching agent to find the egg zone, under the assumption that chemical attraction is not present, i.e., $\chi=0$.

If $\chi=0$, it is well known how to calculate the expected first hitting time $\mathbb{E}^{y_0}T_{1D}$ \eqref{Defn_T_1D}. Since only the $y$-component of the agent's position determines whether the agent hit the target region $\left[\frac{L}{2}-\delta,\frac{L}{2}+\delta\right]$, it is enough to consider the SDE $dY_t=\sqrt{2\nu}B_t^{(2)}$. The agent starts at $y_0,$ and performs the Brownian motion $\sqrt{2\nu}dB_t$ on $[0,L)$ with periodic boundary conditions until it hits the interval $[\frac{L}{2}-\delta,\frac{L}{2}+\delta].$ We can recast this problem equivalently as the exit time from $[-L/2+\delta,L/2-\delta]$ for a Brownian particle starting at $y_0$ (without loss of generality we assume that $y<L/2$). The expected first exit time is well known and can be computed explicitly; we provide a brief sketch of the argument.
First recall the Dynkin formula (\cite{Oksendal}): for $f\in C_0^2$, suppose $\tau$ is a stopping time, $\mathbb{E}^{y_0}[\tau]<\infty$, then
\begin{align}\label{Dynkin}
    \mathbb{E}^{y_0}[f(Y_\tau)]=f(y_0)+\mathbb{E}^{y_0}\left[\int_0^\tau H f(Y_s)ds\right],
\end{align}
where in our case
\begin{align}
    dY_t=\sqrt{2\nu}dB_t,\quad
    Hf= \nu \pa_{yy}f,\quad Y_0=y_0.
\end{align}

%To derive the formula for the average exit time, we define $\tau$ to be the first exit time.
To apply the formula, we consider the  solution $f$ to the partial differential equation:
\begin{align}\label{Poisson_equation}
     {\nu}\pa_{yy}f=-1,\quad f\left(\pm \frac{L}{2} \mp \delta\right)=0.
\end{align}
Combining the equation and the formula \eqref{Dynkin},  we have obtained the relation
\begin{align}
    0=\textcolor{black}{\mathbb{E}^{y_0}[f(Y_\tau)]}=f(y_0)+\mathbb{E}^{y_0}\left[\int_0^\tau H f(Y_s)ds\right]=f(y_0)-\mathbb{E}^{y_0}\left[\tau\right],
\end{align}
which in turn yields that
\begin{align}
    f(y_0)=\mathbb{E}^{y_0}[\tau].
\end{align}
Directly solving the equation yields that
\begin{align}\label{form}
\mathbb{E}^{y_0}T_{1D}= \mathbb{E}^{y_0}[\tau]= f(y_0)=-\frac{1}{2\nu}y^2_0 + \frac{1}{2\nu}\left(\frac{L}{2}-\delta \right)^2.
\end{align}
\ifx
The goal of this section is to consider the large $A$ regime. We will prove the following theorem.
\begin{theorem}
Suppose $u$ is a smooth function such that $u(L/2)=0$ and $u'(L/2) > 0.$ Moreover, we assume that $\delta$ is sufficiently small relative to the scale of variations in $u,$  so that $|u(y)| \geq u_m >0$ for $y \in [L/2 \pm \delta/2,L/2 \pm 2\delta].$
Let $\mathbb{E}^{(x_0,y_0)}[T_{2D}^A]$ be the expected time of the process \eqref{Passive_transport_SDE} on torus $\Torus^2=[0,L)^2$ starting at $(x_0,y_0)$
to hit the target $B((L/2,L/2);\delta).$ Then as $A \rightarrow \infty,$ $\mathbb{E}^{x_0}[T_{2D}^A]$ converges to the one dimensional hitting time $\mathbb{E}^{y_0}T_{1D}$ \eqref{form}. %2\nu^{-1}(\frac{L}{2}-\delta)^2
\end{theorem}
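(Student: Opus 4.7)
The plan is to sandwich $\mathbb{E}^{(x_0,y_0)}[T_{2D}^A]$ between $\mathbb{E}^{y_0}[T_{1D}]$ and $\mathbb{E}^{y_0}[T_{1D}]+o(1)$. Since $(X_t,Y_t)\in B((L/2,L/2),\delta)$ forces $Y_t\in[L/2-\delta,L/2+\delta]$, the pathwise inequality $T_{2D}^A\ge T_{1D}$ holds, so $\mathbb{E}^{(x_0,y_0)}[T_{2D}^A]\ge \mathbb{E}^{y_0}[T_{1D}]$ and the lower bound is immediate. The $y$-coordinate of \eqref{Passive_transport_SDE} is an $A$-independent Brownian motion, so only the horizontal search is $A$-dependent; the upper bound rests on showing that once $Y_t$ enters a slightly thinner strip, the disk is hit in asymptotically negligible additional time.

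To make this precise, I would fix $\delta'\in(0,\delta)$ and let $T'_{1D}$ be the hitting time of the thinner strip $[L/2-\delta',L/2+\delta']$. The strong Markov property at $T'_{1D}$ gives
\[
\mathbb{E}^{(x_0,y_0)}[T_{2D}^A]=\mathbb{E}^{(x_0,y_0)}[T'_{1D}]+\mathbb{E}^{(x_0,y_0)}[h^A(X_{T'_{1D}},Y_{T'_{1D}})],\qquad h^A(x,y):=\mathbb{E}^{(x,y)}[T_{2D}^A],
\]
with $|Y_{T'_{1D}}-L/2|=\delta'$. The upper bound then reduces to showing $\sup_{x}h^A(x,L/2\pm\delta')\to 0$ as $A\to\infty$; one can then send $\delta'\nearrow\delta$ and invoke continuity in $\delta'$ of the explicit 1D formula \eqref{form}.

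To prove the supremum estimate, starting from $(x,L/2+\delta')$ I choose the short window $\tau_A:=2L/(A|u(L/2+\delta')|)=O(A^{-1})$; the hypothesis \eqref{Strictly_Monotone} combined with $u(L/2)=0$ ensures $|u(L/2+\delta')|\ge u_d\delta'$. Doob's maximal inequality yields $\max_{t\le\tau_A}|Y_t-(L/2+\delta')|=O(\sqrt{\nu\tau_A\log A})=o(1)$ except on an event of probability $o(1)$; on the complementary good event, $Y_t$ stays inside $[L/2-\delta,L/2+\delta]$. Taylor expanding $u$ about $L/2+\delta'$ then gives $A\int_0^{\tau_A}u(Y_s)\,ds = Au(L/2+\delta')\tau_A + O(A\nu^{1/2}\tau_A^{3/2}) = 2L+O(A^{-1/2})$, while the $X$-Brownian increment is $O(\sqrt{\nu\tau_A})=O(A^{-1/2})$. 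Hence $X_t$ makes two full sweeps of the torus up to an $o(1)$ fluctuation and, by continuity, enters $[L/2-r,L/2+r]$ with $r:=\tfrac12\sqrt{\delta^2-(\delta')^2}>0$; at such a time $|Y_t-L/2|\le\delta'+o(1)<\sqrt{\delta^2-r^2}$, so $(X_t,Y_t)\in B$. Thus $\mathbb{P}(T_{\mathrm{disk}}\le\tau_A)\to 1$, and combining with an $A$-uniform bound $h^A\le M$ — obtainable via an energy estimate on the elliptic equation $Au(y)\partial_x h^A+\tfrac{\nu}{2}\Delta h^A=-1$, where the shear term is skew-adjoint and drops out after multiplication by $h^A$ and integration — gives $h^A\le\tau_A+M\cdot\mathbb{P}(T_{\mathrm{disk}}>\tau_A)\to 0$.

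The main technical obstacle is the quantitative control of $A\int_0^{\tau_A}u(Y_s)\,ds$ on the time window of shrinking length $\tau_A\sim A^{-1}$: one must ensure that the deterministic drift $Au(L/2+\delta')\tau_A\sim L$ dominates the fluctuation $A\nu^{1/2}\tau_A^{3/2}\sim A^{-1/2}$ coming from the integrated Brownian motion of $Y_t$, and that the $A$-uniform elliptic bound $M$ is genuinely independent of $A$. Both points hinge on the non-degeneracy assumption \eqref{Strictly_Monotone}, which keeps the drift strength non-vanishing near the target and forces $\tau_A$ small enough that the Taylor remainder is negligible.
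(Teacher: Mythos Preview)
Your lower bound and the single-shot sweep argument are correct. The genuine gap is the claimed $A$-uniform bound $h^A\le M$ ``via an energy estimate''. Multiplying the elliptic equation by $h^A$ and integrating over $\mathbb{T}^2\setminus B$ does kill the shear term (the boundary term from $\int Au(y)\partial_x (h^A)^2/2$ vanishes since $h^A=0$ on $\partial B$), and together with the Poincar\'e inequality on the punctured torus you indeed get $\|\nabla h^A\|_{L^2}\le C$ uniformly in $A$. But in two dimensions $H^1\not\hookrightarrow L^\infty$, so this does \emph{not} give the pointwise bound $M$ needed to close $h^A\le \tau_A+M\,\mathbb{P}(T_{\rm disk}>\tau_A)$. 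De Giorgi--Nash--Moser does not help either, since the drift $Au(y)$ has $A$-dependent $L^\infty$ norm and yields no uniform constant.

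The repair is to replace the single shot by a renewal, which is precisely what the paper does. After a failed attempt (probability $q_A\to 0$) simply wait for $Y_t$---a pure Brownian motion, independent of $A$---to return to the level $L/2\pm\delta'$; this return time has expectation bounded by a fixed constant $C_0$. Setting $H:=\sup_x h^A(x,L/2\pm\delta')$ and iterating gives $H\le \tau_A+q_A(C_0+H)$, hence $H\le(\tau_A+q_A C_0)/(1-q_A)\to 0$, with no a priori $L^\infty$ bound required. The paper organises the same renewal differently: it uses strips of \emph{shrinking} width $2A^{-1/3}$ centred on levels $y^\pm=L/2\pm\delta\mp A^{-1/3}$, applies the reflection principle to get $q_A\lesssim A^{-1/3}$, and computes the expected return time $\mathbb{E}[\tau^{\rm out}]\sim LA^{-1/3}$ explicitly via the 1D Dirichlet problem; summing the geometric series yields the explicit rate $\mathbb{E}^{(x_0,y_0)}[T_{2D}^A]\le \mathbb{E}^{y_0}[T_{1D}]+O(A^{-1/3})$. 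Your fixed-$\delta'$ route, once iterated, is correct but forfeits this rate because of the final limit $\delta'\nearrow\delta$.
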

\fi
Now we prove Theorem \ref{thm:First_hitting_time_passive_transport}.
%{\color{red} Please adjust the proof for the new assumption and no exceptional starting points. Need to introduce $u_m$ somewhere, also for further reference.}
\begin{proof}[Proof of Theorem \ref{thm:First_hitting_time_passive_transport}]
For simplicity, we will provide below an argument for the agent starting at $(0,0).$ It can be generalized to $(x_0,y_0)$ with $y_0 \notin [\frac{L}{2}-\delta, \frac{L}{2}+\delta]$ in a straightforward manner. For $y_0\in  [\frac{L}{2}-\delta, \frac{L}{2}+\delta]$, the argument is similar, but extra modifications are required.  We will comment on the adjustments at the end of the proof.
%(H: I have adjusted the position of this paragraph. The reason is that the setup ($S^\pm, u_m$) below are different for $y_0\in[L/2-\delta, L/2+\delta]$.)}

First note that the expected 2D hitting time is larger than the 1D hitting time. This can be seen through considering the $y$-component. If $Y_t$ does not reach the region $ [\frac{L}{2}-\delta, \frac{L}{2}+\delta]$, then the agent does not find the target.
Hence the 2D hitting time is bounded below by the 1D hitting time.

To derive an upper bound, let us focus on the searching strips
\[ S^+ =\left\{(x,y)\bigg|y\in \left[\frac{L}{2}+\delta-2A^{-1/3},\frac{L}{2}+\delta\right]\right\}\,\,\, {\rm and}\,\,\, S^-=\left\{(x,y)\bigg| y \in\left[\frac{L}{2}-\delta,\frac{L}{2}-\delta+2A^{-1/3}\right]\right\}. \]
These two strips are adjacent to the boundary of the target zone $y=\frac{L}{2} \pm \delta$.
Due to our assumptions $u(L/2)=0$ and \eqref{Strictly_Monotone}, and assuming that $A$ is sufficiently large,
for the values of $y$ within the searching strips $S^\pm$ the magnitude of the velocity $u$ has a lower bound. We denote it by
\begin{align}\label{u_m}
u_{m}:=\min_{y\in S^\pm}|u(y)|  \geq \frac{\delta u_d}{2}>0.
\end{align}
%In our simulation, $u_m \sim \delta/L.$
%Similarly, on $S_0^-$, the velocity is bounded from above,  $u(y)\leq -u_m<0$. Further define the following refined searching strip $S^+$ and $S^- $:
%\begin{align}
%S^+ =\left[\frac{L}{2}+\frac{1}{2}\delta -2A^{-1/3},\frac{L}{2}+\frac{1}{2}\delta\right];\\
%S^-=\left[\frac{L}{2}-\frac{1}{2}\delta,\frac{L}{2}-\frac{1}{2}\delta+2A^{-1/3}\right].%7
%\end{align}
Denote the center levels of $S^\pm$ as $y^\pm=\frac{L}{2}\pm \delta \mp A^{-1/3}$ respectively. % And the center level of $S^-$ as $y_c^-=\frac{L}{2}-\frac{1}{2}\delta +A^{-1/3}$.

\begin{figure}[hbt!]
\centering
\includegraphics[scale=.51]{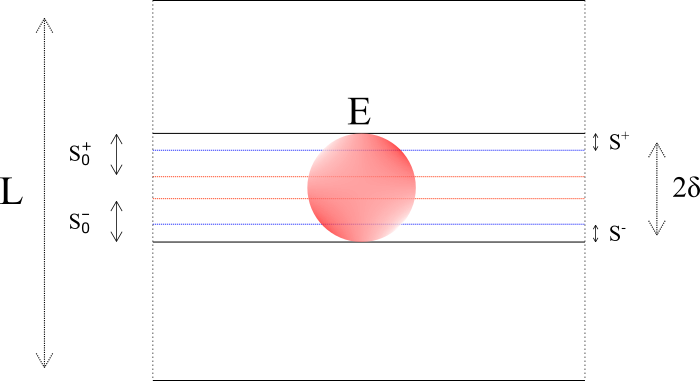}
\caption{Setup}
\end{figure}
%{\color{red} Please make a picture of a round egg. A square egg is weird.}

%Recall that our initial position is $(0,0)$.
The expectation of the first hitting time $\tau_{0}$ of the second component getting from $(0,0)$ to one of the center levels $y^\pm$
can be explicitly computed similarly to \eqref{form}. %is the same as the first hitting time to the egg zone, and it can be explicitly calculated.
Applying the formula analogous to \eqref{form} but replacing $L/2-\delta$ with{ $L/2-\delta+A^{-1/3}$}, % \textcolor{black}\nu^{-1}(\frac{L}{2}-\delta)^2{\color{red} are you sure we want $\nu=1$? in 1D we had $1/2$}
 we obtain that
\begin{align}\label{tau_0}
\mathbb{E}[ \tau_{0}]=\frac{1}{2\nu}\left(\frac{L}{2}-\delta+A^{-1/3}\right)^2.
\end{align}
%Similar calculation yields the average first hitting time $\tau_0$  from $(x_0,y_0)$ to the level $y\in \{y_c^+, y_c^+\}$. Note that the distance between $y_0$ to this level is $D+A^{-1/3}$ and the length of the interval in consideration is $\mathcal{L}=L-\delta-2A^{-1/3}$. Combining these and the %formula \eqref{form} yields that
%\begin{align}
%\mathbb{E}\tau_0=(D+A^{-1/3})(L-\delta+A^{-1/3}-D).\label{tau_0}
%\end{align}
The two hitting times $\eqref{form}_{y_0=0}$, \eqref{tau_0} differ by a small term of order ${O}(A^{-1/3})$.%\mathcal

Once the agent hits one of the center levels of either $S^+$ or $S^-$, we will focus on that specific strip.
Without loss of generality, we assume that the agent first reaches $y^+$. If $A$ is very large, there is a high chance that the agent will hit the target before
it exits the strip $S^+.$ Indeed, if the agent remains for time $\tau^{in} \geq \frac{2L}{A u_m}$ inside $S^+,$ it has enough time to traverse the entire torus and hit the target.
On the other hand, by the reflection principle (see, e.g., equation (3.8) in  \cite{Durrett96}), we have that
\begin{align*}
\mathbb{P}\left(\tau^{in}\leq \frac{2L}{A u_m}\right)=2\mathbb{P}\left(\sqrt{\nu}B_{\frac{2L}{A  u_m}}\geq A^{-1/3}\right).
\end{align*}
Since
%\begin{align*}
$\mathbb{E}[(B_t)^{2}]= t,$
%\end{align*}
the following estimate holds:
\begin{align*}
\mathbb{P}\left(\tau^{in}\leq \frac{2L}{A   u_m}\right)\leq 2\mathbb{P}\left(\sqrt{\nu}B_{\frac{2L}{A  u_m}}\geq A^{-1/3}\right) \leq 2\frac{\mathbb{E}\left[\left(B_{\frac{2L}{A u_m}}\right)^{2}\right]}{\nu^{-1}A^{-2/3}} \leq \frac{4\nu L}{A^{1/3}u_m}.
\end{align*}

Another possibility is that the $\tau^{in}\geq \frac{2L}{Au_m}$, but the agent does not traverse through all the searching strip. The probability of this event is again small. Denoting $X_e$ as the entering position and denoting $d_\rr$ as the distance on the universal cover $\rr$ of the torus $\Torus$, we estimate the probability as follows, %i.e.,
\begin{align*}
\mathbb{P}\left(\tau^{in}\geq \frac{2L}{Au_m},\, d_\rr(X_{\frac{2L}{Au_m}},X_e)\leq L\right)\leq \mathbb{P}\left(\sqrt{\nu}B_{ \frac{2L}{Au_m}}\geq  L\right)\leq \frac{2\nu}{A u_mL}.%\tau^{in} MATH
\end{align*}
We define the event
\begin{align}\label{F_i}
F_i:=\left\{\tau^{in}\leq \frac{2L}{Au_m}\right\}\cup \left\{\tau^{in}\geq\frac{2L}{Au_m}, d_\rr(X_{\frac{2L}{Au_m}},X_e)\leq L\right\}.
\end{align}
We observe that $F_i^c$ guarantees a success in the  $i$-th trip to $S^\pm$. % (H: the original claim that ``the event $F_i$ is the event that the search fails in the $i$-th trip to $S^\pm$" is not quite right.)} Hence the probability of $F_i$ is bounded by
\begin{align}
\label{shortprob}\mathbb{P}(F_i)\leq \mathbb{P}\left(\left\{\tau^{in}\leq \frac{2L}{Au_m}\right\}\right)+\mathbb{P}\left( \left\{\tau^{in}\geq\frac{2L}{Au_m}, d_\rr(X_{\frac{2L}{Au_m}},X_e)\leq L\right\}\right)\leq \frac{4\nu L}{A^{1/3}u_m}+\frac{2\nu}{A u_mL}.%r\cup
\end{align}
%\textcolor{black}{Here we just need that $\mathbb{P}(F_i)<1$.}

Now in the event $F_i$ %(H: delete `that $\tau^{in} \leq \frac{L}{A  u_m}$' because that expression is not quite what we have)},
we wait for time $\tau^{out}$ till the agent gets back either to the level $y^+$ or $y^-$.
%(for simplicity we do not consider the option that it gets to $y^-$ before that, which would also be a favorable to search event).
For simplicity we can ignore the option of reaching $y^-$ and focus just on $y^+.$
Since at the stopping time $\tau^{in},$
the agent is at the boundary of $S^+,$ analogously with the previous 1D argument, we can compute the expectation of time $\tau^{out}$ by solving the boundary
value problem %{\color{red} please double check the coefficient in front of $f''$}
\[ \nu f''(z) =-1, \,\,\, f(z=0)=f(z=L)=0, \]
where $\mathbb{E}[\tau^{out}]=f(A^{-1/3}).$ Solving for $f$ yields
\[ f(z) = \frac{1}{2\nu}(-z^2 + Lz), \]
and thus $\mathbb{E}[\tau^{out}]=\frac{1}{2\nu}( LA^{-1/3}-A^{-2/3}).$ Now we can iterate, and by Markov property we obtain %{\color{black}(H: I have adjusted the $\mathbb {E}[\cdot]$ notation)}
\begin{align*}
\mathbb{E}^{(x_0,y_0)}[T_{2D}^A] \leq & \mathbb{E}[\tau_0]+\mathbb{P}\left(F
_1^c\right)\frac{2L}{A u_m}%\\\tau^{in}\geq \frac{L}{A u_m}&
+\mathbb{P}\left(F_1\right)\left(\mathbb{E}[\tau^{out}]+\mathbb{P}\left(F_2^c\right)\frac{2L}{A u_m}\right)\\%\tau^{in}\leq \frac{L}{A u_m}   \tau^{in}\geq \frac{L}{A u_m}
&+\mathbb{P}\left(F_1\right)\mathbb{P}\left(F_2\right)\left(\mathbb{E}[\tau^{out}]+\mathbb{P}\left(F_3^c\right)\frac{2L}{A u_m}\right)+...\\  %\tau^{in}\leq \frac{L}{A  u_m} \tau^{in}\leq \frac{L}{A u_m}\tau^{in}\geq \frac{2L}{A u_m}
=&\mathbb{E}[\tau_0]+\mathbb{P}\left(F_1^c\right)\frac{2L}{A u_m}+\sum_{i=1}^\infty\left(\prod_{j=1}^{i}\mathbb{P}\left(F_j\right)\right)\left(\mathbb{E}[\tau^{out}]+\mathbb{P}\left(F_{i+1}^c\right)\frac{2L}{A u_m}\right).%\\\tau^{in}\geq \frac{L}{A u_m}&
%\tau^{in}\leq \frac{L}{A u_m}\tau^{in}\geq \frac{L}{A u_m}j
\end{align*}
Using our estimates on $\mathbb{E}[\tau^{out}]$, and  the fact that $\mathbb{P}(F_i)<1$ for sufficiently large $A$ \eqref{shortprob}, we find that
\begin{align*}
\mathbb{E}^{(x_0,y_0)}[T_{2D}^A] \leq &\mathbb{E}[ \tau_{0}] + C\left(\frac{ L}{\nu A^{1/3}}+\frac{2L}{A   u_m}\right)\\
\leq& \mathbb{E}[\tau_0]+O(A^{-1/3}) =  \mathbb{E}^{y_0}[T_{1D}]+O( A^{-1/3}).%$% +\frac{L}{A u_m}
\end{align*}
%\textcolor{red}{$\mathbb{P}(F_i)<1$ is enough to make the argument work. Because $\mathbb{E}\tau^{out}\leq \frac{1}{A^{1/3}}$.}
Thus the upper bound approaches $\mathbb{E}^{y_0}[T_{1D}]$ as $A\rightarrow \infty$.

 Finally, we comment on the case where the starting position is in the egg zone level, i.e.,  $y_0\in[L/2-\delta,L/2+\delta]$. In this case, the $1$-dimensional first hitting time is zero. Hence the goal is to show that the average $2$-dimensional first hitting time converges to zero as $A\rightarrow\infty$. We distinguish between two possible cases: case a) $y_0\in [L/2-\delta,L/2+\delta]\backslash[L/2-A^{-1/5}, L/2+A^{-1/5}]$; case b) $y_0\in [L/2-A^{-1/5}, L/2+A^{-1/5}]$.

In the first case, without loss of generality, we focus on the upper component  $y_0\in(L/2+A^{-1/5}, L/2+\delta]$. We redefine the searching strip to be $S^+:=[y_0-2A^{-1/3} , y_0]$,  and note that the center level of $S^+$ is $y^+:=y_0-A^{-1/3}$. Applying the same argument as before, we have that the average first hitting time from $y_0$ to $y^+$ is bounded above by $\mathbb{E}\tau_0\leq CA^{-1/3}$. Inside the searching strip $S^+$, the assumptions $u(L/2)=0$, and \eqref{Strictly_Monotone} yields that  the absolute value of fluid velocity has positive lower bound if $A$ is large enough, i.e., \begin{align*}
Au_m:=&A\min_{y\in S^+}|u(y)|\geq Au_d(A^{-1/5}-2A^{-1/3})\geq A^{4/5}u_d/2.
\end{align*}  Similarly to the previous argument, we  consider the events $F_i:=\{\tau^{in}\leq \frac{4L}{A ^{4/5}  u_d}\}\cup\{\tau^{in}\geq \frac{4L}{A^{4/5}u_d},\, d_\rr(X_{\frac{4L}{A^{4/5}u_d}},X_e)\leq  L\}$, which are adjustments to definition \eqref{F_i}. Then the probability of $F_i$ can be estimated as follows. First of all,
\begin{align*}
\mathbb{P}\left(\tau^{in}\leq \frac{4L}{A ^{4/5}  u_d}\right)\leq 2\mathbb{P}\left(\sqrt{\nu}B_{\frac{4L}{A^{4/5}  u_d}}\geq A^{-1/3}\right) \leq 2\frac{\mathbb{E}\left[\left(B_{\frac{4L}{A^{4/5} u_d}}\right)^{2}\right]}{\nu^{-1}A^{-2/3}} \leq \frac{8\nu L}{A^{2/15}u_d}.
\end{align*}
Then
\begin{align*}
\mathbb{P}\left(\tau^{in}> \frac{4L}{A^{4/5}u_d},\, d_\rr(X_{\frac{4L}{A^{4/5}u_d}},X_e)\leq L\right)\leq \mathbb{P}\left(\sqrt{\nu}B_{ \frac{4L}{A^{4/5}u_d}}\geq  L\right)\leq \frac{4\nu}{A^{4/5} u_dL}.%\tau^{in} MATH
\end{align*}
Hence,
\begin{align*}
\mathbb{P}(F_i)\leq\frac{8\nu L}{A^{2/15}u_d}+\frac{4\nu}{A^{4/5} u_dL}.
\end{align*}
Thus $\lim_{A\rightarrow \infty}\mathbb{P}(F_i)=0$. Now the same iterative argument as above yields the result
\begin{align*}
0\leq\mathbb{E}^{(x_0,y_0)}[T_{2D}^A]\leq &\mathbb{E}[\tau_0]+\mathbb{P}\left(F_1^c\right)\frac{2L}{A^{4/5} u_d/2}+\sum_{i=1}^\infty\left(\prod_{j=1}^{i}\mathbb{P}\left(F_j\right)\right)\left(\mathbb{E}[\tau^{out}]+\mathbb{P}\left(F_{i+1}^c\right)\frac{2L}{A^{4/5} u_d/2}\right)\\
\leq &C(\nu, L, u_d)A^{-2/15}.
\end{align*}
Therefore, $\lim_{A\rightarrow \infty}\mathbb{E}^{(x_0,y_0)}[T_{2D}^A]=0$.%?

If $y_0\in [L/2-A^{-1/5}, L/2+A^{-1/5}]$, then we define the searching strip to be $S:=[L/2+A^{-1/5}-2A^{-1/3}, L/2+A^{-1/5}]$. The average first hitting time from the starting position to the center level $y^+:=L/2+A^{-1/5}-A^{-1/3}$ is bounded by $\mathbb{E}[\tau_0]\leq CA^{-1/5}$. The speed of the shear inside the searching strip is bounded from below by $Cu_dA^{4/5}$. Now the same argument as above completes the proof.
\ifx\fi

\end{proof}

\section{Proof of Theorem \ref{thm:Horizontally_homogenized}}

%\textcolor{black}{Taylor book, Kato book?}
We first present the enhanced dissipation estimates from the works \cite{BCZ15}, \cite{Wei18} and \cite{ColomboCotiZelatiWidmayer20} adapted to our large torus setting ($|\Torus|=L\gg 1$).

\begin{theorem} Consider solutions $\eta_{\sim}(t,x,y)$ to the passive scalar equations
\begin{align}
\pa_t \eta_{\sim}+Au\left(\frac{y}{L}\right)\pa_x \eta_{\sim}=\nu\de \eta_{\sim},\quad \eta_{\sim}(t=0,x,y)=\eta_{0;\sim}(x,y), \quad\, (x,y)\in \Torus^2\label{passive_scalar_1}
\end{align}
subject to initial data  $\eta_{0;\sim}\in L^\infty(\Torus^2)$ and zero average constraint $\int_{\Torus}\eta_{0;\sim}(x,y)dx=0,\,\forall y\in \Torus$.

\noindent
\textbf{Case a)} Assume that the shear flow profile $u$ is a Lipschitz function with finitely many critical points. Furthermore, if the derivative of the profile exists at a point $y\in\Torus$, then it is strictly bounded away from zero, i.e.,  $|u'(y)|>u_d>0$.
%{\color{red} This is extremely confusing. Strictly speaking, in the periodic case this can hold only if there are some points where the derivative does not exist,
%but then the minimum is undefined. I added a quick fix and a remark later, but may be better to discuss before the theorem what exactly do we mean by statements like this one} {\color{black} Anwser: I am not sure. Finitely many critical points assumption is natural even for the nondegenerate shear flow %setting. So I think making this assumption in the Lipschitz setting is not too bad. If we just do two kinks, then the main theorems can only work for $\sin(y)$-like flows, which is not as general as the current version. May I ask for your opinion?}
If the parameter $A$ is large enough in the sense that $\frac{\nu}{AL}\leq c_0$ for a small constant $c_0=c_0(u)>0$, then there exist  \textcolor{black}{constants $C,\, \kappa >0$ depending only on the shear profile $u$} such that the following enhanced dissipation estimate holds:
\begin{align}\label{ED}
|| \eta_{\sim}( t)||_\infty\leq C||\eta_{ 0;\sim}||_\infty e^{-\kappa \nu^{1/3}A^{2/3}L^{-4/3}|\log \frac{\nu}{AL}|^{-1}t},\quad \forall t\in [0,\infty).
\end{align}

\noindent
\textbf{Case b)} The shear flow profile $u\in C^2(\Torus)$ is non-degenerate in the sense that if $u'(y_0)=0$, then $u''(y_0)\neq 0$. Moreover, there are only finitely many critical points.
 If the parameter $A$ is large enough in the sense that $\frac{\nu}{AL}\leq c_0$ for a small constant $c_0= c_0(u)>0$, then there exist \textcolor{black}{constants $C,\, \kappa >0$ depending only on the shear profile $u$} such that the following enhanced dissipation estimate holds :
\begin{align}\label{ED_non_degenerate}
|| \eta_{\sim}( t)||_\infty\leq C||\eta_{ 0;\sim}||_\infty e^{-\kappa \nu^{1/2}A^{1/2}L^{-3/2} |\log \frac{\nu}{AL}|^{-1}t},\quad \forall t\in [0,\infty).
\end{align}
%\end{itemize} \ the mean-zero constraint
\end{theorem}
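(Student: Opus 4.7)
The plan is to reduce the large-torus equation to the standard unit-torus setting by rescaling, invoke the cited $L^2$-enhanced dissipation estimates of Bedrossian--Coti Zelati, Wei, and Colombo--Coti Zelati--Widmayer, and then rescale back. Concretely, set $\tilde x = x/L$, $\tilde y = y/L$ and $\tilde t = tA/L$, so that $\tilde\eta(\tilde t,\tilde x,\tilde y) := \eta_{\sim}(t,x,y)$ satisfies
\begin{equation*}
\pa_{\tilde t}\tilde\eta + u(\tilde y)\,\pa_{\tilde x}\tilde\eta = \tfrac{1}{2}\tilde\nu\,\tilde\Delta\tilde\eta, \qquad \tilde\nu := \frac{\nu}{AL},
\end{equation*}
on the unit torus, with unit shear amplitude and the zero-$x$-average property preserved. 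Since the hypotheses on $u$ in both Case (a) (Lipschitz profile with $|u'|\geq u_d$ away from finitely many critical points) and Case (b) (non-degenerate critical points of a $C^2$ profile) are stated intrinsically for the profile viewed on the unit torus, they transfer to the rescaled problem verbatim, and the smallness condition $\nu/(AL)\leq c_0$ becomes simply $\tilde\nu\leq c_0$.

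Next, the known $L^2$-to-$L^2$ enhanced dissipation estimates on the unit torus yield, in Case (a),
\begin{equation*}
\|\tilde\eta(\tilde t)\|_{L^2}\leq C\|\tilde\eta(0)\|_{L^2}\exp\bigl(-\kappa\,\tilde\nu^{1/3}|\log\tilde\nu|^{-1}\,\tilde t\bigr),
\end{equation*}
with $C,\kappa>0$ depending only on $u$ (cf.\ \cite{BCZ15}), and in Case (b) the analogous bound with $\tilde\nu^{1/3}$ replaced by $\tilde\nu^{1/2}$ (cf.\ \cite{Wei18,ColomboCotiZelatiWidmayer20}). To upgrade to the $L^\infty\to L^\infty$ estimate asserted by the theorem, I would sandwich the enhanced-dissipation interval between two short parabolic-smoothing windows of fixed rescaled length: an initial $L^\infty\to L^2$ bound (trivial on the unit torus because of finite measure), the bulk $L^2$ decay, and a final $L^2\to L^\infty$ step via Nash-type hypercontractivity for the drift-diffusion semigroup. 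These windows contribute only a universal multiplicative constant that can be absorbed into $C$.

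Finally, inverting the time rescaling $\tilde t = tA/L$ multiplies the exponents by $A/L$, which gives $\tilde\nu^{1/3}\cdot A/L = \nu^{1/3}A^{2/3}L^{-4/3}$ in Case (a) and $\tilde\nu^{1/2}\cdot A/L = \nu^{1/2}A^{1/2}L^{-3/2}$ in Case (b), exactly matching \eqref{ED} and \eqref{ED_non_degenerate}, while the logarithmic factor $|\log\tilde\nu|=|\log(\nu/(AL))|$ is preserved by the scaling. The main obstacle I anticipate is the $L^2\to L^\infty$ upgrade: the smoothing windows must have $O(1)$ length in rescaled time (independent of $\tilde\nu$) so that their cost enters only as a universal multiplicative prefactor and does not spoil the enhanced rate $\kappa$; one also has to check that the cited $L^2$ estimates carry constants depending only on $u$ (and not on $\tilde\nu$), which is indeed how they are stated in the references above.
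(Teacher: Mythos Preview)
Your overall architecture---rescale to the unit torus, invoke the $L^2$ enhanced dissipation from the literature, then upgrade to $L^\infty$---is exactly the paper's strategy. However, the $L^\infty$ upgrade as you describe it has a genuine gap. The Nash-type $L^2\to L^\infty$ smoothing for the drift-diffusion semigroup on the unit torus gives $\|\tilde\eta(\tilde t+\tau)\|_\infty\le C(\tilde\nu\tau)^{-1/2}\|\tilde\eta(\tilde t)\|_2$, so a smoothing window of ``fixed rescaled length $O(1)$'' costs a factor $\tilde\nu^{-1/2}$, which is \emph{not} a universal constant and cannot simply be absorbed into $C$. Your anticipated obstacle is real, and your stated resolution (that the cost is a universal prefactor) is incorrect. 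The paper instead works on an interval of length $\sim \kappa^{-1}d(\tilde\nu)^{-1}|\log\tilde\nu|$, where $d(\tilde\nu)=\tilde\nu^{1/3}$ in case~(a) and $\tilde\nu^{1/2}$ in case~(b): over half this interval the $L^2$ decay $e^{-\kappa_0 d(\tilde\nu)\cdot(\text{length}/2)}$ produces an arbitrarily large positive power of $\tilde\nu$ (by taking $\kappa\ll\kappa_0$), which beats the $(\tilde\nu\cdot\text{length})^{-1/2}$ loss from the smoothing half. This yields a contraction $\|\tilde\eta\|_\infty\mapsto\tfrac12\|\tilde\eta\|_\infty$ on each such interval, and iteration then gives the rate with the $|\log\tilde\nu|^{-1}$ factor. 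This is precisely where that logarithm enters---not in the $L^2$ estimate (your $L^2$ bound should read $e^{-\kappa\tilde\nu^{1/3}\tilde t}$ with no log).

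A second, smaller issue: in Case~(a) the profile is only Lipschitz with finitely many non-differentiability points and $|u'|\ge u_d$ elsewhere. This hypothesis is not literally covered by \cite{BCZ15} (which treats smooth profiles with finite-order vanishing of $u'$), so the paper does not cite but instead proves the $L^2$ estimate from scratch via Wei's resolvent criterion: one shows $\|w_k\|_2\le C\tilde\nu^{-1/3}|k|^{-2/3}\|F\|_2$ for the resolvent equation, splitting the torus into the small-measure set $\{|u-\lambda|\le\tilde\nu^{1/3}|k|^{-1/3}\}$ (whose measure is controlled by the lower bound on $|u'|$) and its complement. You should either supply this argument or find a reference that actually covers the Lipschitz case.
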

\begin{remark}
%1. Note that a Lipschitz function is a.e. differentiable by Rademacher theorem. The minimum in the case a) is taken over all points where the derivative exists. \\
%2.
We will need the estimate \eqref{ED} to derive a sufficiently strong bound on the chemical gradient $||\pa_y c_\sim||_\infty$. We are not able to show that the  chemical gradient converges to zero as $A\rightarrow \infty$ using only estimate  \eqref{ED_non_degenerate}.
\end{remark}
\begin{proof}
We divide the proof into several steps.

\noindent
\textbf{Step \# 1: Rescaling argument. }
If we rescale the variables in \eqref{passive_scalar_1} by setting $X:=\frac{x}{L}$, $Y:=\frac{y}{L}$ and $\tau=\frac{A}{L}t$, we end up with the following:
\begin{align}\label{PS_XY_coordinate}
\pa_\tau\wh \eta_{\sim}+u(Y)\pa_X\wh{\eta}_{\sim}=\wh \nu\de_{X,Y}\wh{\eta}_{\sim}, \quad \int_0^1 \wh \eta_\sim(x,y)dx=0,\quad (X,Y)\in \Torus^2.
\end{align}
Here $\wh \eta(\tau,X,Y):=\eta(t,x,y)$, and $\wh \nu:=\frac{\nu}{AL}$.
Hence if we obtain the following estimates:

\noindent case a)
\begin{align}\label{ED_case_a}
||\wh\eta_{\sim}(\tau)||_{L_{X,Y}^\infty}\leq C ||\wh\eta_{0;\sim}||_{L_{X,Y}^\infty} e^{-\kappa_{0}  \widehat{\nu}^{1/3}|\log\wh\nu|^{-1}\tau },\quad \forall \tau\in[0,\infty),%23
\end{align}

\noindent case b)
\begin{align}\label{ED_case_b}
||\wh\eta_{\sim}(\tau)||_{L_{X,Y}^\infty}\leq C||\wh\eta_{0;\sim}||_{L_{X,Y}^\infty} e^{-\kappa_0\widehat{\nu}^{1/2}|\log \wh \nu|^{-1}\tau},\quad \forall \tau\in[0,\infty)
\end{align}for some universal $\kappa_0,$  then by rescaling back to the original variables, we obtain \eqref{ED} and  \eqref{ED_non_degenerate}.

\noindent
\textbf{Step \# 2: $L^2$-estimates. } Consider the passive scalar equation \eqref{PS_XY_coordinate}.
\ifx on a unit torus $\Torus^2$,
\begin{align}
\pa_\tau \widehat \eta_{\sim}+u(Y)\pa_X \widehat\eta_{\sim}= \widehat{\nu} \de \widehat\eta_{\sim},\quad \widehat{\eta}_{\sim}(\tau=0,X,Y)=\widehat{\eta}_{0;\sim}(X,Y), \quad (X,Y)\in \Torus^2,
\end{align}
where the sizes of the torus and the shear profile is normalized, i.e., $|\Torus|=1,\, ||u||_\infty\leq 1$.
\fi
We will show that if the viscosity $\widehat{\nu}$ is small enough, i.e., $\widehat{\nu}\leq c_0(u)$ for some constant $c_0>0$ depending only on $u$, then the enhanced dissipation estimate holds: in case a),
\begin{align}
||\widehat {\eta}_{\sim}(\tau)||_{L^2}\leq C ||\widehat{\eta}_{0;\sim}||_{L^2} e^{-\kappa_{0}\widehat{\nu}^{1/3}\tau},\quad \forall \tau\in[0,\infty);\label{ED_literature_Lip}
\end{align} in case b),
\begin{align}
||\widehat {\eta}_{\sim}(\tau)||_{L^2}\leq C ||\widehat{\eta}_{0;\sim}||_{L^2} e^{-\kappa_{0}\widehat{\nu}^{1/2}\tau},\quad \forall \tau\in[0,\infty).\label{ED_literature_nondegenerate}
\end{align}
{\color{black}Here the constants $C,\, \kappa_0$ depend only on the shear profile $u$.} The estimate \eqref{ED_literature_nondegenerate} \textcolor{black}{appears in Theorem 1.1 of \cite{AlbrittonBeekieNovack21}}. We also refer the interested readers to \cite{BCZ15} and \cite{Wei18}. %\cite{BCZ15}We recall the enhance dissipation estimate derived in the papers ,  and \cite{AlbrittonBeekieNovack21}.

To prove the \eqref{ED_literature_Lip}, we first consider the mixed $x$-Fourier transform of the passive scalar equation
\begin{align*}
\pa_t \wh\eta_k=&\wh \nu \pa_{YY}\wh \eta_k-\wh \nu |k|^2 \wh \eta_k-iu(Y)k\wh \eta_k=:\mathcal{L}_k\wh\eta_k.
\end{align*}
\textcolor{black}{%To apply the main theorem of \cite{Wei18}, w
We also consider the following resolvent equation associated with $\mathcal{L}_k$:
\begin{align}
-\wh\nu\pa_{YY} w_k+\wh \nu |k|^2 w_k+ik(u(Y)-\lambda)w_k=F. \label{Resolvent}
\end{align}
To prove \eqref{ED_literature_Lip}, we will use the following inequality:
 for $\forall \lambda\in \rr$,
\begin{align}\label{Goal}
||w_k||_2^2\leq C\wh\nu^{-2/3}|k|^{-4/3}||F||_2^2.
\end{align}%\textcolor{red}
The constant $C$ depends only on the shear profile $u$, and is independent of $\lambda,\nu,|k|$. The explicit derivation of the connection between \eqref{Goal} and \eqref{ED_literature_Lip}
is carried out on pages 7-8 of the paper \cite{He21} and here we omit further details, other than note that the main theorem of \cite{Wei18} plays an important role.}
To derive the estimate \eqref{Goal}, we test the equation \eqref{Resolvent} with $\overline{w_k}$ and $-i(u-\lambda)k\overline{w}_k$ and take the real part to obtain the following bounds:
\begin{align}
\wh\nu||\pa_Y w_k||_2^2+\wh\nu|k|^2&||w_k||_2^2\leq ||F||_2||w_k||_2;\label{Real} \\
|k|^2\int_{\Torus}(u -\lambda)^2 |w_k|^2 dY=& -\text{Re}\left(ik\int _\Torus F(u -\lambda) \overline{w_k}dY\right)-\text{Re}\left(ik\wh\nu\int_\Torus \pa_Y w_k\overline{w_k}\pa_Y(u-\lambda)dY\right).
\end{align}%color{red}color{red}
Direct application of H\"older inequality and Young's inequality yields
\begin{align*}
|k|^2||(u-\lambda) w_k||_2^2\leq 4||F||_2^2+\frac {1}{2}|k|^2||(u-\lambda)w_k||_2^2+\wh\nu||u'||_\infty|k| ||\pa_Y w_k||_2||w_k||_2.
\end{align*}
After simplification, we obtain,\begin{align}
||(u-\lambda) w_k||_2^2\leq 8|k|^{-2}||F||_2^2+2\wh\nu ||u'||_\infty|k|^{-1}  ||\pa_Y w_k||_2||w_k||_2.
\label{Imagine_test}
\end{align}
Now we define the following partition of domain
\begin{align}
E:=\{Y||u(Y)-\lambda|\leq \wh\nu^{1/3}|k|^{-1/3}\}, \quad E^c:=\{Y| |u (Y)-\lambda |>\wh\nu^{1/3}|k|^{-1/3}\}.\quad
\end{align}
%{\color{black}Siming: I will delete: ``On the set $E$, if the derivative of the Lipschitz profile $u$ exists, then it is bounded away from zero, i.e., $|u'|\geq \frac{1}{C}>0$.  Moreover, the total number of critical points where the derivative does not exist is finite.''}
We claim that the size of the set $E$ is bounded by $\textcolor{black}{|E|}\leq C(u)\wh\nu^{1/3}|k|^{-1/3}$. Here the constant $C(u)$ depends on the Lipschitz norm of the shear profile, the minimum of $|u'|(y)$ (whenever it exists) and the total number of critical points of $u$.
%{\color{red} Siming, this is false. Imagine a Lip function that is piecewise linear and slope equals one everywhere where it is defined. Given any $\epsilon,$ you can find such Lip function
%that is within some fixed value for all $y$, it just needs to look like a saw. Using this, you can build lots of counterexamples for your claim} {\color{black}
Specifically, if there are only finitely many critical points ($N$), then the total area of $E$ cannot exceed $C\frac{N^2\wh\nu^{1/3}|k|^{-1/3}}{\min\{|u'(y)||y\in \Torus,\,u'(y) \text{ exists. }\}}$.
\textcolor{black}{
The proof of the above claim is as follows. There are three possible scenarios: a) $\text{dist}(\lambda, \text{Range}(u))>\widehat{\nu}^{1/3} |k|^{-1/3}$; b) $\text{dist}(\lambda, \text{Range}(u))\in(0,\widehat{\nu}^{1/3}|k|^{-1/3}]$; c) $\text{dist}(\lambda, \text{Range}(u))=0$. In scenario a), the set $E$ is empty, so the bound holds trivially. In scenario b), by definition of $N$, there can be at most $N$ critical points $y_i$ in the set $\{y||u(y)-\lambda|\leq \widehat{\nu}^{1/3}|k|^{-1/3}\}$. Around each such critical point $y_i$, there is a connected component $F(y_i)$ of the set $\{y||u(y)-\lambda|\leq \widehat{\nu}^{1/3}|k|^{-1/3}\}$ enclosing $y_i$. The total number of the connected components $F(y_i)$ is bounded by $N$. Note that the connected component $F(y_i)$ can contain other critical points, but there can be at most $N$ of them. Further recall that if the derivative of $u$ exists, $|u'|\geq \frac{1}{C}$. As a result, the size of each connected component $F(y_i)$ is at most $N C\widehat{\nu}^{1/3}|k|^{-1/3}$. Thus, summing up the lengths of all connected components, we obtained the bound $|E|\leq C\frac{N^2\widehat\nu^{1/3}|k|^{-1/3}}{\min\{|u'(y)||y\in \mathbb{T} ,\,u'(y) \text{ exists.}\}} $. We note that a more careful accounting would reduce $N^2$ in the bound to $N,$ but we do not pursue it for simplicity.
In the last scenario, we can consider the intersection points $z_i$ such that $u(z_i)=\lambda$. There can be at most $N$ of these points, since the number of times a profile $u$ can cross a given value is bounded
by the number of critical points. Around each intersection point $z_i$, we can consider the connected component $G(z_i)$ of the set $\{y||u(y)-\lambda|\leq \widehat{\nu}^{1/3}|k|^{-1/3}\}$. The lengths of the components $G(z_i)$ are then estimated similarly to $F(y_i)$ in scenario b), arriving  at the same bound $|E|\leq C\frac{N^2\widehat\nu^{1/3}|k|^{-1/3}}{\min\{|u'(y)||y\in \mathbb{T} ,\,u'(y) \text{ exists.}\}}$.
}

To estimate $||w_k||_{L^2(E)}^2$, we use the Gagliardo-Nirenberg inequality, and then the estimate \eqref{Real} to get that
%\textcolor{red}{Rmk: And the constant $C$ is universal constant. Drop $GN$. and $C$ changes from line to line. and $C_{\infty}, C_{H^1}$ should be dropped. }\textcolor{black}{I have added the notation explanation in section 1.2. And the $C_{...}$ are simplified. }
\begin{align}
||w_k||_{L^2(E)}^2\leq &C\wh\nu^{1/3}|k|^{-1/3}||w_k||_{L^\infty(\Torus)}^2\\
\leq &C \wh\nu^{1/3}|k|^{-1/3}||w_k||_{L^2(\Torus)}||\pa_Y w_k||_{L^2(\Torus)}+C\wh\nu^{1/3}|k|^{-1/3}||w_k||_{L^2(\Torus)}^2\\
\leq& CB \wh\nu^{2/3}|k|^{-2/3}||\pa_Y w_k||_{L^2(\Torus)}^2+\left(\frac{1}{B}+C\wh\nu^{1/3}|k|^{-1/3}\right)||w_k||_{L^2(\Torus)}^2\\
\leq& CB \wh\nu^{-1/3}|k|^{-2/3}||F||_{L^2(\Torus)}||w_k||_{L^2(\Torus)}+\left(\frac{1}{B}+C\wh\nu^{1/3}\right)||w_k||_{L^2(\Torus)}^2\\
\leq&{C(B) }\wh\nu^{-2/3}|k|^{-4/3}||F||_{L^2(\Torus)}^2+\left(\frac{2}{B}+C\wh\nu^{1/3}\right)||w_k||_{L^2(\Torus)}^2.\label{E_est}
\end{align}%{\color{black} HS: I have replaced all the $||\cdot||_2$ by $||\cdot||_{L^2(\Torus)}$. I also do the same for the next two paragraphs.}
Next we estimate the contribution from the $E^c$ region. We apply the relations \eqref{Real},  \eqref{Imagine_test} to estimate
\begin{align}
||w_k||_{L^2(E^c)}^2\leq &C\wh\nu^{-2/3}|k|^{ 2/3}\int_{\Torus} (u-\lambda)^2 |w_k|^2dY\\
\leq&C\wh\nu^{-2/3}|k|^{-4/3}||F||_{L^2(\Torus)}^2+{C }\wh\nu^{1/3}|k|^{-1/3}||u' ||_{L^\infty(\Torus)}||\pa_Y w_k||_{L^2(\Torus)}||w_k||_{L^2(\Torus)}\\
\leq& C\wh\nu^{-2/3}|k|^{-4/3}||F||_{L^2(\Torus)}^2+{CB}\wh\nu^{2/3}|k|^{-2/3}||u'||_{L^\infty(\Torus)}^2||\pa_Y w_k||_{L^2(\Torus)}^2+\frac{1}{B}||w_k||_{L^2(\Torus)}^2\\
\leq& C\wh\nu^{-2/3}|k|^{-4/3}||F||_{L^2(\Torus)}^2+{CB }\wh\nu^{-1/3}|k|^{-2/3}||u'||_{L^\infty(\Torus)}^2||F||_{L^2(\Torus)}||w_k||_{L^2(\Torus)}+\frac{1}{B}||w_k||_{L^2(\Torus)}^2\\
\leq&{C(B)}\wh\nu^{-2/3}|k|^{-4/3}||u'||_{L^\infty(\Torus)}^{{4}}||F||_{L^2(\Torus)}^2 +\frac{2}{B}||w_k||_{L^2(\Torus)}^2.\label{E_c_est}
\end{align}
Combining \eqref{E_est} and \eqref{E_c_est}, we obtain that
\begin{align}
||w_k&||_{L^2(\Torus)}^2=||w_k||_{L^2(E)}^2+||w_k||_{L^2(E^c)}^2\nonumber\\
\leq& C(B){(1+||u'||_{L^\infty(\Torus)}^4)} \wh\nu^{-2/3}|k|^{-4/3}||F||_{L^2(\Torus)}^2+{\left(\frac{4}{B}+C\wh\nu^{1/3}\right)}||w_k||_{L^2(\Torus)}^2.
\end{align}%\frac{1}{B}
Now choosing $B\geq 8$ and $\wh\nu$ small enough  yields the estimate \eqref{Goal} and hence \eqref{ED_literature_Lip}. This concludes Step \# 2.
%Through a standard rescaling argument, one obtains the estimate \eqref{ED}.

 \noindent
 \textbf{Step \# 3: $L^\infty$-enhanced dissipation estimate. }
We derive an  $L^2$-$L^\infty$-estimate of the passive scalar semigroup $S_{s;s+\tau}$, which represents the solution operator of the equation \eqref{PS_XY_coordinate} from time $s$ to $s+\tau$. Consider the time interval $[s, s+\kappa^{-1}d(\wh{\nu})^{-1}|\log \wh{\nu}|]$, where $\kappa\in(0,\kappa_0)$ denotes a constant \textcolor{black}{($\kappa_0$ is defined in \eqref{ED_literature_Lip}, \eqref{ED_literature_nondegenerate})} and $d(\wh\nu)=\wh\nu^{1/3}$ in case a) and {\color{black} $d(\wh \nu)=\wh \nu^{1/2}$ in case b)}. First we  prove the following estimate for passive scalar equation% should be $d(\wh\nu)=\wh\nu^{1/2}|\log \wh\nu|^{-2}$
\begin{align}\label{Nash}
||S_{s,s+\tau} \eta_{\sim}(s)||_\infty\leq \frac{C }{(\wh{\nu} \tau)^{1/2}}||\eta_{\sim}(s)||_2.
\end{align}
The proof of this estimate \eqref{Nash} is a combination of Nash inequality and a duality argument. \textcolor{black}{We refer the interested readers to the proof of Lemma 3.1 and 3.3 in \cite{FannjiangKiselevRyzhik06}.} %follow the  the proof of equation (4.4) in \cite{IyerXuZlatos}, \cite{CKRZ08}, Lemma 5.4 in \cite{Zlatos2010}. or Lemma 5.4 in \cite{Zlatos10}
Now we decompose the interval $[s, s+ \kappa^{-1}d(\wh{\nu})^{-1}|\log \wh{\nu}|]$ into two equal-length sub-intervals and  apply the estimates \eqref{ED_literature_Lip} and \eqref{ED_literature_nondegenerate} to derive the following:
\begin{align}
||S&_{s,s+ \kappa^{-1}d(\wh{\nu})^{-1}|\log \wh{\nu}|}\eta  _{\sim}(s)||_\infty\\
=&||S_{s+\frac{1}{2}\kappa ^{-1}d(\wh{\nu})^{-1}|\log \wh{\nu}|,s+ \kappa ^{-1}d(\wh{\nu})^{-1}|\log \wh{\nu}|}S_{s,s+\frac{1}{2} \kappa ^{-1}d(\wh{\nu})^{-1}|\log \wh{\nu}|}\eta_{\sim}(s)||_\infty\\
\leq &\frac{C }{(\wh{\nu} \frac{1}{2}\kappa ^{-1}d(\wh{\nu})^{-1}|\log \wh{\nu}|)^{1/2}}||S_{s,s+\frac{1}{2} \kappa ^{-1}d(\wh{\nu})^{-1}|\log \wh{\nu}|}\eta_{\sim}(s)||_2\\
\leq&\frac{C}{( \kappa ^{-1}\wh{\nu} d(\wh{\nu})^{-1}|\log\wh{\nu}|)^{1/2}}||\eta_{\sim}(s)||_2 {e^{-\frac{1}{2}\kappa_0  d(\wh{\nu}) \kappa ^{-1}d(\wh{\nu})^{-1}|\log \wh{\nu}|}}\\
\leq &\frac{1}{64}||\eta_{\sim}(s)||_2{\leq \frac{1}{2}  ||\eta_{\sim}(s)||_\infty}.\label{L2_Linfty_semigroup}%\textcolor{red}3
\end{align}
In the last line, we choose $\kappa$ and then $\wh{\nu}$ small enough compared to universal constants so that the coefficient is small. We further note that the $L^\infty$-norm of $\eta_{\sim}$ is dissipative along the dynamics. To conclude, we iterate the argument on consecutive intervals to derive the estimate.
\end{proof}
To prove Theorem \ref{thm:Horizontally_homogenized}, we also need a useful formula.
\begin{lem}\label{lem:integral_formula}
a) Consider the elliptic equation on $\Torus^2$
\begin{align}\label{eq_1}
-\mathcal{L}\rho=\mathcal{S},\quad \mathcal{L}=  \de -Au(y)\pa_x - \mathbf{1},
\end{align}
with $u \in C^1(\Torus^2)$ and $\mathcal{S} \in C(\Torus^2).$
The solution $\rho$ can be  represented as follows:
\begin{align}\label{formula}
\rho=\int_0^\infty e^{t\mathcal{L}}\mathcal{S} dt.
\end{align}
Here $e^{t\mathcal{L}}$ is the semigroup generated by the operator $\mathcal{L}$.

b) Consider the solution $\rho$ to the equation on $ \Torus \times [0,\LL]$,
\begin{align}\label{eq_2}
- \cL^\dagger\rho = \mathcal{S},\,&\quad\mathcal{L}^\dagger:=\de +Au(y)\pa_x,\quad
\rho(x,y=\{0,\mathbb{L}\})=0,
\end{align}
where $u \in C^1(\Torus \times [0,\LL])$ and $\mathcal{S} \in C(\Torus \times [0,\LL]).$
%Here $\mathcal{L}^\dagger$ is the differential operator $\frac{1}{2}\nu \de+Au(y)\pa_x$ subject to Dirichlet boundary condition and
Let $e^{t\cL^\dagger}$ be the semigroup associated with $\cL^\dagger$. %If the forcing $\mathcal{S}$ is regular enough, i.e., $\mathcal{S}\in C^\infty(\Torus\times [0,\mathbb{L}])$\textcolor{black}{(??)},
Then the solution can be represented as follows:
\begin{align}
\rho=\int_0^\infty e^{t\mathcal{L	^\dagger}}\mathcal{S}dt.\label{formula_[0,L]}
\end{align}
\end{lem}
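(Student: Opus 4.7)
The plan is to view both formulas as Duhamel representations for a stationary problem: the operator $-\mathcal{L}$ (respectively $-\mathcal{L}^\dagger$) is invertible on the appropriate function space because its associated evolution semigroup decays exponentially in time, and the proposed $\rho$ is obtained by integrating the semigroup against the source. Once this integrability is established, the verification collapses to the identity
\[
-\int_0^\infty \mathcal{L}\, e^{t\mathcal{L}}\mathcal{S}\, dt \;=\; -\int_0^\infty \frac{d}{dt}\bigl(e^{t\mathcal{L}}\mathcal{S}\bigr)\, dt \;=\; \mathcal{S} - \lim_{t\to\infty} e^{t\mathcal{L}}\mathcal{S} \;=\; \mathcal{S},
\]
with the analogous computation for $\mathcal{L}^\dagger$ in part (b). The interchange of $\mathcal{L}$ with the integral is justified by closedness of $\mathcal{L}$ combined with absolute convergence.

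For part (a) the structure of $\mathcal{L}$ is particularly friendly: the zero-order sink $-\mathbf{1}$ commutes with the drift-diffusion part $\tilde{\mathcal{L}} := \frac{1}{2}\nu\Delta - Au(y)\partial_x$, so $e^{t\mathcal{L}} = e^{-t}\, e^{t\tilde{\mathcal{L}}}$. The operator $e^{t\tilde{\mathcal{L}}}$ is the transition semigroup of an advection-diffusion process on $\Torus^2$ and is therefore $L^\infty$-contractive by the maximum principle, yielding $\|e^{t\mathcal{L}}\mathcal{S}\|_\infty \leq e^{-t}\|\mathcal{S}\|_\infty$. Absolute convergence of the integral in $L^\infty$ follows, and the displayed computation proves $-\mathcal{L}\rho = \mathcal{S}$. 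Uniqueness of the solution is by the maximum principle applied to the homogeneous equation, where the $-\mathbf{1}$ term forces the only bounded solution to be zero.

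For part (b) the required exponential decay of $e^{t\mathcal{L}^\dagger}$ must instead be extracted from the Dirichlet boundary condition. The transport term $Au(y)\partial_x$ is skew-symmetric in $L^2(\Torus \times [0,\LL])$ because $\int u(y)\, w\, \partial_x w\, dx\, dy = \frac{1}{2}\int u(y)\, \partial_x(w^2)\, dx\, dy = 0$ by $x$-periodicity, so testing $\partial_t w = \mathcal{L}^\dagger w$ against $w$ gives
\[
\frac{1}{2}\frac{d}{dt}\|w\|_{L^2}^2 = -\frac{\nu}{2}\|\nabla w\|_{L^2}^2 \leq -\frac{\nu\pi^2}{2\LL^2}\|w\|_{L^2}^2,
\]
by the Poincaré inequality in $y$ with first Dirichlet eigenvalue $\pi^2/\LL^2$. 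Upgrading to $L^\infty$ via a Nash-type $L^2$-to-$L^\infty$ smoothing estimate (in the same spirit as \eqref{Nash}) makes the integral in \eqref{formula_[0,L]} converge in $L^\infty$, and the same Duhamel computation yields $-\mathcal{L}^\dagger \rho = \mathcal{S}$. Uniqueness is again by the maximum principle for the Dirichlet problem.

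The main technical point that requires attention is the $L^\infty$ decay of the semigroup in part (b): part (a) is essentially algebraic once the sink term is factored out, but part (b) needs the spectral gap from the Dirichlet boundary combined with parabolic smoothing in order to pass from $L^2$ to $L^\infty$. This is the step most at risk of a delicate verification, though the tools are standard and already parallel those used in the enhanced dissipation theorem preceding the lemma.
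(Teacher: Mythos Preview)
Your approach is the same as the paper's: both identify $\rho = \int_0^\infty e^{t\mathcal L}\mathcal S\,dt$ as a Duhamel/resolvent formula, justify convergence of the integral by exponential decay of the semigroup (the $-\mathbf 1$ sink for (a), the Dirichlet spectral gap for (b)), and then verify the equation by the fundamental theorem of calculus in $t$.

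There is one point in part (b) that you gloss over and the paper singles out explicitly. Your computation
\[
-\int_0^\infty \frac{d}{dt}\bigl(e^{t\mathcal L^\dagger}\mathcal S\bigr)\,dt \;=\; \mathcal S - \lim_{t\to\infty} e^{t\mathcal L^\dagger}\mathcal S
\]
implicitly uses $\lim_{t\to 0^+} e^{t\mathcal L^\dagger}\mathcal S = \mathcal S$. But $e^{t\mathcal L^\dagger}$ is a Dirichlet semigroup, and $\mathcal S\in C(\Torus\times[0,\LL])$ need not vanish at $y=0,\LL$; so this limit cannot hold uniformly up to the boundary. The paper flags this and establishes the limit \emph{pointwise in the interior} via the Feynman--Kac representation
\[
(e^{t\mathcal L^\dagger}\mathcal S)(x,y) \;=\; \mathbb E^{(x,y)}\!\left[\mathcal S(X_t,Y_t)\,\mathbf 1_{\{t<\tau^{(x,y)}\}}\right],
\]
where $\tau^{(x,y)}$ is the hitting time of the Dirichlet boundary. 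Your $L^2$/Poincar\'e route handles the $t\to\infty$ tail cleanly, but you should either (i) note that strong continuity of the Dirichlet semigroup on $L^2$ gives $e^{t\mathcal L^\dagger}\mathcal S\to\mathcal S$ in $L^2$, so the identity $-\mathcal L^\dagger\rho=\mathcal S$ holds first in the weak/$L^2$ sense and then classically in the interior by parabolic regularity, or (ii) invoke the probabilistic representation as the paper does. Either fix closes the gap; the paper chose the latter.
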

\begin{proof} Let us first prove \eqref{formula}. Note that $e^{t\mathcal{L}} \mathcal{S}$ is the solution to the passive scalar equation
\begin{align}\label{pseq}
\pa_t (e^{t\mathcal{L}}\mathcal{S})=\cL (e^{t\mathcal{L}}\mathcal{S}), \quad e^{0\mathcal{L}}\mathcal{S}=\mathcal{S}.%n_\sim n_\sim n_\sim n_\sim  n_\sim
\end{align}
Note that $e^{t\cL}$ is $C^2(\Torus^2)$ for every $t>0$ by parabolic regularity.
By considering the time evolution of the maximum value, we observe that the solutions to the passive scalar equation decay to zero exponentially in time for all $(x,y)\in\Torus^2$.
%i.e., $e^{\infty\mathcal{L}}\mathcal{S}=0$.
By integrating the above equation in time on both sides, we obtain that %and use the fact that ,
\begin{align*}
0-\mathcal{S}=&\int_0^\infty \cL(e^{t\cL}\mathcal{S}) dt=\cL\left(\int_0^\infty e^{t\cL}\mathcal{S} dt\right)=\cL \rho.
\end{align*}%n_\simn_\simn_\simc_\simc_\simc_\simn_\sim
Hence $\rho$ solves $-\cL \rho =\mathcal{S}$.

%\textcolor{black}{E. B. (Brian) Davis.  Heat semigroup. }
Next we consider case b). Since  the first eigenvalue of the differential operator $-\mathcal{L}^\dagger$ defined on the domain $\Torus\times [0,\mathbb{L}]$
with Dirichlet boundary conditions at $y=0,\LL$ is strictly positive, % (\textcolor{red}{No gap here, say the first eigenvalue is strictly positive.})
we have that the solutions to the passive scalar equation associated with $\mathcal{L}^\dagger$ decay to zero exponentially in time.
The convergence to the initial data as $t\rightarrow 0^+$ is trickier as $\mathcal{S}$ does not have to satisfy the Dirichlet boundary condition.
Nevertheless, it is true that
\begin{equation}\label{idconv}
e^{t \cL^\dagger} \stackrel{t \rightarrow 0^+}{\longrightarrow} \mathcal{S}
\end{equation}
% as $t\rightarrow 0^+$
 for every $(x,y) \in \Torus \times (0,\LL).$
One way to prove this is to use Feynman-Kac formula. Let, as before, $(X_t,Y_t)$ be the diffusion process
$dX_t = Au(Y_t)dt+ \sqrt{\nu}dB^{(1)}_t,$ $dY_t=  \sqrt{\nu}dB^{(2)}_t.$ Then the solution of the passive scalar equation \eqref{pseq}
satisfies
\begin{equation}\label{FK}
(e^{t\mathcal{L}}\mathcal{S})(x,y,t) = \mathbb{E}^{(x,y)}
\left[ \mathcal{S}(X_t, Y_t) 1_{t<\tau^{(x,y)}} \right],
\end{equation}
where $\tau^{(x,y)}$ is the hitting time of the Dirichlet boundary $y = 0, \LL$ for a trajectory $(X_t, Y_t)$ starting at $(x,y).$ The formula \eqref{FK} is certainly well-known, though we could not find a convenient direct reference for it. It is not difficult to derive from its variant involving a potential rather than Dirichlet boundary condition \cite{Oksendal}
by taking the potential to be constant outside our domain and taking this constant to infinity.
On the other hand, the formula \eqref{FK} implies \eqref{idconv} via elementary estimates provided that $\mathcal{S}$ is continuous.
%Since the initial data $\mathcal{S}$ is regular, the %convergence of the semigroup to the initial data as $t\rightarrow 0^+$ is justified.
Hence we apply the same argument as in the proof of integration formula \eqref{formula} to derive \eqref{formula_[0,L]}.

% \textcolor{black}{
% \begin{remark} In case b), if the $\mathcal{S}$ is not zero at the two boundaries $y=0,\mathbb{L}$, we can approximate $\mathcal{S}$ by $\mathcal{S}_{in}\in C^\infty (\Torus\times [0,\mathbb{L}])$, such that $\mathcal{S}_{in}$ vanishes on the boundary, and $||\mathcal{S}_{in}-\mathcal{S}||_{L^2}\leq %\frac{1}{A}$. By the linearity of the equation, we have that $\rho=\rho_1+\rho_2$ such that
% \begin{align}
% \mathcal{L}^\dagger \rho_1=\mathcal{S}_{in},  \mathcal{L}^\dagger \rho_2=(\mathcal{S}-\mathcal{S}_{in}).
% \end{align}
% By energy estimate, we have $||\rho_2||_2^2\leq \frac{C}{A}$. And the $\rho_1$ part can be estimated through the estimate \eqref{ED_channel} as in \eqref{T_sim_2}.
%\end{remark}
%}
 \ifx \textcolor{black}{Check: I am not sure whether we need to worry about the convergence of the semigroup $e^{t\mathcal{L}^\dagger}$ as time approaches zero? Probably it will converge to the initial data in the $L^2$-sense so that the integral formula holds in terms of $L^2$? Note that the right hand side of \eqref{TT_sim_eq} might not vanish at the boundary so the pointwise convergence might be wrong. One idea ($in$ = interior, $b$=boundary)
\begin{align}
\frac{1}{2}\nu \de \TT_\sim+Au(y)\pa_x \TT_\sim= RHS=F\eta_{in;F}+F\eta_{b;F},\quad \eta_{in;F}+\eta_{b;F}=1, \\
\eta_{in;F},\eta_{b;F}\in C^\infty,\,||F\eta_{b;F}||_{L^2}\leq A^{-100}.
\end{align}
Now $\TT_\sim=\TT_{\sim;in}+\TT_{\sim;b}$, and \begin{align}
\frac{1}{2}\nu \de \TT_{\sim;in}+Au(y)\pa_x \TT_{\sim;in}=F\eta_{in};\\
\frac{1}{2}\nu \de \TT_{\sim;b}+Au(y)\pa_x T_{\sim;b}=F\eta_{b;F}.
\end{align}
Now the $\TT_{\sim;in}$ can be estimated through the original argument. And standard energy estimate yields that $||\TT_{\sim;b}||_2\leq A^{-100}$.}\fi
\end{proof}

\begin{proof}[Proof of Theorem \ref{thm:Horizontally_homogenized}]
\ifx%%%%%%% New sketch:
To apply the aforementioned theorem to the finite channel, we apply the even reflection of the data $n,c$ and the shear profile with respect to the upper boundary and extend it to the torus. As a result, the strictly monotone shear profile becomes Lipschitz. Next we estimate the solution to the elliptic problem
\begin{align*}
-\de c_\sim + Au(y)\pa_x c_\sim=n_\sim-c_\sim.
\end{align*}
Define the operator $L$ to be
\begin{align*}
L:=\de -Au(y)\pa_x -\mathbf{1}_d.
\end{align*}
We rewrite the solution $c_{\sim}$ as follows
\begin{align*}
c_{\sim}=\int_0^\infty e^{tL} n_{\sim}dt.
\end{align*}
By rescaling in time, we observe that the enhanced dissipation  estimate \eqref{ED} also holds for the system $\pa_t \eta_\sim=L\eta_\sim$.  Hence we obtain that
\begin{align*}
||c_{\sim}||_2\leq \int_0^\infty|| e^{t L}||_{L^2\rightarrow L^2}||n_{\sim}||_2 dt\leq\frac{CL^{4/3}}{\nu^{1/3}A^{2/3}}||n_{\sim}||_2.% /\infty\infty\infty\infty \infty
\end{align*}
Similarly, the $\pa_x c_\sim$ has the same bound. Now we estimate the $\pa_y c_\sim$. Note that it solves the equation
\begin{align*}
\de \pa_y c_\sim+Au(y)\pa_x\pa_y c_\sim+A u'(y)\pa_x c_\sim =\pa_y n_\sim-\pa_y c_\sim.
\end{align*} Applying the same arguments as above yields that
\begin{align*}
||\na c_{\sim}||_2=&\norm{\int_0^\infty e^{tL}\left(-Au'(y)\pa_x c_\sim+\pa_y n_\sim\right)dt }_2\\
\leq &\int_0^\infty||e^{tL}||_{L^2\rightarrow L^2}\left(A||u'||_\infty\frac{L^{4/3}||\pa_x n_\sim||_2}{\nu^{1/3}A^{2/3}}+ ||\pa_y n_\sim||_2 \right) dt \\
\leq&\frac{C(\nu,L,||u'||_\infty)}{A^{1/3}}||\na n_{\sim}||_2. %\infty|\log A|^{2}\infty
\end{align*}
Hence the heterogeneous part of the $\na c$ is small. This concludes the proof.
\fi%%%%%%
We divide the proof into two steps.

\noindent
\textbf{Step \# 1: Estimate of the solution $h_\sim:=e^{t\mathcal{L}}n_\sim$ of the evolution equation. }
%Here we use the separation of time scale to derive the $A^{2/3}$ suppression with the non-degenerate shear flow.
First of all, we consider the equation
\begin{align}\label{PS_sim}
\pa_t h_{\sim}+Au(y)\pa_x h_\sim= \de h_\sim-h_\sim,\quad h_\sim(t=0,\cdot)=n_\sim(\cdot),
\end{align}
and an approximate system
\begin{align*}
\pa_t h_1+Au_1(y)\pa_x h_1= \de h_1-h_1, \quad h_1(t=0, \cdot)=n_\sim(\cdot).
\end{align*}
Here the Lipschitz shear profile $u_1(y)$ is identical to $u(y)$ near the egg zone, i.e., $y\in[\frac{L}{2}-2\delta,\frac{L}{2}+2\delta]$. The profile $u_1$ only differs from the original shear profile $u$ near the critical points of $u:$
we replace every critical point with a piecewise linear profile. In particular, we choose $u_1(y)$ such that $|u'_1(y)|\geq \frac{1}{C}>0$ for every $y$ where the derivative exists (and it may fail to exist only in a finite
number of points coinciding with the critical points of $u$). Now we compare the two solutions $h_\sim$ and $h_1$,
\begin{align}
\pa_t (h_\sim-h_1)+A(u(y)-u_1(y))\pa_x h_\sim+&Au_1(y)\pa_x(h_\sim-h_1)=\de (h_\sim-h_1)-(h_\sim-h_1), \nb\\
\label{h_sim-h_1_eq}\quad &h_\sim(0,\cdot)-h_1(0,\cdot)=0.
\end{align}
Here we show that for $t\leq A^{-1/4}$, the difference is small. First, let us establish
 that the time integration of the contribution $(u-u_1)\pa_x h_\sim$ is small on this period.
 Note that the difference $u(y)-u_1(y)$ is supported away from the initial data and the diffusion is limited by smallness of the time interval $[0, A^{-1/4}]$; hence one expects that this term is small.

\ifx
\textcolor{black}{To rigorously derive the decay, we consider the quantity $Q:=\lan (\pa_xh_\sim)^2\ran$. Direct calculation yields that the quantity $Q$ solves the following equation:
\begin{align*}
\pa_t Q=\nu\de_y Q-2\nu\lan (\pa_y Q)^2\ran.
\end{align*}\
By comparison principle, $Q$ is dominated by the solution to the heat equation $\pa_t H=\nu\de_y H$ subject to initial data $H(0,\cdot)=\lan (\pa_x n_\sim)^2\ran$.  }
\fi

To rigorously derive the decay, we consider the equation \eqref{PS_sim} on the universal cover $L\Torus\times\rr$ and use $h_\sim^c$ and $n_\sim^c$ to denote the solution and the initial data.
Taking the horizontal Fourier transform of the equation leads to %\textcolor{black}{(Check the Fourier transform constants!)} \eqref{PS_sim} and end up with
\begin{align*}
\pa_t {  k} \wh h_{k}^c+Au(y) \frac{2\pi i  k}{L}k \wh h_{k} ^c=-\frac{4\pi^2k^2}{L^2} {  k} \wh h_{k}^c+ {   k} \pa_{yy}\wh h_{k}^c- k \wh h_k^c. %\frac{i2\pi k}{L}
\end{align*}
%\textcolor{red}{Why do we need this faster decay. (A. 62) is the key.  Explain! I added a remark after Theorem 4.}
If we calculate the time evolution of $|k\wh h_{k}^c|^2$, we obtain that
\begin{align*}
\pa_t |k\wh h_{k}^c|^2=&-\frac{8\pi^2k^2}{L^2}|k\wh h_{k}^c|^2+\pa_{yy}|k\wh h_{k}^c|^2-2|k\pa_y \wh h_{k}^c|^2-2|k\wh h_{k}^c|^2\\
\leq& -\frac{8\pi^2 k^2}{L^2}|k\wh h_{k}^c|^2+\pa_{yy}|k\wh h_{k}^c|^2-2|k\wh h_{k}^c|^2.
\end{align*}
Since the solution is positive, by comparison principle, we have that
\begin{align}\label{point_wise_bound}
|k \wh h_{k}^c|^2(t,y )\leq e^{-(\frac{8\pi^2 |k|^2}{L^2}+2)t}\frac{1}{\sqrt{4\pi t}}\int_\Real e^{-\frac{|z-y|^2}{4 t}}|k\wh n_{k}^c|^2(z)dz.%2 1 |k|^2
\end{align}
For all $A \geq 1,$ $\textcolor{black}{0<}t\leq A^{-1/4}$ and all $y$ such that $\mathrm{dist}_\rr(y,\mathrm{support}|\wh n_k^c|)\geq \delta$, we \textcolor{black}{apply the monotone convergence theorem, the fact that the size of the target is small $\delta\leq L/8$, and the periodicity of $\wh n_k^c$ to obtain} the following bound: %Lebesgue dominated   for any $A>1$:%L/C_0  L^2  L^{2 }  L L
{\color{black}\begin{align*}
|k \wh h_{k}^c|& (t,y )\leq\sqrt{e^{-(\frac{8\pi^2|k|^2}{L^2}+2)t}\frac{1}{\sqrt{4\pi t}}\sum_{\ell=-\infty}^\infty\int_{0}^L e^{-\frac{|z+\ell L-y|^2}{4t}}|k \wh n_k|^2(z)dz}\\
 \leq& C(\delta, L)e^{-\frac{1}{2}(\frac{8\pi ^2 |k|^2}{L^2}+2)t}\frac{1}{\sqrt[4]{4\pi t}}e^{-\frac{\delta^2}{8t}}||k \wh n_k||_{L^2_y(\Torus)}\leq C(\delta,L)e^{- \frac{ \pi |k| \delta}{L}}\frac{A^{1/16}}{\sqrt[4]{4\pi}}e^{-\frac{\delta^2}{16}A^{1/4}}||k\wh n_k||_{L_y^2(\Torus)} .
\end{align*}}\textcolor{black}%{\textbf{(Siming: There are some problems with the constant in front of $\delta$. I change all the numbers here.)}}
{\color{black} Here in the last step, we apply the relation $\frac{4\pi^2|k|^2}{L^2}t+\frac{\delta^2}{16 t}\geq  \frac{2\sqrt{4}\pi |k|\sqrt{t}}{L}\frac{\delta}{\sqrt{16}\sqrt{t}} \geq  \frac{\pi|k|\delta}{L}$. }
The above estimate holds, in particular, for all $y$ such that $\mathrm{dist}_\Torus(y,\{z|(x,z)\in\mathrm{supp}(n_\sim)\}) \geq \delta >0$.
Now the $L^\infty_x$-norm of $\pa_x h_\sim^c$ can be estimated as follows:
\begin{align}\label{pa_x_h_f_infty_est_away_from_support}
||\pa_x h_\sim^c(\cdot ,y)||_{L_x^\infty}\leq &C\sum_{k\neq 0}|k\wh h_{k}^c(y)|\leq \frac{CA^{1/16}}{e^{C^{-1}\delta^2 A^{1/4}}}\sum_k {e^{- \frac{ \pi |k|\delta}{L}}}
%e^{- \frac{ \pi |k| }{4C_0^2}}
||k \wh n_k||_{L^2_y}\leq \frac{C A^{1/16}L^2}{\delta^2} e^{-C^{-1}\delta^2 A^{1/4}}|| n_\sim||_{L^2(\Torus^2)} ,\\
\quad& \mathrm{dist}_\Torus(y,\{z|(x,z)\in\mathrm{supp}(n_\sim)\}) \geq \delta >0, \quad \forall t\leq A^{-1/4}.
\end{align}
Now combining \eqref{pa_x_h_f_infty_est_away_from_support}, and the fact that $|u(y)-u_1(y)|$ is non-zero only for $\mathrm{dist}_\Torus(y,\{z|(x,z)\in \mathrm{supp}(n_\sim)\})\geq \delta>0$,  we have that
\begin{align*}
|A (u(y)-u_1(y))\pa_x h_\sim(y) |\leq \frac{C A^{17/16}L^2}{\delta^2} e^{-C^{-1}\delta^2 A^{1/4}}||n_\sim||_{L^2(\Torus^2)},\quad \forall y\in \Torus.
\end{align*}
From the equation \eqref{h_sim-h_1_eq} and direct application of comparison principle, we have that for $t\leq A^{-1/4}$, the difference $h_\sim -h_1$ is small:
\begin{align*}
||h_\sim -h_1||_{L_{x,y}^\infty}(t)\leq& \textcolor{black}{\int_0^t\|A (u(\cdot)-u_1(\cdot))\pa_x h_\sim(s,\cdot) \|_{L_y^\infty(\Torus)} ds}\\
\leq& A^{-1/4}  C L^2 \delta^{-4} A^{17/16}e^{-C^{-1}\delta^2 A^{1/4}}||n_\sim||_{L^2(\Torus^2)}\\
\leq &C L^2 \delta^{-4} e^{-C^{-1}\delta^2 A^{1/4}}||n_\sim||_{L^2(\Torus^2)},\quad \forall t\in[0, A^{-1/4}].
\end{align*}
Recalling that the $||h_1 ||_{L_{x,y}^\infty}$ undergoes enhanced dissipation with rate $|\log A|^{-1}A^{2/3}$ \eqref{ED},  we have obtained the following estimate
\begin{align}\label{h_sim_t_leq_A1/4}
||h_{\sim}||_\infty(t) \leq &||h_1||_{L_{x,y}^\infty}(t)+||h_\sim - h_1||_{L_{x,y}^\infty}(t)\\
\leq &C||n_\sim||_{\infty} e^{-\kappa \frac{\nu^{1/3}A^{2/3}}{L^{4/3}}|\log \frac{\nu}{AL}|^{-1}t}+C(L,\delta) e^{-C^{-1}\delta^2 A^{1/4}}||n_\sim||_{L^2(\Torus^2)},\quad \forall t\in[0,A^{-1/4}].
\end{align}
For $t\geq A^{-1/4}$, we apply the enhanced dissipation estimate \eqref{ED_case_b} for non-degenerate shear flow
\begin{align}\label{h_sim_t_geq_A1/4}
||h_{\sim}(t)||_\infty \leq C ||n_\sim||_\infty e^{-\kappa |\log\frac{\nu}{ AL}|^{-1}\frac{\nu^{1/2}A^{1/2}}{L^{3/2}}t},\quad \forall t\in[A^{-1/4},\infty).
\end{align}

\noindent
\textbf{Step \# 2: Estimate of the chemical $c_\sim$.}
Now we apply the formula $c_\sim =\int_0^\infty e^{t\cL} n_\sim dt$, $\cL=\frac{1}{2}\nu\de-Au(y)\pa_x-1$, and the estimates \eqref{h_sim_t_leq_A1/4}, \eqref{h_sim_t_geq_A1/4} to derive the following
\begin{align*}
||c_\sim||_\infty\leq& \left(\int_0^{A^{-1/4}}+\int_{A^{-1/4}}^\infty \right) ||e^{t\cL}n_\sim||_\infty dt\\
\leq& C\int_0^{A^{-1/4}}\left( ||n_\sim||_\infty e^{-\kappa \frac{\nu^{1/3}A^{2/3}}{L^{4/3}}|\log \frac{\nu}{AL}|^{-1} t}+ L^2 \delta^{-2}e^{-C^{-1}\delta^2 A^{1/4}}|| n_\sim||_2 \right) dt\\
&+C\int_{A^{-1/4}}^\infty ||n_\sim||_\infty e^{-\kappa \frac{\nu^{1/2}A^{1/2}}{L^{3/2}}|\log\frac{\nu}{AL}|^{-1}t}dt \\
\leq & \frac{C(L,\delta,\nu)|\log A|}{A^{2/3}} ||n_\sim||_\infty.
\end{align*}
%In the last line, we pick $A$ large enough compared to $\nu,L$.
Similar argument yields that
\begin{align*}
||\pa_x ^{i+1}\ c_\sim||_\infty \leq \frac{C(\nu,\delta,L)|\log A|}{A^{2/3}} ||\pa_x^{i+1} n_\sim||_\infty,\quad i\in\{0,1\}.
\end{align*}
The norm $||\pa_y c_\sim||_\infty$ can be estimated similarly. Applying the enhanced dissipation estimate of the non-degenerate shear flow \eqref{ED_non_degenerate}, we have that for $A$ large enough,
\begin{align*}
||\pa_y c_\sim||_\infty \leq \int_0^\infty ||e^{t\cL}(-Au'(y)\pa_x c_\sim+\pa_y n_\sim)||_\infty dt\leq\frac{C|\log A|^2}{A^{1/6}} ||\na n_\sim||_\infty.
\end{align*}
Similar arguments yields the estimate
\begin{align*}
||\pa_x\pa_yc_\sim||_\infty\leq\frac{C|\log A|^2}{A^{1/6}} ||\pa_x\na n_\sim||_\infty.
\end{align*}
This concludes the proof of \eqref{Quantitative_Horizontally_homogenization}.
%R, and we consider the simplified SDE The simplified SDE can be estimated with the argument below.
\end{proof}

\section{Proof of Theorem \ref{theorem_first_hitting_time_full_system}}\label{Sec:Proof_thm_3}

Before proving Theorem \ref{theorem_first_hitting_time_full_system}, we establish an auxiliary convergence result concerning solutions of partial differential equations
with Dirichlet boundary conditions. Consider the solutions of the 2D elliptic system
\begin{align}\label{T_eq}
\nu\de \TT+V\cdot \na \TT+Au(y)\pa_x \TT=-1,\\
\TT(x,y=0)=\TT(x,y=\LL)=0.
\end{align}
%\textcolor{black}{There might be sign problem in front of $V$ and $A$. But I skip it for the moment. (It seems to be fine right now?)}
Here $V$ is defined as in \eqref{SDE_full}, and the vertical size of the domain satisfies $\LL\in [L/2,2L]$.
The horizontal size remains $L$ and the boundary conditions in $x$ are periodic.
The equation \eqref{T_eq} naturally arises when we consider the average first exit time from the domain $\{(x,y)|y\in[0,\mathbb{L}]\}$. The solution to the equation \eqref{T_eq} can be decomposed into the $x$-average $\lan \TT\ran$ and the remainder $\TT_\sim$.  We further consider the 1D  system
\begin{align}\label{T_1D}
\nu\pa_{yy}\TT_{1D}+&V_{\mathrm{eff}}^{(2)}\pa_y \TT_{1D}=-1,\quad\TT_{1D}(y=0)=\TT_{1D}(y=\LL)=0,\\
V_{\mathrm{eff}}=& (0,V_{\mathrm{eff}}^{(2)}),\quad\,V_{\mathrm{eff}}^{(2)}(y)=\varphi(\chi|\pa_y \lan  c\ran(y)|)\frac{\pa_y \lan c \ran(y)}{|\pa_y \lan c\ran(y)|}.\label{T_1D_2}%,\, \lan V\ran(y)
\end{align} %\textcolor{red}{$V_{\mathrm{eff};2}$ or $V_{\mathrm{eff}}^{\lan 2\ran}$}
Here $\varphi\in C^2(\rr_+)$ is defined in \eqref{SDE_full}, and satisfies the constraint $ \varphi(0)=0$. Since the $x$-average $\lan c\ran$ solves the equation
\begin{align*}
(-\pa_{yy}+1)\lan c\ran=\lan n\ran,
\end{align*}
direct estimate yields that the $V_{\mathrm{eff}}$ satisfies the following bound
\begin{align}\label{V_eff_est}
||V_{\mathrm{eff}}||_\infty+||\pa_y V_{\mathrm{eff}}||_\infty\leq C(||\lan n\ran||_{W^{1,\infty}},\chi,||\varphi||_{C^2}).
\end{align}
Now we prove the convergence proposition.
%\textcolor{red}{Proposition 1. Comment on the shear (shear is non-degenerate. )}
\begin{pro}\label{pro:elliptic_eq}
Consider the solutions to \eqref{T_eq} and \eqref{T_1D}. Assume that the size of the domain $|\mathbb{L}|$ is bounded from below, i.e., $|\mathbb{L}|\in[\frac{1}{2}L,L]$ and the shear profile $u \in C^2$ is non-degenerate in the sense that
there are finitely many critical points and if $u'(y_0)=0$, then $u''(y_0)\neq 0$. Further assume that the chemical density $c_\sim$ satisfies estimates  \eqref{Quantitative_Horizontally_homogenization}.
If the shear strength $A$ is large enough, i.e., $A\geq A_0(\nu, L,\chi, ||\varphi||_{C^2}, ||n_\sim||_{W^{2,\infty}})$, then the following estimate holds:
\begin{align}
%||\TT_\sim||_\infty\leq &\frac{C|\log A|^{2 }}{A^{1/12{}}};\label{remainder_est}\\
||  \TT-\TT_{1D}||_\infty\leq& \frac{C|\log A|^2}{A^{1/6}}.\label{TT-TT1D_est}
\end{align}
Here the constant $C$ may depend on $\nu, \, L,\,\chi,\,||\varphi||_{C^2},\,||u'||_\infty$, and $\,||n_\sim||_{W^{2,\infty}(\Torus^2)}$.
%34 \textcolor{black}{(might not be a complete list yet)}
\end{pro}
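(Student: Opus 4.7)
The plan is to recast the proposition as an elliptic estimate for the difference $W := \TT - \TT_{1D}$, where $\TT_{1D}$ is extended to $\Torus \times [0,\LL]$ as an $x$-independent function. Both $\TT$ and $\TT_{1D}$ vanish on $\{y=0\}\cup\{y=\LL\}$, so $W$ inherits zero Dirichlet data. Applying the 2D operator $\mathcal{O} := \tfrac{1}{2}\nu\Delta + V\cdot\nabla + Au(y)\partial_x$ to $\TT_{1D}$, using $\partial_x\TT_{1D}=0$ and $\partial_y\TT_{1D}=\TT_{1D}'$, and invoking \eqref{T_1D}--\eqref{T_1D_2} gives
\begin{equation*}
\mathcal{O}\TT_{1D} = \tfrac{1}{2}\nu\TT_{1D}'' + V^{(2)}\TT_{1D}' = -1 + \bigl(V^{(2)}-V_{\mathrm{eff}}^{(2)}\bigr)\TT_{1D}'.
\end{equation*}
Subtracting this from $\mathcal{O}\TT = -1$ produces the key equation
\begin{equation*}
\tfrac{1}{2}\nu\Delta W + V\cdot\nabla W + Au(y)\partial_x W = -\bigl(V^{(2)}-V_{\mathrm{eff}}^{(2)}\bigr)\TT_{1D}', \qquad W\big|_{y=0,\LL}=0.
\end{equation*}

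The next step is a Feynman--Kac/maximum-principle bound on $\|W\|_\infty$. Let $(X_t,Y_t)$ be the diffusion with drift $V + (Au(Y_t),0)$ and additive noise $\sqrt{\nu}\,\mathbf{B}_t$, and let $\tau$ be the first hitting time of $\{y=0\}\cup\{y=\LL\}$. Then
\begin{equation*}
W(x,y) = \E^{(x,y)}\!\left[\int_0^\tau \bigl(V^{(2)}-V_{\mathrm{eff}}^{(2)}\bigr)(X_s,Y_s)\,\TT_{1D}'(Y_s)\,ds\right].
\end{equation*}
Crucially, $\tau$ is determined only by the $Y$-marginal $dY_t = V^{(2)}dt+\sqrt{\nu}dB^{(2)}_t$, whose drift is uniformly bounded by $\|\varphi\|_\infty$; hence $\sup_{(x,y)}\E^{(x,y)}[\tau] \leq C(\nu,\LL,\|\varphi\|_\infty)$, \emph{independent of $A$}. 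Combined with $\|\TT_{1D}'\|_\infty \leq C(\nu,L,\chi,\|\varphi\|_{C^1})$ (from the integrating-factor solution of \eqref{T_1D} and \eqref{V_eff_est}), this yields $\|W\|_\infty \leq C\,\|V^{(2)}-V_{\mathrm{eff}}^{(2)}\|_\infty$.

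The final step is to control $\|V^{(2)}-V_{\mathrm{eff}}^{(2)}\|_\infty$ via Theorem \ref{thm:Horizontally_homogenized}. The vector field $F(p) := \varphi(\chi|p|)\,p/|p|$ (with $F(0)=0$) is well-defined and globally Lipschitz thanks to $\varphi(0)=0$ and $\varphi\in C^2$: the Taylor expansion $\varphi(\chi|p|)=\chi\varphi'(0)|p|+O(|p|^2)$ makes $F$ smooth through the origin, and direct differentiation gives $|DF(p)|\leq C\chi\|\varphi\|_{C^1}$ uniformly. Since $V^{(2)}=F(\nabla c)^{(2)}$, $V_{\mathrm{eff}}^{(2)}=F\bigl((0,\partial_y\langle c\rangle)\bigr)^{(2)}$, and $\nabla c - (0,\partial_y\langle c\rangle) = (\partial_x c_\sim,\partial_y c_\sim)$,
\begin{equation*}
\|V^{(2)} - V_{\mathrm{eff}}^{(2)}\|_\infty \leq C\chi\|\varphi\|_{C^1}\bigl(\|\partial_x c_\sim\|_\infty + \|\partial_y c_\sim\|_\infty\bigr) \leq C\,|\log A|^2\, A^{-1/6},
\end{equation*}
where the last inequality uses \eqref{Quantitative_Horizontally_homogenization} (the weaker $\|\partial_y c_\sim\|_\infty$ bound dictates the final rate). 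Combining the three steps produces \eqref{TT-TT1D_est}.

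The main technical point to verify is the global Lipschitz property of $F$ through the degenerate locus $\{\nabla c=0\}$, which is precisely what the assumption $\varphi(0)=0$ with $\varphi\in C^2$ was designed to handle. Justifying the Feynman--Kac representation on the mixed torus--slab geometry in the presence of the large drift $Au(y)\partial_x$ is treated exactly as in the proof of Lemma \ref{lem:integral_formula}(b), so it introduces no essentially new difficulty; the key simplification is that the large horizontal drift is invisible to the $Y$-marginal, which is what keeps the constant in front of $\|V^{(2)}-V_{\mathrm{eff}}^{(2)}\|_\infty$ uniform in $A$.
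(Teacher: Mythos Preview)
Your argument is correct and is in fact a cleaner route than the one taken in the paper. The paper splits $\TT-\TT_{1D}=\TT_\sim+(\langle\TT\rangle-\TT_{1D})$ and treats the two pieces separately: the remainder $\TT_\sim$ is controlled via enhanced dissipation for the Dirichlet semigroup $e^{t\mathcal{L}^\dagger}$ on the channel (Lemma~\ref{lem:integral_formula}(b) combined with the $L^2$ resolvent bound), together with energy estimates for $\|\nabla\TT\|_2$ and $\|\partial_x\nabla\TT\|_2$; the average part $\langle\TT\rangle-\TT_{1D}$ is then handled by a $1$D integrating-factor computation. This yields $\|\TT_\sim\|_\infty\lesssim A^{-1/4}$ and $\|\langle\TT\rangle-\TT_{1D}\|_\infty\lesssim|\log A|^2 A^{-1/6}$, the second of which dominates.

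You bypass the decomposition and the channel enhanced-dissipation machinery entirely by writing a single equation for $W=\TT-\TT_{1D}$ and invoking the probabilistic representation. The crucial observation, which the paper does not exploit at this step, is that the exit time $\tau$ from the slab depends only on the $Y$-marginal, whose drift $V^{(2)}$ is bounded by $\|\varphi\|_\infty$ independently of $A$; this is exactly what makes $\sup_{(x,y)}\E^{(x,y)}[\tau]=\|\TT\|_\infty$ uniform in $A$ and replaces the paper's barrier construction in Step~1. Everything then reduces to the Lipschitz bound on $p\mapsto\varphi(\chi|p|)\,p/|p|$ and the estimates \eqref{Quantitative_Horizontally_homogenization}, which both proofs share. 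Your approach loses the sharper intermediate estimate $\|\TT_\sim\|_\infty\lesssim A^{-1/4}$, but since the final rate is governed by $\|\partial_y c_\sim\|_\infty\lesssim|\log A|^2 A^{-1/6}$ anyway, nothing is lost in the statement of the proposition. One small point worth making explicit when you write this up is the barrier (or comparison) that turns the bounded-drift observation into an actual bound on $\E[\tau]$; the paper spells this out via the auxiliary function $W$ satisfying $\tfrac12\nu W''+V_{\mathrm{eff}}^{(2)}W'=5$, and some analogous one-line supersolution would close your argument cleanly.
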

%\textcolor{red}{Nondegenrate explain! Refer to case b) in the previous lemma. }
\begin{proof}
We organize the proof into several steps.

\noindent
\textbf{Step \# 1: Quantitative estimates on the solutions.} First we derive a bound over the deviation of the chemical attraction vector fields
\begin{align}\label{V-V_eff_Linfty}
||V-V_{\mathrm{eff}}||_\infty\leq \frac{C(L,\nu^{-1},\chi,||\varphi||_{C^2})|\log A|^2}{A^{1/6}} ||\na n_{\sim}||_\infty;\\
||V_\sim||_\infty+||\pa_x V||_\infty\leq \frac{C(L,\nu^{-1},\chi,||\varphi||_{C^2})|\log A|^2}{A^{1/6}}||\pa_x\na n_{\sim}||_\infty .\label{V_x_Linfty}
\end{align}
The bound \eqref{V-V_eff_Linfty} is a natural consequence of the estimate \eqref{Quantitative_Horizontally_homogenization} and the mean value theorem, %Here we use the fact that $||\na c_\sim||_\infty \leq C(\log A)^2A^{-1/6}(||\na n_{\sim}||_\infty+||\pa_{xxx} n_\sim||_2)$, and
\begin{align}
||V-V_{\mathrm{eff}} ||_\infty
\leq&\norm{\frac{\varphi(\chi|\na c|)}{|\na c|}(\na c-\na \lan c\ran)+\left(\frac{\varphi(\chi|\na c|)}{|\na c|}-\frac{\varphi(\chi|\na\lan  c\ran |)}{|\na \lan c\ran|}\right)\na \lan c\ran}_\infty\nb\\
%\leq& \textcolor{black}{C \norm{\na \left(\varphi(\chi|\cdot|)\frac{\cdot }{|\cdot|}\right)}_\infty||\na c-(0,\pa_y \lan c\ran)||_\infty= C(\chi,||\varphi||_{C^1})||\na(\lan c\ran+c_\sim)-(0,\pa_y\lan c\ran)||_\infty}\\
\leq& C(\chi,||\varphi||_{C^2})||\na c_\sim||_\infty\leq C(||\varphi||_{C^2},\chi,\nu,L)\frac{|\log A|^2}{A^{1/6}}||\na n_\sim||_\infty.
\end{align}
Next we estimate $||\pa_x V_\sim||_\infty$ with \eqref{Quantitative_Horizontally_homogenization}, and the fact that $\varphi(0)=0$,
\begin{align*}
||\pa_x V_\sim||_\infty\leq & \norm{\left(\frac{\varphi(\chi z)}{z}\right)'\bigg|_{z=|\na c|}\left(\frac{\na c}{|\na c|}\cdot \pa_x\na c\right)\na c}_{\infty}+\norm{\frac{\varphi(\chi\cdot)}{|\cdot|}}_\infty||\pa_x\na c||_\infty\\
\leq&C(||\varphi||_{C^2},\chi)||\pa_x\na c_\sim||_\infty\leq C(||\varphi||_{C^2},\chi,\nu,L)\frac{|\log A|^2}{A^{1/6}}||\pa_x\na n_\sim||_\infty.
\end{align*}
Note that by fundamental theorem of calculus, $||V_\sim||_\infty\leq ||\pa_x V_\sim||_\infty L$. Hence we have  obtained \eqref{V_x_Linfty}.

Now we estimate the $L^\infty$ and $H^1$ norms of the solutions $\TT$ \eqref{T_eq} and $\TT_\sim=\TT-\lan \TT\ran$. To derive the $L^\infty$ bound, we consider the following barrier
\begin{align*}
\nu W''(y)+V^{(2)}_{\mathrm{eff}}(y) W'(y)=5,\quad W(0)=W(\LL)=0.
\end{align*}
Here $V_{\mathrm{eff}}^{(2)}$ is the second component of $V_{\mathrm{eff}}$ \eqref{T_1D_2}.
By elliptic maximum principle, we observe that $W\leq 0$.
This equation is explicitly solvable with integration factors. The solution $W(y)$ and its derivative $W'(y)$ are bounded
\begin{align}\label{W_est}
||W'||_{L^\infty([0,\LL])}+||W||_{L^\infty([0,\LL])}\leq C(\nu, |\mathbb{L}|, ||\varphi||_\infty).%||V_\mathrm{eff}||_\infty
\end{align}
Consider the sum $\TT+W$, which satisfies
\begin{align*} \nu\de (\TT+W)+&{Au(y)\pa_x(\TT+W)}+V^{(2)} \pa_y (\TT+W)\\
&+(V^{(2)}_{\mathrm{eff}}-V^{(2)})\pa_y W +V^{(1)}\pa_x (\TT+W)=4.
\end{align*}
Since $W\leq 0$ and $\TT\geq 0$ by maximum principle, it is enough to derive the upper bound for $\TT+W$.
Rearranging the terms, we get
\begin{align*}
\nu\de (\TT+W)+{Au(y)\pa_x(\TT+W)}+V^{(2)} \pa_y (\TT+W) &+V^{(1)}\pa_x (\TT+W)=4+(V^{(2)}-V^{(2)}_{\mathrm{eff}})\pa_y W,\\
(\TT+W)(x,y=0)& =(\TT+W)(x,y=\LL)=0, \quad\forall x\in L\Torus.%_{\mathrm{eff}}T
\end{align*}
Combining the $L^\infty$ estimate \eqref{W_est} and the $||V-V_{\mathrm{eff}}||_\infty$ estimate \eqref{V-V_eff_Linfty}, applying the maximum principle for elliptic equations, and choosing $A_0(\nu,L,\chi, ||\varphi||_{C^1}, ||n_\sim||_{W^{1,\infty}})$ large enough,
we obtain that $T+W\leq 0$ and therefore,
\begin{align}\label{T_infty}
||\TT||_\infty\leq ||W||_\infty\leq C(\nu,L,||\varphi||_\infty). % ||V_{\mathrm{eff}}||_\infty_\infty
\end{align}
Once the $L^\infty$ bound is derived, the $L^2$ energy estimate yields the $H^1$ bound. Indeed, multiplying the equation \eqref{T_eq} by $T$ and integrating in space, we apply the H\"older inequality, Young's inequality  and integration by parts to obtain
\begin{align*}
\nu||\na \TT||_{L^2}^2\leq& ||\TT||_{L^1}+||V||_{L^\infty}||\na \TT||_{L^2}||\TT||_{L^2}\textcolor{black}{+\iint A u(y)\pa_x\left(\frac{\TT^2}{2}\right) dxdy}\\
\leq& L^2 ||\TT||_{L^\infty}+\frac{1}{4}\nu||\na \TT||_{L^2}^2+\frac{1}{\nu}L^2||V||_{L^\infty}^2|| \TT||_{L^\infty}^2.
\end{align*}
Note that the second term on the right hand side can be absorbed by the left hand side. We recall the estimate of $||\TT||_{L^\infty}$ \eqref{T_infty} and the $L^\infty$ norm bound $||V||_{L^\infty}\leq ||\varphi||_{L^\infty}$, and end up with
\begin{align}
||\na \TT_\sim||_2^2+||\pa_y \lan \TT\ran||_2^2\leq& C||\na \TT||_2^2\leq C(\nu, L,\chi,||\varphi||_{C^1})(1+||\TT||_\infty^2)\nb\\
\leq &C(\nu,L,\chi, ||\varphi||_{C^1}).\label{T_H1}%_{H^1}^2\textcolor{red}{use\, C},\textcolor{red}{Drop???||n||_{W^{1,\infty}}}
\end{align}
Next we estimate higher regularity norm of the solution. By taking the $\pa_x$ derivative of the equation \eqref{T_eq} and testing it with $\pa_x \TT$, we obtain
\begin{align*}%\nonumber
\nu||\na \pa_x \TT||_2^2\leq &(||\lan V\ran||_\infty+||V_\sim||_\infty)||\pa_x\TT_\sim||_2||\na \pa_x \TT||_2+ ||\pa_x V_\sim||_\infty||\pa_x \TT_\sim||_2||\na \lan \TT\ran||_2\\
&+||\pa_xV_\sim||_\infty||\pa_x\TT_\sim||_2||\na \TT_\sim||_2\\
\leq& C||\pa_x \TT_\sim||_2\bigg(||\lan V\ran||_\infty^2||\pa_x \TT_\sim||_2+||V_\sim||_\infty^2||\pa_x \TT_\sim||_2+||\pa_x V_\sim||_\infty||\na \lan \TT\ran||_2\\&\quad\quad\quad\quad+||\pa_xV_\sim||_\infty||\na \TT_\sim||_2\bigg)+\frac{1}{4}\nu||\na \pa_x \TT||_2^2.
%\label{aux623}
\end{align*}%\textcolor{red}{Do one more step on the terms $\lan V\ran \pa_x\TT\na\pa_x\TT$}  \textcolor{black}{Done.}
By recalling the estimates \eqref{V_x_Linfty}, \eqref{T_H1}, and the fact that $||V||_\infty\leq ||\varphi||_\infty$, we infer the following estimate:
\begin{align}
%||\pa_x\na \TT_\sim||_2^2=
||\pa_x\na  \TT||_2^2\leq C(\nu,L,\chi, ||\varphi||_{C^2},||n||_{W^{2,\infty}}).\label{T_H2}%\textcolor{red}{Use\, }C_{H^2}^2
\end{align}
This concludes the first step.

\noindent
\textbf{Step \# 2: Convergence of solutions.} First, we observe that $\TT_\sim$ solves the following equation
\begin{align}
\nu \de \TT_\sim+Au(y) \pa_x\TT_\sim=-\lan V\ran\cdot \na \TT_\sim-V_\sim\cdot \na \lan \TT\ran-(V_\sim\cdot \na \TT_\sim)_\sim,\quad
\TT_\sim\big|_{y=0,\LL}=0.\label{TT_sim_eq}
\end{align}
Recall our notation $\mathcal{L}^\dagger$ for the differential operator $\mathcal{L}^\dagger=\frac{1}{2}\nu\de +Au(y)\pa_x$ subject to Dirichlet boundary conditions at $y=0,\LL$.  We also recall Theorem 1.1 in \cite{AlbrittonBeekieNovack21}, which provides enhanced dissipation estimates for the solutions to passive scalar equations subject to shear flows and Dirichlet boundary conditions in the channel. The explicit estimate is identical to \eqref{ED_literature_nondegenerate}, so we omit the details. Combining this and the argument in the proof of Theorem \ref{thm:Horizontally_homogenized} yields the following enhanced dissipation estimate for $\mathcal{L}^\dagger:$ %{\color{red} It would be better to state what exactly the main result of the paper we refer to says, and include a specific label like Theorem XX from ....}
% the main result  \cite{He21} and Direct adaptation of the argument in \cite{He21} and the proof of Theorem \ref{thm:Horizontally_homogenized} yields
 %\footnote{\textcolor{black}{In \cite{BCZ15}, the enhanced dissipation estimate in the channel subject to the Neumann boundary condition is proven. By applying similar argument, one can derive the estimate in the Dirichlet case. In this case, the dangerous boundary term discussed in \cite{BCZ15} is not present. Or one can use the resolvent estimates as in \cite{Wei18}, \cite{He21}.}}
\begin{align}\label{ED_channel}
||e^{t\mathcal{L}^\dagger} \eta_\sim||_{L^2}\leq C e^{-\kappa  \nu^{1/2}A^{1/2}L^{-3/2}t}||\eta_\sim||_{L^2},\quad \forall t\in[0,\infty). %|\log \frac{\nu}{AL}|^{-2}
\end{align}
%\textcolor{red}{$L^2$ bound?}
%\textcolor{black}{Since the coefficients and forcing of the equation \eqref{TT_sim_eq} is smooth $C^\infty$, we have that the right hand side of \eqref{TT_sim_eq} is smooth by elliptic regularity. }
Now we apply the estimate \eqref{ED_channel} and Lemma \ref{lem:integral_formula} to derive that
\begin{align*}
||\TT_\sim||_2\leq &\int_0^\infty ||e^{t\mathcal{L}^\dagger}(-(V_\sim\cdot \na \TT_\sim)_\sim-\lan V\ran\cdot \na \TT_\sim-V_\sim\cdot \na \lan\TT\ran)||_2dt\\
\leq &\frac{ C(\nu,L)}{A^{1/2}}(||V_\sim||_\infty||\na \TT_\sim||_2+||\lan V\ran ||_\infty ||\na \TT_\sim||_2+||V_\sim||_\infty ||\pa_y \lan \TT\ran ||_2).%|\log A|^2
\end{align*}Applying estimate \eqref{T_H1}, % as follows\begin{align}||\pa_y \lan \TT\ran||_2^2\leq C||\na \TT||_2^2\leq C C_{H^1}^2. \end{align}
and the fact that $||V_\sim||_\infty+||\lan V\ran||_\infty\leq C||\varphi||_{C^1}$, we obtain that
\begin{align}\label{T_sim_2}
||\TT_\sim||_2\leq \frac{1}{A^{1/2}} C( \nu^{-1},L,\chi,||\varphi||_{C^1}).%|\log A|^2
\end{align}

Next we derive the $\dot H _x^1$-estimate of $\TT$. Taking the $\pa_x$ derivative of \eqref{T_eq} and applying formula \eqref{formula} and the estimate \eqref{ED_channel}, we have that
\begin{align*}
||\pa_x\TT_\sim||_2\leq &\int_0^\infty ||e^{t\mathcal{L}^\dagger}(-\pa_x(V_\sim\cdot \na \TT_\sim)_\sim-\lan V\ran\cdot \na\pa_x \TT_\sim-\pa_x V_\sim\cdot \na \lan\TT\ran)||_2dt\\
\leq &\frac{ C(\nu^{-1},L)}{A^{1/2}}(||\pa_x V_\sim||_\infty||\na \TT_\sim||_2+||V_\sim||_\infty||\pa_x \na \TT_\sim||_2+||\lan V\ran ||_\infty ||\na\pa_x  \TT_\sim||_2\\
&+||\pa_x V_\sim||_\infty ||\pa_y \lan \TT\ran ||_2).%|\log A|^2|\log A|^2
\end{align*}By estimates  \eqref{V_x_Linfty}, \eqref{T_H1}, \eqref{T_H2}, and we obtain
\begin{align}\label{pa_xTT_small}
||\pa_x\TT_\sim||_2\leq \frac{1}{A^{1/2}}C(\nu^{-1},L,\chi,||\varphi||_{C^2},||n ||_{W^{2,\infty}}).%22
\end{align}

To derive the estimate of $||\TT_\sim||_\infty$, we recall that
\begin{align}||\TT_\sim||_{L^\infty(L\Torus\times[0,\LL])}\leq C(L)||\pa_{xy}\TT_{\sim}||_{L^2(L\Torus\times[0,\LL])},\label{Sobolev_xy}
\end{align}
which is a direct consequence of the fundamental theorem of calculus and H\"older inequality. % is small?
Now we estimate the quantity $||\pa_{xy}\TT_\sim||_2$.  Similarly to the derivation of  \eqref{T_H2}, by taking the $\pa_x$-derivative of the equation \eqref{T_eq}, testing it against $\pa_x \TT_\sim$, and recalling estimates $||V||_\infty\leq ||\varphi||_\infty,$  \eqref{V_x_Linfty}, \eqref{T_H1},   \eqref{pa_xTT_small}, we obtain: %\textcolor{red}{Add similar to ref, we have that, add a comment about that we pick a simpler bound on powers of A. }\textcolor{black}{Done}
\begin{align*}
\nu||\na \pa_x \TT_\sim||_2^2
\leq& C(||\pa_x V_\sim||_\infty||\na \lan \TT\ran||_2+||\pa_x \TT_\sim||_2||\lan V\ran||_\infty^2+||\pa_xV_\sim||_\infty||\na \TT_\sim||_2\\&+||V_\sim||_\infty^2||\pa_x \TT_\sim||_2)||\pa_x \TT_\sim||_2\\
\leq &\frac{1}{A^{1/2}}C (\nu,L,\chi,||\varphi||_{C^2},||n_\sim||_{W^{2,\infty} }).
\end{align*}
Here we choose the non-optimal factor $A^{-1/2}$ for simplicity; we could have replaced by $|\log A|^2 A^{-2/3}.$
Hence by \eqref{Sobolev_xy}, we have that \begin{align}
||T_\sim||_\infty\leq C(\nu,L,\chi,||\varphi||_{C^2},||n_\sim||_{W^{2,\infty}})\frac{1}{A^{1/4}}\label{remainder_est}.
\end{align}

Next we note that $\lan \TT\ran$ solves the equation
\begin{align*}
\nu\pa_{yy}\lan \TT\ran+\lan V^{(2)}\ran\pa_y \lan \TT\ran+\lan V_\sim\cdot \na \TT_\sim\ran=-1,\quad \lan\TT\ran(y=0)=\lan\TT\ran(y=\LL)=0.
\end{align*} The difference between $\lan \TT\ran $ and $\TT_{1D}$ \eqref{T_1D} satisfies
\begin{align*}
\nu\pa_{yy} (\lan \TT\ran-\TT_{1D})+ \lan V^{(2)}&\ran\pa_y(\lan \TT\ran -\TT_{1D})+(\lan V^{(2)}\ran-V^{(2)}_{\mathrm{eff}})\pa_y \TT_{1D}+\lan V_\sim\cdot \na \TT_\sim \ran  =0,\\
(\lan \TT\ran-\TT_{1D})(y=0)=&(\lan\TT\ran-\TT_{1D})(y=\LL)=0.
\end{align*}
By integration factor method, one can derive the bound
\begin{align}\label{pa_y_TT_1D_infty}
||\pa_y \TT_{1D}||_\infty\leq C(\nu,L,\chi,||\varphi||_{C^1}).
\end{align}
Define $\mathcal{F}:=\textcolor{black}{-(\lan V^{(2)}\ran-V_{\mathrm{eff}}^{(2)})}\pa_y \TT_{1D}-\lan V_\sim\cdot \na \TT_\sim \ran$, then by
\eqref{V-V_eff_Linfty}, \eqref{V_x_Linfty}, \eqref{T_H1},  and \eqref{pa_y_TT_1D_infty}, \begin{align}\label{F_infty}
||\mathcal{F}||_2\leq \frac{|\log A|^2}{A^{1/6}} C(\chi,||\varphi||_{C^2},\nu^{-1},L,||u'||_\infty,||n_\sim||_{W^{2,\infty}}).
\end{align}
Application of the integration factor method yields that:%\leq C\frac{1}{A^{...}} o show
\begin{align*}
(\lan \TT\ran-\TT_{1D})(y)=&\frac{2}{\nu}\int_0^y\int_0^w\mathcal{F}(z)e^{-\int_z^w \frac{2}{\nu}\lan V^{(2)}\ran d\tau}dzdw+C_1 \int_0^y e^{-\frac{2}{\nu}\int_0^w \lan V^{(2)}\ran d\tau}dw;\\
C_1=&\textcolor{black}{-\frac{2}{\nu}\frac{\int_0^{\LL}\int_0^w\mathcal{F}(z)e^{-\frac{2}{\nu}\int_z^w \lan V^{(2)}\ran d\tau}dzdw}{\int_0^{\LL} e^{-\frac{2}{\nu}\int_0^w\lan V^{(2)}\ran d\tau}dw}}.
\end{align*}
%\textcolor{black}{I have replaced the $L$ by $\LL$ in the last line because we are on the domain $[0,\LL]$. }
Hence, by \eqref{F_infty}, and $||V||_\infty\leq C||\varphi||_{C^1}$, we obtain
\begin{align*}
||\lan \TT\ran-\TT_{1D}||_{L_y^\infty}\leq \frac{|\log A|^{2}}{A^{1/6}}C(\chi,||\varphi||_{C^2},\nu^{-1},L,||u'||_\infty,||n_\sim||_{W^{2,\infty}}).
\end{align*}%45 5  \cap H_x^3
Combining it with \eqref{remainder_est}, we  obtain the $L^\infty$ estimate \eqref{TT-TT1D_est}.
%By Moser iteration, we have the estimate \eqref{TT-TT1D_est}.
\end{proof}

\begin{proof}[Proof of Theorem \ref{theorem_first_hitting_time_full_system}]
%{\color{red} Please adjust this proof for the result avoiding constraint on the initial point}
%\textcolor{red}{Add a comment about the agent starting inside the egg strip.}\textcolor{black}{It is harder than expected because near the egg center $u(L/2)=0$.} 2 2
In the main part of the proof, we focus on the case where the starting position $(x_0,y_0)$ is outside the target zone $[\frac{L}{2}-\delta,\frac{L}{2}+\delta]$. %(H: Change from $[\frac{L}{2}-2\delta,\frac{L}{2}+2\delta]$ to $[\frac{L}{2}-\delta, \frac{L}{2}+\delta]$.)
We will provide comments concerning the case where the starting position is inside $[\frac{L}{2}-\delta,\frac{L}{2}+\delta]$ at the end.
\ifx
The $1$-dimensional first hitting time is defined as follows
\begin{align}\label{T_1D_chemo}
T_{1D}^\chi=\min\left\{t\bigg|Y_t=\frac{L}{2}\pm \delta,\quad Y_t \text{ solves }\right\}.
\end{align}
\footnote{
\textcolor{red}{To show the convergence, we need to show that $|\mathbb{E}T_{1D}^{\mathrm{eff}}( Y_t\in S^\pm)-\mathbb{E}T_{1D}^{\mathrm{eff}}(Y_t=y^\pm )|\leq CA^{-q}$ and $|\mathbb{E}T_{2D}^{A}(Y_t=y^\pm)-\mathbb{E}{T}^{\mathrm{eff}}_{1D}(Y_t=y^\pm)|\leq C|\log A|^2 A^{-1/6}$. The first estimate can be derived by integration factor. The second one can be shown by the PDE.
}}
 The expected $1$-D hitting time can be explicitly calculated with the Dynkin formula and the equation \eqref{T_1D}.
\begin{align*}
T_{2D}^{A;\chi}=\min_t\{t| (X_t,Y_t)\in B\left(\left(\frac{L}{2}, \frac{L}{2}\right);\delta\right),\quad (X_t,Y_t) \text{ solves }\eqref{SDE_full}\},
\end{align*}
and the $1$-dimensional time \eqref{T_1D_chemo},
\fi
 To relate the 2-dimensional expected first hitting time \eqref{Defn_T_2D_A_chi} to the 1-dimensional hitting time  \eqref{T_1D_chi},  we define another time
\begin{align*}
T_{2D;0}^{A;\chi}=\min_t\left\{t\big||Y_t-{L}/{2}|\leq \delta\right\}.
\end{align*}
We observe that $\mathbb{E}^{(x_0,y_0)}T_{2D;0}^{A;\chi}\leq \mathbb{E}^{(x_0,y_0)} T_{2D}^{A;\chi}$. Moreover, by the Dynkin's formula,  $\mathbb{E}^{(x_0,y_0)}T_{2D;0}^{A,\chi}$ solves the following PDE
\begin{align*}
\nu\de \TT+V\cdot \na \TT+Au(y)\pa_x \TT=&-1,\quad\
\TT(x,y=\frac{L}{2}+\delta)=\TT(x,y=\frac{L}{2}-\delta)=0,
\end{align*}
%By the Dynkin's formula, $\mathbb{E}^{(x_0,y_0)}T_{2D;0}^{A;\chi}$ solves
which is the partial differential equation $\eqref{T_eq} _{\LL=L-2\delta}$ modulo suitable shifting of $y$-coordinate. Similarly, the average $\mathbb{E}^{y_0} T_{1D}^\chi$ solves the ordinary differential equation $\eqref{T_1D}_{\LL=	L-2\delta}$  modulo suitable shifting of $y$-coordinate. By Proposition \ref{pro:elliptic_eq}, we have that \begin{align}\label{lower_bound}
\mathbb{E}^{y_0} T_{1D}^\chi\leq \mathbb{E}^{(x_0,y_0)}T_{2D;0}^{A;\chi}+|\mathbb{E}^{(x_0,y_0)}T_{2D;0}^{A;\chi}-\mathbb{E}^{y_0} T_{1D}^\chi|\leq \mathbb{E}^{(x_0,y_0)}T_{2D}^{A;\chi}+\frac{C|\log A|^2}{A^{1/6}}.
\end{align}

Next we apply an idea similar to one in the proof of Theorem \ref{thm:First_hitting_time_passive_transport} to estimate the upper bound of the average first hitting time $T_{2D}^{A;\chi}$. We decompose the searching process into individual trips. For the $0$-th trip, the agent reaches the level $y=\frac{L}{2}\pm \delta$. The expected time of the $0$-th trip is $\mathbb{E}^{(x_0,y_0)}T_{2D;0}^{A;\chi}$. In the first trip, the agent moves from $y=\frac{L}{2}\pm {\delta}$ to $y^\pm=\frac{L}{2}\pm\delta\mp A^{-1/3}$.
The average time for the agent to go from $y=\frac{L}{2}+\delta$ to $y=\frac{L}{2}+\delta-A^{-1/3}$ is estimated as follows.
To set up application of Proposition \ref{pro:elliptic_eq}, we consider the following ODE on $[0,  {L}-2\delta+2A^{-1/3} ]$:
\begin{align}\label{phi623}
\nu \phi''(y)+  {V}^{(2)} _{\mathrm{eff}}(y)\phi'(y)=-1,\quad\
\phi(0)=\phi \left({L}-2\delta+2A^{-1/3}  \right)=0.
\end{align}
%Here $\widetilde{V}_{\mathrm{eff}}$ is the even reflection of the vector field ${V}_{\mathrm{eff}}$ around the line $y=\frac{L}{4},\,\frac{3L}{4}$.
By the Dynkin's formula, the expected 1D hitting time from $y=y^\pm\pm A^{-1/3}$ to the point $y^\pm$ is $\phi(A^{-1/3})$. The equation \eqref{phi623} has a solution \begin{align*}
\phi(y)= &-\frac{1}{\nu}\int_0^y\int_0^w e^{-\int_z^w \frac{1}{\nu}V_{\mathrm{eff}}^{(2)} d\tau}dzdw+C_1 \int_0^y e^{-\int_0^w \frac{1}{\nu}V_{\mathrm{eff}}^{(2)} d\tau}dw;\\
C_1=& \frac{\int_0^{L-2\delta+2A^{-1/3}}\int_0^we^{-\frac{1}{\nu}\int_z^w V_{\mathrm{eff}}^{(2)}d\tau}dzdw}{\nu\int_0^{L-2\delta+2A^{-1/3}} e^{-\frac{1}{\nu}\int_0^w V_{\mathrm{eff}}^{(2)}d\tau}dw}.
\end{align*}% so that $||\phi'||_\infty\leq C(\nu, L,|| \varphi||_{C^1})$.
As a result, we have that
\begin{align}\label{tau_out_chemo}
\phi(A^{-1/3})\leq A^{-1/3}C(\nu, L,\chi,||\varphi||_{C^2}).
\end{align}
Hence by Proposition \ref{pro:elliptic_eq}, the expected first hitting time from $y^\pm\pm A^{-1/3}$ to $y^\pm$ is less than $C(\nu^{-1}, L,\chi,||\varphi||_{C^2},||u'||_\infty,||n||_{W^{2,\infty}})|\log A|^2 A^{-1/6}$. As a result, as $A\rightarrow \infty$, the time spent for the first trip converges to zero.

% (\underline{H: The original expressions are not quite accurate. So I rephrase this paragraph})
Once the agent reaches the level $y^\pm$, we study the dynamics of the agent within the searching zone strips $(y^\pm \pm A^{-1/3})$ in each trip.
%First we estimate the probability that the agent leaves the searching zone within time $5L u_m^{-1}A^{-1}$.
To this end, we consider the process
\begin{align*}
dY_t&= V^{(2)}(X_t,Y_t) dt+\sqrt{\nu}dB_t^{(2)}.%\chi \lan\ran_{\mathrm{eff}}
\end{align*}
Here $(X_0,Y_0)$ is the starting position and $Y_0=y^\pm$.  Recall that $\tau_{in}$ is the first hitting time by the agent of the boundary of the strip and $u_m$ \eqref{u_m} is the minimum of $|u|$ in the strip $(y^\pm \pm A^{-1/3})$, i.e., $u_m=\min|u(y)|\geq u_d\delta/2$.  %the magnitude of the shear
Define $F_i$ to be the event
\begin{align}\label{F_i_thm_3}
F_i:=\left\{\tau^{in}\leq \frac{5L}{Au_m}\right\}\cup \left\{\tau^{in}\geq\frac{5L}{Au_m}, d_\rr(X_{\frac{5L}{Au_m}},X_0)\leq L\right\}.
\end{align}
We observe that if $F_i^c$ happens, then the search is successful in the $i$-th trip. Similarly to the derivation of \eqref{shortprob}, we decompose the event $F_i$ \eqref{F_i_thm_3} into several subcases, and estimate the probability of them individually.
%(H.: Here the definition is similar to \eqref{F_i} but the constant is slightly different. Here we have `$5$' instead of `$2$'.)

% {\color{red} I think it is better to recall the definition of $F_i$ here} %We estimate the first hitting time for $Y_t$ to reach the boundary.
\ifx
Without loss of generality, we shift the starting point to zero.
Now starting from the origin, and once the  $|Y_t|^2$ reaches the level $A^{-2/3}$, the agent leave the searching zone. By the Ito formula, the time evolution of $|Y_t|^2$ is the following
\begin{align*}
d|Y_t|^2=2Y_t (\chi \lan V_2\ran_{\mathrm{eff}}(Y_t)dt+\sqrt{\nu}dB_t^{(2)})+\nu dt. ?%2
\end{align*}
Now we have that
\begin{align*}
\mathbb{E} |Y_t|^2\leq 0+2\chi\frac{1}{A^{1/3}}||\lan V_2\ran_{\mathrm{eff}}||_\infty t+\nu t.
\end{align*}
Now for $\forall t\leq \frac{10L}{u_m A}$, we have that
\begin{align*}
\mathbb{P}(|Y_t|^2\geq A^{-2/3})\leq A^{2/3}\mathbb{E}|Y_t|^2\leq \frac{20L}{u_m A^{1/3}}(\chi|| V^{(2)}_{\mathrm{eff}}||_\infty +\nu).
\end{align*}
\fi%\lan\ran
%Hence
%\begin{align*}
%\mathbb{P}\left(\tau^{in}\leq \frac{5L}{u_m A}\right)\leq \frac{20L}{u_m A^{1/3}}(\chi||\lan V_2\ran_{\mathrm{eff}}||_\infty +\nu).
%\end{align*}\textbf{This is not justified}
%
%
%\textcolor{red}{Can we show that the advection can only cover half the distance and the Brownian motion have to over the remaining half?}
Since $||  V ||_\infty\leq C,$ %$||V||_\infty \frac{5L}{u_m A}\leq \frac{1}{2A^{1/3}}$, hence%_{\mathrm{eff}}_{\mathrm{eff}}
\begin{align}
\mathbb{P}\left(\tau^{in}\leq \frac{5L}{u_m A}\right)\leq&\mathbb{P}\left(|Y_{\tau^{in}}|=\bigg|\int_0^{\tau^{in}} V^{(2)} (X_t,Y_t)dt+\sqrt{\nu}\int_0^{\tau^{in}}dB_t\bigg|\geq A^{-1/3},\quad \tau^{in}\leq \frac{5L}{u_m A}\right)\nb\\
\leq&\mathbb{P}\left(\bigg|\int_0^{\tau^{in}} V^{(2)} (X_t, Y_t)dt\bigg|\geq \frac{1}{2A^{1/3}}\text{ for some }\tau^{in}\leq \frac{5L}{u_m A}\right)\nb\\
&+\mathbb{P}\left(\sqrt{\nu}|B_{\tau^{in}}|\geq\frac{1}{2A^{1/3}}\text{ for some  }\tau^{in}\leq \frac{5L}{u_m A}\right)=:P_1+P_2.\label{P_1_P_2}%_{\mathrm{eff}}_{\mathrm{eff}}
\end{align}
The first term $P_1$ can be estimated using the fact that $||V||_\infty\leq C$ as follows:%_{\mathrm{eff}}_{\mathrm{eff}}
\begin{align*}
P_1\leq \mathbb{P}\left(||V||_\infty\frac{5L}{u_m A}\geq \frac{1}{2A^{1/3}}\right)=0;
\end{align*}
the probability is zero if $A$ is chosen large enough compared to $||V||_\infty,\, L,\, u_m$. %_{\mathrm{eff}}  has probability zero because the time is too short and t
The second term $ P_2$ in \eqref{P_1_P_2} can be estimated with reflection principle of Brownian motion as follows
\begin{align*}
%\mathbb{P}\left(\tau^{in}\leq \frac{5L}{u_m A}\right)
P_2\leq\mathbb{P}\left(\min_{t}\{t|\sqrt{\nu}B_t=\frac{1}{2}A^{-1/3}\}\leq\frac{5L}{u_m A}\right)\leq 2\mathbb{P}\left(\sqrt{\nu}B_{\frac{5L}{Au_m}}\geq \frac{1}{2}A^{-1/3}\right)\leq \frac{10\nu L}{A^{1/3}u_m}.
\end{align*}
As $A\rightarrow \infty$, this approaches $0$.
%\textcolor{red}{Next we estimate the probability of the event $F_i^c$ \eqref{shortprob} \begin{align}
%\mathbb{P}(F_i^c)\geq \mathbb{P}( d_\rr(X_\frac{2L}{Au_m}, X_0)>  L)\geq \mathbb{P}(|\sqrt{\nu}B_{\frac{2L}{Au_m}}|\leq \frac{L}{2})\geq 1/10.
%\end{align}
%The remaining part of the proof is similar to the proof of Theorem \ref{thm:First_hitting_time_passive_transport}. Hence we omit the details. } \ifx
Now the probability
\begin{align*}
\mathbb{P}\left(\tau^{in}\geq \frac{5L}{Au_m},\, d_\rr(X_{\frac{5L}{Au_m}},X_0)\leq L\right)\leq\mathbb{P}\left(\sqrt{\nu}B_{\frac{5L}{u_m A}}\geq L\right)\leq \frac{C(\nu, u_m, L,|| V||_\infty )}{A}.%^{1/3^2}  _2 \lan\ran_{\mathrm{eff}}, \chi
\end{align*}
Note that in the above, due to $\|V\|_\infty \leq C,$ we can absorb any contribution due to chemotactic advection.
Now we obtain an estimate similar to \eqref{shortprob}. Application of the argument parallel to that in the proof of Theorem \ref{thm:First_hitting_time_passive_transport} leads to
\begin{align}
\mathbb{E}^{(x_0,y_0)}T_{2D}^{A;\chi}\leq \mathbb{E}^{y_0} T_{1D}^\chi+C(\nu^{-1},L,\chi,||u'||_\infty,||\varphi||_{C^2},||n||_{W^{2,\infty{}}})|\log A|^2A^{-1/6}.
\end{align}
Combining it with the lower bound \eqref{lower_bound},  the convergence of the expected first hitting time follows.%\fiupper bound

Finally, we comment on the case where $y_0\in[L/2-\delta,L/2+\delta]$. We apply the same adjustments as in the proof of Theorem \ref{thm:First_hitting_time_passive_transport}, and recall the  definitions of the searching strips therein. The main difference is that the expected first hitting time from the starting point $(x_0,y_0)$ to the center of the searching strip is bounded as follows, $$\mathbb{E}[\tau_0]\leq C(\nu^{-1}, L,\chi,||\varphi||_{C^2},||u'||_\infty,||n||_{W^{2,\infty}})|\log A|^2 A^{-1/6}.$$ The explicit estimate is similar to the treatment of the first trip above. The remaining estimates are similar to the ones yielding Theorem \ref{thm:First_hitting_time_passive_transport} and we omit them for the sake of brevity.

\end{proof}

{\color{black}
Finally, we prove that the average effective searching time $\mathbb{E}^{y_0}T_{1D}^\chi$ in Theorem \ref{theorem_first_hitting_time_full_system} is less than the $1D$-Brownian motion hitting time $\mathbb{E}^{y_0}T_{1D}$ in Theorem \ref{thm:First_hitting_time_passive_transport}.
\begin{pro}
If the egg density $\lan n\ran(y)$ is symmetric about the point $y=L/2$ and supported inside the strip $y\in[L/2-\delta,L/2+\delta]$, then $\mathbb{E}^{y_0}T_{1D}^\chi\leq \mathbb{E}^{y_0}T_{1D}$ for $y_0\notin[L/2-\delta, L/2+\delta]$.
\end{pro}
\begin{proof}We decompose the proof into two steps. To simplify the notation, we consider the problem in a shifted coordinate system so that $y\in [-L/2, L/2]$, and $\text{support}(\lan n\ran)\in[-\delta,\delta]$.

\noindent \textbf{Step \# 1:} We show that the chemical gradient has a favorable sign, i.e., $y\pa_yc_{\text{eff}}\leq 0$ for $\forall y\in[-L/2, -\delta]\cup[\delta, L/2]$. The chemical equation on the shifted domain $[-L/2,L/2]=L\Torus$ reads as follows
\begin{align}\label{c_eff_Torus}
-\nu \pa_{yy}c_{\text{eff}}+c_{\text{eff}}=\lan n\ran,\quad \lan n\ran(y)=\lan n\ran(-y).
\end{align}%In general, the solution to the elliptic problem  involves an infinite sum and it is hard to keep track of the monotonicity property of $\pa_y c_{\text{eff}}$. Our idea is to
We consider the candidate $c_{\text{eff}}=c_{\text{eff};\rr}+\mathcal{R}(y)$. Here $c_{\text{eff};\rr}$ solves \eqref{c_eff_Torus} on the real line $\rr$ subject to the source $\lan n\ran$ supported in $[-\delta, \delta]$. The homogeneous remainder $\mathcal{R}(y)$ corrects the boundary conditions. The variation of parameters method yields that:
\begin{align}
c_{\text{eff};\rr}(y)=& \frac{1}{2\sqrt{\nu}}\int_{y}^{\infty} e^{\frac{1}{\sqrt{\nu}}(y-z)}\lan n\ran (z)dz+\frac{1}{2\sqrt{\nu}}\int_{-\infty}^{y}e^{-\frac{1}{\sqrt{\nu}}(y-z)}\lan n\ran(z)dz, y\in [-L/2, L/2].
\end{align}Since $c_{\text{eff};\rr}$ is even, we have $c_{\text{eff};\rr}(y=-L/2)=c_{\text{eff};\rr}(y=L/2)$, and $\pa_y c_{\text{eff};\rr}(y=-L/2)=-\pa_y c_{\text{eff};\rr}(y=L/2)$. Further note that $\lan n\ran (y)=0$ for $y$ close to $\pm L/2$, so the equation \eqref{c_eff_Torus} yields that $\pa_{y}^{2}c_{\text{eff};\rr}(L/2)=\frac{1}{\nu} c_{\text{eff};\rr}(L/2)=\frac{1}{\nu} c_{\text{eff};\rr}(-L/2)=\pa_{y}^{2m}c_{\text{eff};\rr}(-L/2)$. Similarly all even order derivatives of $c_{\text{eff};\rr}(L/2)$ matches at $y=\pm L/2$. Next we define the even corrector
\begin{align}
\mathcal{R}(y)=&\frac{G}{\sqrt{\nu}}\cosh(\frac{y }{\sqrt{\nu}}),\ \
\quad G = \frac{1}{2}e^{-\frac{L}{2\sqrt{\nu}}}\int_{ -\delta}^{ \delta}e^{-\frac{z}{\sqrt{\nu}}}\lan n\ran(z)dz\frac{1}{\sinh(L/(2\sqrt{\nu}))}.&
\end{align} Since $\pa_{y}^{2m}\mathcal{R}$ is even, we have $\pa_{y}^{2m}c_{\text{eff}}(L/2)=\pa_{y}^{2m}c_{\text{eff}}(-L/2)$ for $\forall m\in \mathbb{N}$. Next we observe that the choice of $\mathcal{R}$ guarantees that the derivative $\pa_y c_{\text{eff}}(\pm L/2)$ is zero. Similar arguments as above yields that  $\pa_{y}^{2m+1}c_{\text{eff}}(L/2)=\pa_{y}^{2m+1}c_{\text{eff}}(-L/2)=0, $  for all $m\in\mathbb N$. Hence $c_{\text{eff}}$ is indeed a solution to \eqref{c_eff_Torus}.
We can compute the derivative of the chemical density
%\begin{align}
%\pa_y c_{\text{eff}}(y)=& \frac{1}{2\nu} \int_{y}^{\infty} e^{\frac{1}{\sqrt{\nu}}(y-z)}\lan n\ran (z)dz-\frac{1}{2\nu}\int_{-\infty}^{y}e^{-\frac{1}{\sqrt{\nu}}(y-z)}\lan n\ran(z)dz+\frac{A}{\nu}\sinh(\frac{y-L/2}{\sqrt{\nu}}).%
%\end{align}F we have that the gradient is
for $y\in[-L/2,-\delta]$,
\begin{align}
\pa_y c_{\text{eff}}(y)=& \frac{1}{2\nu}e^{\frac{ y}{\sqrt{\nu}}} \int_{ -\delta}^{ \delta} e^{\frac{-z }{\sqrt{\nu}}}\lan n\ran (z)dz+\frac{1}{2\nu}\int_{ -\delta}^{ \delta}e^{-\frac{z}{\sqrt{\nu}}}\lan n\ran(z)dz\frac{e^{-\frac{L}{2\sqrt{\nu}}}}{\sinh(L/(2\sqrt{\nu}))} \sinh(\frac{y }{\sqrt{\nu}}) \\
=&  \frac{1}{2\nu}\int_{-\delta}^{\delta} e^{\frac{ -z}{\sqrt{\nu}}}\lan n\ran (z)dz  \left(  e^{\frac{y}{\sqrt{\nu}}}+\frac{e^{-\frac{L}{2\sqrt{\nu}}}\sinh(\frac{y }{\sqrt{\nu}})}{\sinh(\frac{L}{2\sqrt{\nu}})}\right)\\
=&\frac{1}{2\nu}\int_{ -\delta}^{ \delta} e^{\frac{ -z}{\sqrt{\nu}} }\lan n\ran (z)dz \frac{\sinh(\frac{y}{\sqrt{\nu}}+\frac{L}{2\sqrt\nu})}{\sinh(\frac{L}{2\sqrt{\nu}})}\geq 0.
\end{align}
By symmetry, we have that the gradient is negative for $y\in [\delta, L/2 ]$.

\noindent \textbf{Step \# 2: }Compare the two hitting times. If the starting point $y_0$ is in $[-\delta,\delta]$, then both hitting times are zero. Hence it is enough to consider $y_0\in L\Torus\backslash [-\delta,\delta]$. Thanks to the periodicity of the domain, we can focus on $y\in[\delta, L-\delta]$. Now we consider the following two elliptic equations:
\begin{align}
\frac{1}{2}\nu\pa_{yy}\mathcal{T}_{B}=-1, \quad \mathcal{T}_{B}(y=\delta, L -\delta)=0;\\
\frac{1}{2}\nu\pa_{yy}\mathcal{T}_{\text{eff}}+V_{\text{eff}}\pa_y \mathcal{T}_{\text{eff}}=-1, \quad \mathcal{T}_{\text{eff}}(y= \delta, L -\delta)=0.
\end{align} By Dynkin' s formula, $\mathbb{E}^{y_0}T_{1D}^\chi=\mathcal{T}_{\text{eff}}^\chi(y_0),\
  \mathbb{E}^{y_0}T_{1D}=\mathcal{T}_{\text{eff}}(y_0)$.
The expected first hitting time for the Brownian motion is direct $\mathcal{T}_{B}=-\frac{1}{\nu}(y-\delta)(y-(L-\delta))$. Now we have that
\begin{align}
\frac{1}{2}\nu\pa_{yy}(\mathcal{T}_B-\mathcal{T}_{\text{eff}})+V_{\text{eff}}\pa_{y}(\mathcal{T}_B-\mathcal{T}_{\text{eff}})=-V_{\text{eff}}\frac{1}{\nu}\pa_y\left((y-\delta)(y-L+\delta)\right)\leq 0.
\end{align}
By maximum principle, we have $ \mathbb{E}^{y_0}T_{1D}-\mathbb{E}^{y_0}T_{1D}^\chi=\mathcal{T}_{B}(y_0)-\mathcal{T}_{\text{eff}}(y_0)\geq (\mathcal{T}_{B}-\mathcal{T}_{\text{eff}})(y=\delta,L-\delta)=0$ for all $y_0\in[\delta,L-\delta]$.
\end{proof}}
\ifx
(Argument below are for Checking purpose only:) Note that in the searching strip, $Au_m=\min A|u(y)|\geq A^{4/5}u_d/2$.
 Similar to the previous argument, we  consider the events $F_i:=\{\tau^{in}\leq \frac{5L}{A ^{4/5}  u_d/2}\}\cup\{\tau^{in}\geq \frac{5L}{A^{4/5}u_d/2},\, d_\rr(X_{\frac{5L}{A^{4/5}u_d/2}},X_0)\leq L\}$, which are adjustments to definition \eqref{F_i}. Then the probability of $F_i$ can be estimated as follows. First of all,
\begin{align*}
\mathbb{P}\left(\tau^{in}\leq \frac{5L}{A ^{4/5}  u_d/2}\right)
 \leq&\mathbb{P}\left(|Y_{\tau^{in}}|=\bigg|\int_0^{\tau^{in}} V^{(2)} (X_t,Y_t)dt+\sqrt{\nu}\int_0^{\tau^{in}}dB_t\bigg|\geq A^{-1/3},\quad \tau^{in}\leq \frac{5L}{A ^{4/5}  u_d/2}\right)\nb\\
\leq&\mathbb{P}\left(\bigg|\int_0^{\tau^{in}} V^{(2)} (X_t, Y_t)dt\bigg|\geq \frac{1}{2A^{1/3}}\text{ for some }\tau^{in}\leq \frac{5L}{A ^{4/5}  u_d/2}\right)\nb\\
&+\mathbb{P}\left(\sqrt{\nu}|B_{\tau^{in}}|\geq\frac{1}{2A^{1/3}}\text{ for some  }\tau^{in}\leq \frac{5L}{A ^{4/5}  u_d/2}\right)=:P_1+P_2.
%$%\leq 2\mathbb{P}\left(\sqrt{\nu}B_{\frac{2L}{A^{4/5}  u_d/2}}\geq A^{-1/3}\right) \leq 2\frac{\mathbb{E}\left[\left(B_{\frac{2L}{A^{4/5} u_d/2}}\right)^{2}\right]}{\nu^{-1}A^{-2/3}} \leq \frac{8\nu L}{A^{2/15}u_d}.
%\end{align}
%\begin{align}
\end{align*}
The first term $P_1$ can be estimated using the fact that $||V||_\infty\leq C$ as follows:%_{\mathrm{eff}}_{\mathrm{eff}}
\begin{align*}
P_1\leq \mathbb{P}\left(||V||_\infty\frac{5L}{A ^{4/5}  u_d/2}\geq \frac{1}{2A^{1/3}}\right)=0;
\end{align*}
the probability is zero if $A$ is chosen large enough compared to $||V||_\infty,\, L,\, u_d$. %_{\mathrm{eff}}  has probability zero because the time is too short and t
The second term $ P_2$ can be estimated with reflection principle of Brownian motion as follows
\begin{align*}
P_2\leq\mathbb{P}\left(\min_{t}\{t|\sqrt{\nu}B_t=\frac{1}{2}A^{-1/3}\}\leq\frac{5L}{A ^{4/5}  u_d/2}\right)\leq 2\mathbb{P}\left(\sqrt{\nu}B_{\frac{5L}{A ^{4/5}  u_d/2}}\geq \frac{1}{2}A^{-1/3}\right)\leq \frac{80\nu L}{A^{2/15}u_d}.
\end{align*}
As $A\rightarrow \infty$, this approaches $0$.

Then,
\begin{align*}
\mathbb{P}\left(\tau^{in}\geq \frac{5L}{A^{4/5}u_d/2},\, d_\rr(X_{\frac{5L}{A^{4/5}u_d/2}},X_0)\leq L\right)\leq \mathbb{P}\left(\sqrt{\nu}B_{ \frac{5L}{A^{4/5}u_d/2}}\geq  L\right)\leq \frac{10\nu}{A^{4/5} u_dL}.%\tau^{in} MATH
\end{align*}
Hence,
\begin{align*}
\mathbb{P}(F_i)\leq\frac{80\nu L}{A^{2/15}u_d}+\frac{10\nu}{A^{4/5} u_dL}.
\end{align*}Now the same argument as above yields the convergence result
\begin{align*}
0\leq\mathbb{E}^{(x_0,y_0)}[T_{2D}^{A;\chi}]\leq &\mathbb{E}\tau_0+\mathbb{P}\left(F_1^c\right)\frac{5L}{A^{4/5} u_d/2}+\sum_{i=1}^\infty\left(\prod_{j=1}^{i}\mathbb{P}\left(F_j\right)\right)\left(\mathbb{E}(\tau^{out})+\mathbb{P}\left(F_{i+1}^c\right)\frac{5L}{A^{4/5} u_d/2}\right)\\
\leq &C(\nu, L, u_d,\chi,||\varphi||_{C^2},||u'||_\infty,||n||_{W^{2,\infty}})A^{-2/15}.
\end{align*}
As a result, $\lim_{A\rightarrow \infty}\mathbb{E}^{(x_0,y_0)}[T_{2D}^{A;\chi}]=0$.
\fi

\section{Supplementary Materials}
{The datasets supporting the conclusions of this article are included within the article and its additional files.
\begin{itemize}
    \item Additional File 1. Hitting Time Data (CSV 18kb)
    \item Additional File 2. Hitting Angles Data (CSV 188kb)
\end{itemize}}
{\bf Acknowledgement}. \rm
The authors acknowledge partial support of the NSF-DMS grants 1848790, 2006372 and 2006660.
 SH would like to thank Xiangying Huang and Yiyue Zhang for helpful suggestions, and Lihan Wang for pointing out the formula \eqref{formula} to him.
 AK has been partially supported by Simons Fellowship and thanks Andrej Zlatos for stimulating discussions.
We are all grateful to anonymous referees for detailed reports, constructive suggestions, and interesting questions.
%This project grew out of the undergraduate DoMath program that looked at the random search in shear (without chemotaxis)

\end{document}